\newcommand{\argmin}{\arg\min}
\newcommand{\cG}{\mathcal{G}}
\newcommand{\ve}{\varepsilon}
\theoremstyle{thmstyleone}%
\newtheorem{theorem}{Theorem}
\newtheorem{lemma}[theorem]{Lemma}%
\newtheorem{assumption}[theorem]{Assumption}%
\newtheorem{proposition}[theorem]{Proposition}%
\theoremstyle{thmstyletwo}%
\newtheorem{remark}{Remark}%
\theoremstyle{thmstylethree}%
\begin{document}

\title[A Stochastic Block-coordinate Proximal Newton Method]{A Stochastic Block-coordinate Proximal Newton Method for Nonconvex Composite Minimization}


\author[1]{\fnm{Hong} \sur{Zhu}}\email{zhuhongmath@126.com}

\author*[2]{\fnm{Xun} \sur{Qian}}\email{xunqian2099@163.com}

\affil[1]{\orgdiv{School of Mathematical Sciences}, \orgname{Jiangsu University}, \orgaddress{\street{Xuefu Road}, \city{Zhenjiang}, \postcode{2120123}, \state{Jiangsu}, \country{China}}}

\affil*[2]{\orgdiv{Center of Excellence for Generative AI}, \orgname{King Abdullah University of Science and Technology}, \orgaddress{\city{Thuwal}, \country{Saudi Arabia}}}


\abstract{This paper presents a stochastic block-coordinate proximal Newton method for minimizing the sum of a blockwise Lipschitz-continuously differentiable function and a separable nonsmooth convex function. At each iteration, the method randomly selects one block and approximately solves a strongly convex regularized quadratic subproblem built from a second-order local model of the smooth part of the objective function, with a backtracking line search to ensure monotonicity of the objective. Under mild sampling assumptions, we show that its convergence properties match those of the inexact proximal Newton method.
We further develop a line-search-free variant, where the strongly convex regularized quadratic subproblem is constructed using the Lipschitz constant of the gradient of the smooth component. For this variant, under a suitable parameter setting, we establish the global convergence rate of the residual mapping as well as the superlinear convergence rate of the iterates under the metric \(q\)-subregularity property with \(q > 1\) of the residual mapping for nonconvex composite problems. Under a suitable parameter setting, a more restrictive condition on the Hessian approximation, and the H\"olderian error bound condition (\(q\in(0, 1]\)) of the residual mapping, we also prove the local superlinear/quadratic convergence rate of both the residual mapping and the iterates for convex composite problems.
Finally, numerical experiments are conducted to demonstrate the effectiveness and convergence behavior of the proposed algorithm.}

\keywords{stochastic block-coordinate method, proximal Newton method, nonconvex composite optimization, H\"olderian error bound condition}


\pacs[MSC Classification]{49M15, 65K05, 90C06, 90C26 }

\maketitle

\section{Introduction}\label{intro}

In this paper, we propose a stochastic second-order method for addressing large-scale nonconvex and nonsmooth composite optimization problems
\begin{equation}\label{eq:ncp}
\min_{x\in\mathbb{R}^n}\varphi(x) : = f(x) + g(x),
\end{equation} 
where \(f\) is a twice continuously differentiable function and \(g\) is a convex, lower semicontinuous, and proper mapping.
Problems of the form~\eqref{eq:ncp} frequently occur in the fields of science, engineering, and machine learning~\cite{YHL12}. As the dimensionality of the problem increases, the computational cost associated with evaluating gradients and Hessian matrices can become prohibitively high. Consequently, block coordinate descent (BCD) methods~\cite{B1999,T01,TY09} and their variants have gained significant attention in the literature~\cite{W12,W15,RT16,TRG16}.

BCD methods iteratively select one coordinate block to significantly decrease the objective value while maintaining the other blocks fixed. A widely adopted technique for selecting such a block is by means of a cyclic strategy. Randomized selection strategies have been developed for BCD methods, proving particularly effective in addressing large-scale optimization problems that arise in machine learning applications~\cite{CHL08,SST09,SZ13}. 
The iteration complexity of randomized BCD methods for minimizing smooth convex functions has been studied in~\cite{N12,CHL08,LL10,SST09}, while the complexity for convex composite functions has been discussed in~\cite{RT14,LX15}.
For the nonconvex composite problems, the convergence properties of randomized BCD methods have been studied in~\cite{PN15,XY15,LX17}. All of the aforementioned methods are first-order methods, which indicates that only the gradient information of the smooth component of the objective function was used during each iteration.

Recently, second-order subspace methods have been proposed to utilize the local curvature information of the smooth component of the objective function for solving large-scale problems. These methods employ random subspace techniques to address high-dimensional  Hessian. For smooth convex optimization, Gower et al.~\cite{GKLP19} proposed a randomized subspace Newton method. For smooth nonconvex optimization, Fuji et al.~\cite{FPT24} proposed a randomized subspace variant of the regularized Newton method discussed by Ueda and Yamashita~\cite{UY10} and Zhao et al.~\cite{ZLD24} proposed a cubic regularized subspace Newton method.
The existing literature on randomized second-order methods for composite optimization is comparatively less extensive. Hanzely et al.~\cite{HDNR20} proposed a cubic regularization method to address convex composite optimization problems.
Both~\cite{HDNR20,ZLD24} require the exact solution of the cubic regularization subproblem at each iteration, which typically lacks a closed-form solution. 

The (inexact) proximal Newton methods (IPNM)~\cite{LSS12,LSS14,ST16,LW19,YZS19,KL21,L22,MYZZ22,LPWY23,Z24} have been studied to address Problem~\eqref{eq:ncp}. Given the current iterate \(x^k\), the fundamental approach of IPNM is to approximately  solve the subproblem 
  \begin{equation}\label{eq:smajf}
\min_x\{q_k(x) := l^k(x) + \frac{1}{2}\langle Q_k(x - x^k), x - x^k\rangle + \frac{\eta_k}{2}\|x - x^k\|^2 \},
\end{equation}
where \(l^k(x) := f(x^k) + \langle\nabla f(x^k), x - x^k\rangle + g(x)\), the symmetric positive semidefinite matrix \(Q_k\) is an approximation to \(\nabla^2f(x^k)\), and \(\eta_k > 0\) is the regularization parameter. 
Let \(\hat{x}^k\) be an approximate solution to Problem~\eqref{eq:smajf}. \(x^k\) will be updated along the direction \(\hat{x}^k - x^k\). 
The convergence rate of IPNM in terms of \(\min_k\{\|\mathcal{G}(x^k)\|\}\) is \(\mathcal{O}(1/\sqrt{k})\)~\cite{LW19,KL21,Z24}, where the KKT residual mapping \(\mathcal{G}(x) := x - {\rm prox}_g(x - \nabla f(x))\) with \({\rm prox}_g(u) := \argmin_x\{g(x) + \frac{1}{2}\|x - u\|^2\}\). Numerical experiments in~\cite{YZS19,MYZZ22} have demonstrated that IPNMs are highly effective for solving regularized logistic regression problems when \(n\) is large. The stochastic block-coordinate variants of IPNM have been studied for convex composite optimization problems~\cite{L17,FT18,LW20}. In~\cite{L17}, \(f\) was assumed to be self-concordant. The termination condition of the subproblem solver  proposed by~\cite{FT18} may be costly to verify, except for specific choices of the regularizer. 
A more practical termination criterion for the subproblem solver was introduced in~\cite{LW20}.  
The global convergence analysis in terms of the expected minimal squared norm of the  residual mapping for nonconvex composite optimization problems was studied in~\cite{LW20}. However, neither the convergence of the expected objective value nor the local convergence rate of the algorithm was addressed in their analysis.
In this paper, we introduce a stochastic block-coordinate proximal Newton method (SBCPNM) for solving Problem~\eqref{eq:ncp} and present a comprehensive convergence analysis. Throughout this paper, we assume that \(\varphi\) is lower bounded and denote \(x_*\) as any minimizer of \(\varphi\), with \(\varphi_* := \varphi(x_*)\) representing the corresponding optimal value. We establish the following assumption.
\begin{assumption}\label{assume:ncp}
\begin{itemize}\item[(i)] \(f: \mathbb{R}^n\to(-\infty, +\infty]\) is twice continuously differentiable and \(\nabla f\) is coordinatewise Lipschitz continuous with constants \(L_{S}\) for any index set \(S \subseteq [n] := \{1, \ldots, n\}\), that is
\[
\|\nabla f(x + h)_{S} - \nabla f(x)_{S}\|\leq L_{S}\|h\|, \quad \forall h\in R^n_{S},~\forall x\in\mathbb{R}^n,
\]
where \(\|\cdot\|\) denotes the Euclidean norm, \(\nabla f(x)_S\) denotes the subvector of \(\nabla f(x)\) indexed by the index set \(S\), and \(R^n_{S} := \{h\in\mathbb{R}^n~\vert~h_i = 0, \forall i\notin S\}\). 
\item[(ii)] \(g: \mathbb{R}^n\to(-\infty, +\infty]\) is block-coordinate separable, that is, \(g\) takes the form of 
\[
g(x) = \sum_{i=1}^{n_l}\psi_i(x_{I_i}),
\]
where \(\cup_{i=1}^{n_l}I_i = [n]\), \(I_i\cap I_j = \emptyset\), \(\forall i\neq j\), \(x_{I_i}\) denotes the subvector of \(x\) indexed by the index set \(I_i\), \(\psi_i: \mathbb{R}^{\vert I_i\vert}\to(-\infty, +\infty]\) is a proper closed convex function, \(\min_{z_i}\{\psi_i(z_i) + \frac{1}{2}\|z_i - u\|^2\}\) is efficiently solvable, and \(0\in {\rm dom}\psi_i\), \(i = 1, \ldots, n_l\). 
\item[(iii)] For any \(x^0\in {\rm dom}g\), the level set \(\mathcal{L}_{\varphi}(x^0) = \{x\vert \varphi(x) \leq \varphi(x^0)\}\) is bounded. 
\end{itemize}
\end{assumption}
Without loss of generality, we assume that the elements in the index set \(I_i\), \(i = 1, \ldots, n_l\), are arranged in ascending order. Block-coordinate separable functions are widely used in fields such as machine learning and signal processing. Typical examples include \(\|x\|_1\), \(\|x\|_2^2\), \(\sum_{i=1}^{n_l}\|x_{I_i}\|_2\), and box constraints.
From Assumption~\ref{assume:ncp} (i), we have 
\begin{equation}\label{eq:nfslc}
f(x + h) \leq f(x) + \langle \nabla f(x), h\rangle + \frac{L_S}{2}\|h\|^2, \quad \forall h\in R^n_S,~\forall x\in\mathbb{R}^n. 
\end{equation}
Define \(L_g := \max_{S\subseteq [n]}\{L_{S}\}\), \(\nabla f\) is \(L_g\)-Lipschitz continuous. Hence, \(\|\nabla^2f(x)\| \leq L_g\) over \(\mathcal{L}_{\varphi}(x^0)\). Moreover, there exist \(\bar{\epsilon}_0 >0\) and \(\bar{\epsilon}_1 > 0\), such that for every \(x\in  \mathcal{L}_{\varphi}(x^0)\), we have
\[
\varphi(x) \geq \varphi_*, \quad \|x\|\leq \bar{\epsilon}_0, \quad \|\nabla f(x)\| \leq \bar{\epsilon}_1.
\]

\noindent\textbf{Contribution.} SBCPNM can be regarded as a stochastic block-coordinate variant of the IPNM proposed by Zhu in~\cite{Z24}. The main convergence results are consistent with the corresponding ones for IPNM in the literature. Notably, the method is called ``proximal Newton"  (as in~\cite{Z24}), since it builds and solves a regularized quadratic subproblem using Hessian approximations of \(f\) at each iteration, irrespective of whether these approximations converge to the true Hessian or satisfy a Dennis-Mor\'{e}-type condition~\cite{DM74}. For appropriate choices of the function \(g\) and related parameters, SBCPNM exhibits similarities to several existing methods, and importantly, knowledge of blockwise Lipschitz constants is not required. i) We show that the sequence of expected objective values generated by SBCPNM converges to the expected limit of the objective values. ii) We study the convergence rate of the (expected) minimal squared norms of residual mappings under various sampling assumptions. We prove that, under a suitable sampling condition, any accumulation point of the sequence generated by SBCPNM is a stationary point of Problem~\eqref{eq:ncp}, and its main convergence results are consistent with those of IPNM. iii) We show that SBCPNM with a unit step size is well-defined when the Lipschitz constant \(L_{S_k}\) is used to formulate the regularized subproblem. Under an appropriate sampling assumption and parameter setting, we establish a local superlinear convergence rate of the residual mapping for nonconvex composite problems via the metric \(q\)-subregularity property with \(q > 1\), and a local superlinear/quadratic convergence rate of both the residual mapping and the iterates for convex composite problems under a more restrictive condition on the Hessian approximation and the H\"olderian error bound condition (\(q \in (0, 1]\)). Relative to the most relevant reference~\cite{LW20}, our study on the convergence of the expected objective values sequence, as well as the local convergence of the residual mapping and iterates for SBCPNM with unit step size, is new.

\noindent\textbf{Notation and facts.} \(\|\cdot\|\) denotes the Euclidean norm or its induced norm on matrices. Let \(S\subseteq [n]\) be sampled from an arbitrary but fixed distribution \(\mathcal{D}\), we use \(\vert S\vert\) to denote the cardinality of \(S\) and denote \(\overline{S} := [n]\backslash S\) as the complementary set of \(S\). For any \(x\in\mathbb{R}^n\) and \(A\in\mathbb{R}^{n\times n}\), denote \(x_{[S]}\in\mathbb{R}^n\) and \(A_{[S]}\in\mathbb{R}^{n\times n}\) by \((x_{[S]})_i = x_i\) if \(i\in S\) and \((x_{[S]})_i = 0\), otherwise; and \((A_{[S]})_{ij} = A_{ij}\) if \(i, j\in S\) and \((A_{[S]})_{ij} = 0\), otherwise. 
We denote \(A_S\in\mathbb{R}^{\vert S\vert\times \vert S\vert}\) as the  
principal submatrix of \(A\) induced by \(S\). For vector \(x\), \(\vert x\vert\) denotes the absolute value vector of \(x\), where \((\vert x\vert)_i = \vert x_i\vert\). 
For any symmetric matrix \(Q\), \(Q \succeq 0\) indicates that \(Q\) is a semidefinite positive matrix. We use \(\mathbb{E}[\cdot]\) and \(\mathbb{P}(\cdot)\) to denote expectation and probability, respectively. \(\mathbb{B}(x, r)\) denotes the open Euclidean norm ball centered at \(x\) with radius \(r > 0\).

For any local minimum \(\bar{x}\) of Problem~\eqref{eq:ncp}, we have \(0\in \nabla f(\bar{x}) + \partial g(\bar{x})\). Any vector \(\bar{x}\) satisfying this relation is called a stationary point for Problem~\eqref{eq:ncp}. 
Let \(\mathcal{S}^*\) be the set of stationary points of Problem~\eqref{eq:ncp}. It immediately follows from Assumption~\ref{assume:ncp} that \(\bar{x}\in \mathcal{S}^*\) if and only if \(\mathcal{G}(\bar{x}) = 0\). Define 
\[
\cG_{S}(y) = y - {\rm prox}_{\tilde{g}}(y - \nabla f(x)_{S}),
\] 
where \(S\subseteq [n]\) is a sampled index set, \(y \in\mathbb{R}^{\vert S\vert}\) is the subvector of \(x\) induced by \(S\), and \(\tilde{g}(y) = \sum_{\{i\vert I_i\subseteq S\}}\psi_i(x_{I_i})\). The following property holds. 

\begin{proposition}\label{prop:gsy}
Under Assumption~\ref{assume:ncp} (ii), we have
\begin{itemize}
\item[(i)]~\(\left({\rm prox}_g(x)\right)_j = ({\rm prox}_{\psi_i}(x_{I_i}))_{J_j}\), where \(j\in I_i\) is the \(J_j\)-th element in \(I_i\), \(j = 1, \ldots, n\);
\item[(ii)]~\(\cG_S(y) = \cG(x)_{S}\) with \(y = x_S\).
\end{itemize}
\end{proposition}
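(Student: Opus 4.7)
The plan is to prove part (i) first by directly decomposing the defining minimization of $\mathrm{prox}_g$, and then to use part (i) together with the definitions of $\mathcal{G}$ and $\mathcal{G}_S$ to verify part (ii) coordinate by coordinate. Both parts are essentially bookkeeping on top of the coordinate separability in Assumption~\ref{assume:ncp}(ii); there is no genuine analytic obstacle, only the need to be careful about the distinction between the full-dimensional index restriction $(\,\cdot\,)_{[S]}\in\mathbb{R}^n$ and the subvector restriction $(\,\cdot\,)_S\in\mathbb{R}^{|S|}$.

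For part (i), I would start from
\[
\mathrm{prox}_g(u)=\argmin_{x\in\mathbb{R}^n}\Bigl\{g(x)+\tfrac12\|x-u\|^2\Bigr\}
=\argmin_{x\in\mathbb{R}^n}\sum_{i=1}^n\Bigl\{\psi_i(x_i)+\tfrac12(x_i-u_i)^2\Bigr\},
\]
using $g(x)=\sum_i\psi_i(x_i)$ and $\|x-u\|^2=\sum_i(x_i-u_i)^2$. Because each summand depends only on a single coordinate $x_i$, and each $\psi_i$ is proper closed convex so that the one-dimensional prox is well defined and single-valued, the joint minimization separates into $n$ independent scalar minimizations, giving
\[
\bigl(\mathrm{prox}_g(u)\bigr)_i=\argmin_{z\in\mathbb{R}}\Bigl\{\psi_i(z)+\tfrac12(z-u_i)^2\Bigr\}=\mathrm{prox}_{\psi_i}(u_i).
\]

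For part (ii), observe that $\tilde g$ is itself a coordinate-separable sum of the $\psi_i$ for $i\in S$, so applying part (i) to $\tilde g$ yields $(\mathrm{prox}_{\tilde g}(v))_i=\mathrm{prox}_{\psi_i}(v_i)$ for every $i\in S$ and every $v\in\mathbb{R}^{|S|}$. Plugging $v=y-\nabla f(x)_S$ with $y=x_S$ into the definition of $\mathcal{G}_S$, the $i$-th entry becomes $x_i-\mathrm{prox}_{\psi_i}\bigl(x_i-\nabla f(x)_i\bigr)$ for $i\in S$. On the other hand, the $i$-th entry of $\mathcal{G}(x)_S$ is
\[
x_i-\bigl(\mathrm{prox}_g(x-\nabla f(x))\bigr)_i=x_i-\mathrm{prox}_{\psi_i}\bigl(x_i-\nabla f(x)_i\bigr),\qquad i\in S,
\]
by part (i). The two expressions match componentwise, which establishes $\mathcal{G}_S(y)=\mathcal{G}(x)_S$ and completes the proof.
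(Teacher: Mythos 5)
Your proof is correct and follows essentially the same route as the paper: the paper cites \cite[Theorem 6.6]{B17} for part (i), whose content is exactly the coordinatewise separation of the prox minimization that you write out explicitly, and part (ii) is then the same componentwise verification from the definitions of \(\cG\) and \(\cG_S\). The only difference is that you supply the one-line separability argument in place of the citation, which is fine.
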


Statement (i) follows from~\cite[Theorem 6.6]{B17}. Statement (ii) follows from statement (i) and definitions of \(\cG(x)\) and \(\cG_{S}(y)\). 

\noindent\textbf{Organization.} The rest of the paper is organized as follows. In Section~\ref{sec:pnmethods}, we present SBCPNM and provide a detailed global convergence analysis. In Section~\ref{appendix: lhisgiven}, we discuss a line-search-free variant of SBCPNM (with \(L_{S_k}\) used to design subproblmes) and establish its global convergence rate for nonconvex composite problems, along with its local convergence rate for nonconvex/convex composite problems under particular parameter settings and different error bound conditions. In Section~\ref{sec:numerical}, we conduct numerical experiments on the \(\ell_1\)-regularized Student's \(t\)-regression, nonconvex binary classification, and biweight loss with group regularization. We make some conclusions in Section~\ref{sec:conclusions}.

\section{The Stochastic Block-coordinate Proximal Newton Method}\label{sec:pnmethods}

In this section, we present SBCPNM for Problem~\eqref{eq:ncp}. Knowledge of coordinatewise Lipschitz constants is not assumed.  

\subsection{The stochastic block-coordinate  proximal Newton method}\label{sec:pmm}

Given the current iterate \(x^k\), pick \(S_k\subseteq [n]\) from an arbitrary but fixed distribution \(\mathcal{D}\). We approximately solve the following problem:
\[
\min_{y\in\mathbb{R}^{\vert S_k\vert}}\{q_{S_k}^k(y) := l^k_{S_k}(y)  + \frac{1}{2}\langle (Q_k)_{S_k}(y - y^k), y - y^k\rangle + \frac{\eta_k}{2}\|y - y^k\|^2 \},
\]
where \(l^k_{S_k}(y) := f(x^k) + \langle \nabla f(x_k)_{S_k}, y - y^k\rangle \!+\! \tilde{g}_k(y)\), \(y^k = x^k_{S_k}\), \(\tilde{g}_k(y) \!= \!\sum_{\{i\vert I_i \subseteq S_k\}}\!\psi_i(y_i)\), and \(y_i\in\mathbb{R}^{\vert I_i\vert}\). Let \(\hat{y}^k\) be an approximate solution of the above problem. We then update \(x^k\) by setting \(x^{k+1}_{S_k} = x^k_{S_k} + \alpha_k(\hat{y}^k - y^k)\) and \(x^{k+1}_{\overline{S_k}} = x^k_{\overline{S_k}}\), where \(\alpha_k > 0\) is the step size. 

We use the following criterion for the approximate solution \(\hat{y}^k\): there exists \(\varsigma_k\in \partial q^k_{S_k}(\hat{y}^k) := \nabla f(x_k)_{S_k} + (Q_k)_{S_k}(\hat{y}^{k} - y^k) + \eta_k(\hat{y}^k - y^k) +\partial \tilde{g}_k(\hat{y}^k)\), such that   
\begin{equation}\label{eq:errvek}
\|\varsigma_k\| \leq \frac{\mu}{2}\|\hat{y}^k - y^k\|
\end{equation}
for some \(\mu > 0\). 
Notice that \(\mu = 0\) corresponds to the case in which the subproblems are solved exactly. By applying the first-order optimality condition at a point \(y\) when a proximal-type method is employed to minimize \(q_{S_k}^k(y)\), an element of the \(\partial q^k_{S_k}(\hat{y}^k)\) can be readily obtained. Accuracy criterion~\eqref{eq:errvek} can be satisfied by the proximal gradient method~\cite{B17} and the FIAST method~\cite{B17} when \(\|\mathcal{G}(x^k)_{S_k}\| \neq 0\). Further discussions on solvers satisfying~\eqref{eq:errvek} can be found in~\cite{Z24}. 
In addition, \(\hat{y}^k\) can be stated as an exact solution of the problem
\begin{equation}\label{eq:xsub}
\hat{y}^k = \arg\min_y\{q_{S_k}^k(y) + \langle \hat{\varepsilon}_k, y - y^k\rangle\}
\end{equation}
for some \(\|\hat{\varepsilon}_k\| \leq \frac{\mu}{2}\|\hat{y}^k - y^k\|\) since the first-order optimality condition of Problem~\eqref{eq:xsub} yields \(0 \in \partial q^k_S(\hat{y}^{k}) + \hat{\varepsilon}_k\), which implies \(-\hat{\varepsilon}_k \in \partial q^k_S(\hat{y}^{k})\). Equation~\eqref{eq:xsub} plays a crucial role in the subsequent theoretical analysis. We summarize SBCPNM in Algorithm~\ref{alg:pnewton}.

\begin{algorithm*}[th!]
\caption{Stochastic block-coordinate proximal Newton (SBCPN) method with backtracking line search.}\label{alg:pnewton}
\begin{algorithmic}[1]
\Require{\(x^0\in {\rm dom}g\), \(\bar{\eta} > 0\), \(\mu\in(0, 1)\), \(\tau\in(0, \mu)\), and \(\theta\in(0, 1)\), distribution \(\mathcal{D}\) of random index set.}
\For{\(k = 0, 1, \ldots, \)}
\State{sample \(S_k\) from \(\cal{D}\);}
\State{set \(\eta_k\in(0, \bar{\eta}]\) and \(Q_k\) satisfy \((Q_k)_{S_k} + (\eta_k - \mu)I_{\vert S_k\vert} \succeq 0\);}
\State{set \(y^k = x^k_{S_k}\), compute}
\[
\hat{y}^k \approx \argmin_{y}\{q^k_{S_k}(y)\}\quad {\rm with}~\varsigma_k\in \partial q^k_{S_k}(\hat{y}^k)~{\rm satisfies}~\eqref{eq:errvek}.
\] 
\State{define \(\hat{x}^k\) with \(\hat{x}_{S_k}^k = \hat{y}^k\) and \(\hat{x}_{\overline{S_k}}^k = x_{\overline{S_k}}^k\);}
\State{set \(d_k = \hat{x}^k - x^k\), \(x^{k+1} = x^k + \alpha_kd_k\), where \(\alpha_k = \theta^{j_k}\) and \(j_k\) is the smallest nonnegative integer such that  }
\begin{equation}\label{eq:ls}
\varphi(x^k + \theta^{j_k}d_k) \leq \varphi(x^k) - \frac{\tau}{2}\theta^{j_k}\|d_k\|^2.
\end{equation}
\EndFor\\
\Return{\(\{x^k\}\)}
\end{algorithmic}
\end{algorithm*}
\begin{remark}\label{rem:prob}
The main differences between Algorithm~\ref{alg:pnewton} and the inexact variable metric block-coordinate descent method proposed in~\cite{LW20} are the termination condition of the subproblem and the line search condition. In the latter method, the approximate solution \(\hat{y}^k\) for each \(k\) satisfies
\[
q_{S_k}^k(\hat{y}^k) - q_{S_k}^{k,*} \leq -\eta_{vm}[q_{S_k}^{k,*} - f(x^k) - \tilde{g}_k(y^k)],
\]
where \(q_{S_k}^{k,*} = \inf_yq_{S_k}^{k}(y)\) and \(\eta_{vm}\in (0, 1)\). Adaptive choices of \(\eta_{vm}\) are allowed and \(q_{S_k}^{k,*}\) is not required in calculating. 
\(x^{k+1}\) is updated by \(x^k + \alpha_k d_k\), where \(\alpha_k\) satisfies 
\[
\varphi(x^k + \alpha d_k) \leq \varphi(x^k) + \alpha_k \gamma_{vm}(\nabla f(x^k)^\top d_k + \tilde{g}_k(\hat{y}^k) - \tilde{g}_k(y^k))
\]
for some \(\gamma_{vm} \in (0, 1)\). 
\end{remark}

\begin{remark}
Algorithm~\ref{alg:pnewton} reduces to IPNM proposed in~\cite{Z24} when \(S_k \equiv [n]\), \(\forall k\). 
With specific choices of \(g\) and parameters, Algorithm~\ref{alg:pnewton} reduces to several existing methods.
\begin{enumerate}
\item If \((Q_k)_{S_k} \equiv 0\), \(\hat{\ve}_k \equiv 0\) (\(\mu = 0\)), and \(\eta_k = L_{S_k}\), \(\forall k\in\mathbb{N}\), where \(L_{S_k}\) denotes the Lipschitz constant of \(\nabla f(x)_{S_k}\), then in Algorithm~\ref{alg:pnewton}, 
\[
\hat{y}^k = {\rm prox}_{\frac{1}{L_{S_k}}\tilde{g}_k}(y^k - \frac{1}{L_{S_k}}\nabla f(x^k)_{S_k});\quad (d_k)_{S_k} = \hat{y}^k - y^k; \quad (d_k)_{\overline{S_k}} = 0. 
\]

In this case, \(\alpha_k = 1\) satisfies the line search condition~\eqref{eq:ls} by noting that 
\begin{align*}
\varphi(x^k + d_k) =& f(x^k + d_k) + g(x^k + d_k) =  f(x^k + d_k) + \tilde{g}_k(\hat{y}^k) + \sum_{i\in\overline{S_k}}\psi_i(x^k_i)\\
\leq& f(x^k) + \langle\nabla f(x^k), d_k\rangle + \frac{L_{S_k}}{2}\|d_k\|^2 + \tilde{g}_k(\hat{y}^k) + \sum_{i\in\overline{S_k}}\psi_i(x^k_i)\\
\leq& f(x^k) + \langle\nabla f(x^k), d_k\rangle \!+\! \frac{L_{S_k}}{2}\|d_k\|^2 \!+\! \tilde{g}_k(y^k) \!-\! \langle \hat{y}^k \!-\! y^k, \nabla f(x_k)_{S_k}\rangle \\
&- L_{S_k}\|\hat{y}^k - y^k\|^2 + \sum_{i\in\overline{S_k}}\psi_i(x^k_i) = \varphi(x^k) - \frac{L_{S_k}}{2}\|d_k\|^2,
\end{align*}
where the first inequality follows from~\eqref{eq:nfslc} and the second inequality follows from the optimality condition of problem with respect to \(\hat{y}^k\) and the convexity of \(\tilde{g}_k(y)\) (see (3.12) in~\cite{BST14}). Hence, we have 
\[
\varphi(x^k + d_k) \leq \varphi(x^k) - \frac{\tau}{2}\|d_k\|^2 \quad {\rm with}\quad \tau \leq L_{S_k}. 
\] 

The above iterate can be viewed as the randomized block-coordinate descent method~\cite{RT14} if we further assume \(f\) to be convex. In addition, 
\begin{enumerate}
\item if \(g \equiv 0\) and we further assume \(f\) to be convex, then the above iterate can be viewed as the coordinate descent method~\cite{N12}.
\item if \(g = \delta_{C_1\times\ldots\times C_m}(x) = \left\{\begin{array}{ll}
0 & {\rm if}~x_{(i)}\in C_i,~\forall i,\\
+\infty & {\rm otherwise}
\end{array}
\right.\), where \(C_1, \ldots, C_m\) are convex sets, \(x_{(i)}\) denotes the \(i\)-th block of \(x\), and we further assume \(f\) to be convex, then the above iterate can be viewed as the constrained coordinate descent method~\cite{N12}. 
\end{enumerate}

\item If \(g \equiv 0\) and \(\hat{\ve}_k\equiv 0\) (\(\mu = 0\)), then in  Algorithm~\ref{alg:pnewton},
\[
\left\{
\begin{array}{l}
\hat{y}^k = y^k - ((Q_k)_{S_k} + \eta_kI)^{-1}\nabla f(x^k)_{S_k};\\
(d_k)_{S_k} = \hat{y}^k - y^k; \quad (d_k)_{\overline{S_k}} = 0. 
\end{array}
\right.
\]
\begin{enumerate}
\item If we further assume \(f\) to be \(\hat{\mu}\)-strongly convex, it can be proved that \(\alpha_k \equiv \frac{\hat{\mu}}{L_g}\) satisfies~\eqref{eq:ls} if \((Q_k)_{S_k} + (\eta_k - \frac{\tau + L_{S_k}\hat{\mu}^2}{2L_g\hat{\mu}})I_{\vert S_k\vert} \succeq 0\). When \(\eta_k \equiv 0\) and \(Q_k \equiv \nabla^2f(x^k)\), the above iterate can be viewed as a special case of the randomized subspace Newton method~\cite{GKLP19}. 
\item If \((Q_k)_{S_k} \!=\! (\nabla^2f(x^k))_{S_k} \!+\! c_1\max\{0, \!-\lambda_{\min}(\!(\!\nabla^2f(x^k)\!)_{S_k})\}I \!+\! c_2\|\nabla f(x_k)\|^{\delta}I\) for some \(c_1 > 1\), \(c_2 > 0\), and \(\delta \geq 0\), then Algorithm~\ref{alg:pnewton} is similar to the randomized subspace regularized Newton method~\cite{FPT24} except that the line search conditions are different. 
\end{enumerate}
\end{enumerate}
\end{remark}

\subsection{Properties of Algorithm~\ref{alg:pnewton}}

In this subsection, we focus on a particular iteration \(k\).  We first show that the line search condition~\eqref{eq:ls} is well defined. 

\begin{lemma}\label{lemma:dqkng}
Suppose for \(k\in\mathbb{N}\), \((Q_k)_{S_k} + (\eta_k - \mu) I_{\vert S_k\vert} \succeq 0\).  
Let \(x^k\) and \(\hat{x}^k\) be points generated by Algorithm~\ref{alg:pnewton}. We have 
\[
q_k(\hat{x}^k) \leq \varphi(x^k). 
\]
\end{lemma}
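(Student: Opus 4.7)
The plan is to reduce everything to the block subproblem $q^k_{S_k}$ and then use strong convexity together with the inexactness criterion \eqref{eq:errvek}.

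First I would exploit that $\hat{x}^k$ and $x^k$ agree on $\overline{S_k}$, so the direction $\hat{x}^k - x^k$ lies in $R^n_{S_k}$. Plugging $\hat{x}^k$ into the definition of $q_k$ collapses the linear, quadratic and regularization terms to their $S_k$-restrictions, and using the separability from Assumption~\ref{assume:ncp}(ii) I would split $g(\hat{x}^k) = g_k(\hat{y}^k) + \sum_{i\in\overline{S_k}}\psi_i(x^k_i)$. This identifies
\[
q_k(\hat{x}^k) = q^k_{S_k}(\hat{y}^k) + \sum_{i\in\overline{S_k}}\psi_i(x^k_i).
\]
Symmetrically, since the quadratic/regularization parts of $q^k_{S_k}$ vanish at $y^k$, I would observe
\[
\varphi(x^k) = f(x^k) + g_k(y^k) + \sum_{i\in\overline{S_k}}\psi_i(x^k_i) = q^k_{S_k}(y^k) + \sum_{i\in\overline{S_k}}\psi_i(x^k_i).
\]
Thus the claim reduces to the cleaner inequality $q^k_{S_k}(\hat{y}^k) \leq q^k_{S_k}(y^k)$.

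Next I would use the hypothesis $(Q_k)_{S_k} + (\eta_k - \mu)I_{|S_k|} \succeq 0$, which says that the smooth quadratic part of $q^k_{S_k}$ has Hessian at least $\mu I$; together with convexity of $g_k$ this makes $q^k_{S_k}$ $\mu$-strongly convex. The inexactness criterion \eqref{eq:errvek} gives a subgradient $\varsigma_k \in \partial q^k_{S_k}(\hat{y}^k)$ with $\|\varsigma_k\| \leq \tfrac{\mu}{2}\|\hat{y}^k - y^k\|$. Applying the standard strong-convexity lower bound at $y^k$ around $\hat{y}^k$,
\[
q^k_{S_k}(y^k) \geq q^k_{S_k}(\hat{y}^k) + \langle \varsigma_k, y^k - \hat{y}^k\rangle + \tfrac{\mu}{2}\|y^k - \hat{y}^k\|^2,
\]
and bounding the inner product by Cauchy--Schwarz with $\|\varsigma_k\|\leq \tfrac{\mu}{2}\|\hat{y}^k - y^k\|$, the first-order term is at least $-\tfrac{\mu}{2}\|\hat{y}^k - y^k\|^2$, which is exactly cancelled by the strong-convexity term. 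This yields $q^k_{S_k}(y^k) \geq q^k_{S_k}(\hat{y}^k)$, and adding back $\sum_{i\in\overline{S_k}}\psi_i(x^k_i)$ to both sides gives the lemma.

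There is no real obstacle here — the only mildly delicate step is being careful that strong convexity of $q^k_{S_k}$ comes from $(Q_k)_{S_k} + \eta_k I \succeq \mu I$ (which is equivalent to the stated assumption) rather than from $Q_k$ alone, and that the tolerance $\tfrac{\mu}{2}$ in \eqref{eq:errvek} is perfectly tuned so that Cauchy--Schwarz exactly absorbs the strong-convexity quadratic. Everything else is just unpacking the block/separability structure.
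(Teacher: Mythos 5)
Your proposal is correct and follows essentially the same route as the paper's proof: reduce to the block subproblem via the separability identities, then use $\mu$-strong convexity of $q^k_{S_k}$ together with the subgradient bound from \eqref{eq:errvek} so that Cauchy--Schwarz exactly cancels the strong-convexity quadratic. The only cosmetic difference is that you work directly with $\varsigma_k$ while the paper routes through $-\hat{\varepsilon}_k$ from \eqref{eq:xsub}, which is the same object.
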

\begin{proof}
Notice that \(q^k_{S_k}\) is a \(\mu\)-strongly convex function since \((Q_k)_{S_k} + (\eta_k - \mu) I_{\vert S_k\vert} \succeq 0\). 
For any \(y\) and \(u^k\in \partial q^k_{S_k}(\hat{y}^k)\), it holds that 
\[
q^k_{S_k}(y) \geq q^k_{S_k}(\hat{y}^k) + \langle u^k, y - \hat{y}^k\rangle + \frac{\mu}{2}\|y - \hat{y}^k\|^2. 
\] 
According to the optimality condition of Problem~\eqref{eq:xsub}, we have \(0\in \partial q^k_{S_k}(\hat{y}^k) + \hat{\ve}_k\). 
Hence, by setting \(y := y^k\) and \(u^k := -\hat{\ve}_k\), we have 
\begin{align*}
q^k_{S_k}(y^k) \geq& q^k_{S_k}(\hat{y}^k) - \langle\hat{\ve}_k, y^k - \hat{y}^k\rangle + \frac{\mu}{2}\|y^k - \hat{y}^k\|^2\\
\geq& q^k_{S_k}(\hat{y}^k) - \|\hat{\ve}_k\|\|y^k - \hat{y}^k\|+ \frac{\mu}{2}\|y - \hat{y}^k\|^2\geq q^k_{S_k}(\hat{y}^k). 
\end{align*}
Notice that by the definition of \(\hat{x}^k\) and \(q_{S_k}^k\), we have \(q^k_{S_k}(y^k) = f(x^k) + \tilde{g}_k(y^k) = \varphi(x^k) - \sum_{i\notin S_k}\psi_i(x_i^k) \geq q^k_{S_k}(\hat{y}^k)\), which yields
\[
\varphi(x^k) \geq q^k_{S_k}(\hat{y}^k) + \sum_{i\notin S_k}\psi_i(x_i^k).
\]
Notice that  
\begin{align}\label{eq:qskyqkx}
q^k_{S_k}(\hat{y}^k) =&  l_{S_k}^k(\hat{y}^k)+ \frac{1}{2}\langle (Q_k)_{S_k}(\hat{y}^k \!-\! y^k), \hat{y}^k \!-\! y^k\rangle \!+\! \frac{\eta_k}{2}\|\hat{y}^k \!-\! y^k\|^2 \! =q_k(\hat{x}^k)  - \sum_{i\notin S_k}\psi_i(x_i^k). 
\end{align}
Therefore, we have \(\varphi(x^k) \geq q_k(\hat{x}^k)\). The statement holds. 
\end{proof}

\begin{lemma}\label{lem:alpdphi}
Suppose for \(k\in\mathbb{N}\), \((Q_k)_{S_k} + (\eta_k - \mu) I_{\vert S_k\vert} \succeq 0\) and Assumption~\ref{assume:ncp} hold. 
Let \(\alpha_k\) be chosen by the backtracking line search~\eqref{eq:ls} in Algorithm~\ref{alg:pnewton} at iteration \(k\). Then we have the step size estimate  
\[
\alpha_k \geq \min\{1, \frac{\theta(\mu -\tau)}{L_{S_k}}\}
\]
with the cost function decrease satisfying
\begin{equation}\label{eq:dvarphi}
\varphi(x^{k+1}) - \varphi(x^k) \leq -\frac{\tau}{2}\min\{1, \frac{\theta(\mu -\tau)}{L_{S_k}}\}\|d_k\|^2.
\end{equation}
\end{lemma}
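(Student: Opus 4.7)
The plan is to establish a quadratic-in-$\alpha$ upper bound on $\varphi(x^k+\alpha d_k)-\varphi(x^k)$ valid for every $\alpha\in[0,1]$, read off from it the range of $\alpha$ that satisfies the Armijo-type condition~\eqref{eq:ls}, and then convert this range into a lower bound on the backtracking step size $\alpha_k=\theta^{j_k}$. The decrease estimate~\eqref{eq:dvarphi} is then an immediate substitution.

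For the upper bound on $\varphi(x^k+\alpha d_k)$ I would split $\varphi=f+g$. Since $d_k$ is supported on $S_k$, the coordinatewise descent inequality~\eqref{eq:nfslc} gives
\[
f(x^k+\alpha d_k)\leq f(x^k)+\alpha\nabla f(x^k)^\top d_k+\tfrac{\alpha^2 L_{S_k}}{2}\|d_k\|^2.
\]
Using the coordinate separability of $g$ together with convexity of each $\psi_i$ on the $S_k$-block (the complementary coordinates of $x^k+\alpha d_k$, $x^k$, $\hat{x}^k$ coincide), one obtains the convex-combination bound $g(x^k+\alpha d_k)\leq(1-\alpha)g(x^k)+\alpha g(\hat{x}^k)$. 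Adding the two yields
\[
\varphi(x^k+\alpha d_k)-\varphi(x^k)\leq \alpha\bigl[\nabla f(x^k)^\top d_k+g(\hat{x}^k)-g(x^k)\bigr]+\tfrac{\alpha^2 L_{S_k}}{2}\|d_k\|^2.
\]

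The heart of the argument is to show that the bracketed quantity is at most $-\tfrac{\mu}{2}\|d_k\|^2$. The inexact optimality condition following~\eqref{eq:xsub} gives
\[
-\hat{\varepsilon}_k-\nabla f(x^k)_{S_k}-\bigl[(Q_k)_{S_k}+\eta_k I_{|S_k|}\bigr](\hat{y}^k-y^k)\in\partial g_k(\hat{y}^k),
\]
so convexity of $g_k$, combined with the identities $g(\hat{x}^k)-g(x^k)=g_k(\hat{y}^k)-g_k(y^k)$ and $\nabla f(x^k)^\top d_k=\langle\nabla f(x^k)_{S_k},\hat{y}^k-y^k\rangle$, produces
\[
\nabla f(x^k)^\top d_k+g(\hat{x}^k)-g(x^k)\leq -\bigl\langle [(Q_k)_{S_k}+\eta_k I](\hat{y}^k-y^k),\hat{y}^k-y^k\bigr\rangle-\langle\hat{\varepsilon}_k,\hat{y}^k-y^k\rangle.
\]
The hypothesis $(Q_k)_{S_k}+(\eta_k-\mu)I_{|S_k|}\succeq 0$ lower-bounds the quadratic term by $\mu\|d_k\|^2$, while the accuracy criterion~\eqref{eq:errvek} with Cauchy-Schwarz controls the cross term by $\tfrac{\mu}{2}\|d_k\|^2$, giving the target bound $-\tfrac{\mu}{2}\|d_k\|^2$.

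Putting the pieces together yields $\varphi(x^k+\alpha d_k)-\varphi(x^k)\leq-\tfrac{\alpha}{2}(\mu-\alpha L_{S_k})\|d_k\|^2$, so~\eqref{eq:ls} is implied whenever $\mu-\alpha L_{S_k}\geq \tau$, i.e.\ for every $\alpha\leq(\mu-\tau)/L_{S_k}$. Consequently every $\alpha\in[0,\min\{1,(\mu-\tau)/L_{S_k}\}]$ is admissible. The minimal nonnegative integer $j_k$ produced by the backtracking either satisfies $j_k=0$ (so $\alpha_k=1$), or else $\theta^{j_k-1}$ is inadmissible, which by the above sufficient condition forces $\theta^{j_k-1}>(\mu-\tau)/L_{S_k}$ and hence $\alpha_k>\theta(\mu-\tau)/L_{S_k}$. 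In either case $\alpha_k\geq\min\{1,\theta(\mu-\tau)/L_{S_k}\}$, and plugging this lower bound into~\eqref{eq:ls} gives~\eqref{eq:dvarphi}. The main obstacle is the third paragraph: orchestrating the approximate subgradient inclusion, the semidefiniteness of $(Q_k)_{S_k}+(\eta_k-\mu)I_{|S_k|}$, and the residual bound on $\hat{\varepsilon}_k$ so that the stray quadratic and linear terms cancel exactly to $-\mu/2$; everything else is bookkeeping.
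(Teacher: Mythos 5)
Your proposal is correct and follows essentially the same route as the paper: your bracketed quantity $\nabla f(x^k)^\top d_k+g(\hat{x}^k)-g(x^k)$ is exactly $l^k_{S_k}(\hat{y}^k)-l^k_{S_k}(y^k)$, and your bound of $-\tfrac{\mu}{2}\|d_k\|^2$ on it is the paper's inequality~\eqref{eq:dl}, which the paper obtains via the $\mu$-strong convexity of $q^k_{S_k}$ (Lemma~\ref{lemma:dqkng}) while you obtain it directly from the inexact subgradient inclusion together with the semidefiniteness hypothesis and~\eqref{eq:errvek}. The remaining steps --- the descent lemma for $f$ on the block, convexity of $g$ along the segment, and the backtracking argument converting the admissible range $\alpha\leq(\mu-\tau)/L_{S_k}$ into $\alpha_k\geq\min\{1,\theta(\mu-\tau)/L_{S_k}\}$ --- coincide with the paper's proof.
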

\begin{proof}
From~\eqref{eq:qskyqkx}, we have 
\begin{align*}
q_k(\hat{x}^k) =& q^k_{S_k}(\hat{y}^k) + \sum_{i\notin S_k}\psi_i(x_i^k)\\
=&l^k_{S_k}(\hat{y}^k) + \frac{1}{2}\langle(Q_k)_{S_k}(\hat{y}^k - y^k), \hat{y}^k - y^k\rangle + \frac{\eta_k}{2}\|\hat{y}^k - y^k\|^2 + \sum_{i\notin S_k}\psi_i(x_i^k).
\end{align*}
Notice that \(\varphi(x^k) = f(x^k) + \tilde{g}_k(y^k) + \sum_{i\notin S_k}\psi_i(x_i^k) = l^k_{S_k}(y^k) + \sum_{i\notin S_k}\psi_i(x_i^k)\). It follows from Lemma~\ref{lemma:dqkng} that
\begin{align*}
0 \geq q_k(\hat{x}^k) - \varphi(x^k) =& l^k_{S_k}(\hat{y}^k) + \frac{1}{2}\langle(Q_k)_{S_k}(\hat{y}^k - y^k), \hat{y}^k - y^k\rangle + \frac{\eta_k}{2}\|\hat{y}^k - y^k\|^2 - l^k_{S_k}(y^k) \nonumber \\
\geq & l^k_{S_k}(\hat{y}^k) - l^k_{S_k}(y^k) + \frac{\mu}{2}\|d_k\|^2, 
\end{align*}
where the last inequality holds since \((Q_k)_{S_k} + (\eta_k - \mu) I_{\vert S_k\vert} \succeq 0\). Therefore, we have 
\begin{equation}\label{eq:dl}
l^k_{S_k}(y^k) - l^k_{S_k}(\hat{y}^k) \geq \frac{\mu}{2}\|d_k\|^2.
\end{equation} 
Notice that for any \(t\in[0, 1]\), 
\begin{align*}
&\varphi(x^k) - \varphi(x^k + td_k)\\
=& l^k_{S_k}(y^k) + \sum_{i\in\overline{S_k}}\psi_i(x_i^k) - f(x^k + td_k) - g(x^k + td_k)\\
=& l^k_{S_k}(y^k)  \!-\! l^k_{S_k}(y^k + t(\hat{y}^k - y^k)) \!-\! (f(x^k + td_k) - f(x^k) \!-\! t\langle \nabla f(x^k)_{S_k}, \hat{y}^k \!-\! y^k\rangle)\\
=& l^k_{S_k}(y^k)  \!-\! l^k_{S_k}(y^k + t(\hat{y}^k - y^k)) \!-\! (f(x^k + td_k) \!-\! f(x^k) \!-\! t\langle\nabla f(x^k), d_k\rangle)\\
\geq & l^k_{S_k}(y^k)  - l^k_{S_k}(y^k + t(\hat{y}^k - y^k)) - \frac{L_{S_k}}{2}t^2\|d_k\|^2,
\end{align*}
where the third equality holds since \((d_k)_{S_k} = \hat{y}^k - y^k\) and \((d_k)_{\overline{S_k}} = 0\), the last inequality holds since \(\nabla f(\cdot)_{S_k}\) is \(L_{S_k}\)-Lipschitz continuous. 
Therefore, 
\begin{align*}
\varphi(x^k) \!-\! \varphi(x^k \!+\! td_k) \!-\! \frac{\tau }{2}t\|d_k\|^2\geq& l^k_{S_k}(y^k)  \!-\! l^k_{S_k}(y^k \!+\! t(\hat{y}^k \!-\! y^k)) \!-\! \frac{L_{S_k}}{2}t^2\|d_k\|^2  \!-\! \frac{\tau }{2}t\|d_k\|^2\\
\geq& t(l^k_{S_k}(y^k) - l^k_{S_k}(\hat{y}^k)) - \frac{L_{S_k}}{2}t^2\|d_k\|^2  - \frac{\tau }{2}t\|d_k\|^2 \\
\geq& \frac{\mu}{2}t\|d_k\|^2 - \frac{L_{S_k}}{2}t^2\|d_k\|^2  - \frac{\tau }{2}t\|d_k\|^2 \\
=&\frac{1}{2}((\mu - \tau) - L_{S_k}t) t\|d_k\|^2,
\end{align*}
where the second inequality holds since \(l_{S_k}^k\) is convex and the last inequality follows from~\eqref{eq:dl}. Hence,~\eqref{eq:ls} holds for any \(t\) that satisfies 
\[
0 < t\leq \frac{\mu - \tau}{L_{S_k}}. 
\] 
Combining with the backtracking technique used in Algorithm~\ref{alg:pnewton}, we have \(\alpha_k \geq \min\{1, \frac{\theta(\mu - \tau)}{L_{S_k}}\}\). Therefore,  
\[
\varphi(x^k) - \varphi(x^k + \alpha_k d_k) \geq \frac{\tau }{2}\alpha_k\|d_k\|^2 \geq \frac{\tau}{2}\min\{1, \frac{\theta(\mu - \tau)}{L_{S_k}}\}\|d_k\|^2.
\]
This completes the proof of the lemma.  
\end{proof}

Define \(\cG_{S_k}(y) = y - {\rm prox}_{\tilde{g}_k}(y - \nabla f(x^k)_{S_k})\). Next, we establish the bound of \(\|\cG_{S_k}(y^k)\|\), which will be used in the subsequent analysis on the convergence rate of Algorithm~\ref{alg:pnewton}. Throughout this paper we assume that
\begin{equation}\label{eq:dishfqk}
\|\nabla^2f(x^k) - Q_k\| \leq \zeta, \quad \forall k\in\mathbb{N}
\end{equation}
for some \(\zeta > 0\). 
Notice that \(\|\nabla^2f(x^k)_{S_k} - (Q_k)_{S_k}\| \leq \|\nabla^2f(x^k) - Q_k\|\). 
Combine with Assumption~\ref{assume:ncp} (i), inequality~\eqref{eq:dishfqk} implies that \(\max\{\|(Q_k)_{S_k}\|\} \leq \max\{\|Q_k\|\} \leq \zeta + \|\nabla^2f(x)\| \leq \zeta + L_g\). Without loss of generality, we can assume that 
\[
0 \leq \eta_k \leq \bar{\eta}: = \mu + 2L_g + \zeta, \quad \forall k \in \mathbb{N}. 
\]

\begin{lemma}\label{lem:ngk}
Suppose for \(k\in\mathbb{N}\), \((Q_k)_{S_k} + (\eta_k - \mu) I_{\vert S_k\vert} \succeq 0\), the boundedness~\eqref{eq:dishfqk}, and Assumption~\ref{assume:ncp} hold. Let \(x^k\) be the point generated by Algorithm~\ref{alg:pnewton} and \(y^k = x^k_{S_k}\). We have 
\[
\|\cG_{S_k}(y^k)\| \leq c_1\|d_k\|, 
\]
where \(c_1 = 1 + L_g + \zeta + \bar{\eta} + \frac{\mu}{2}\). 
\end{lemma}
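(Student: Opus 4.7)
The plan is to relate the residual \(\cG_{S_k}(y^k)\) to the approximate optimality of \(\hat y^k\) for the subproblem by expressing \(\hat y^k\) as the proximal map of an explicit point, and then comparing this with \({\rm prox}_{g_k}(y^k - \nabla f(x^k)_{S_k})\) via the nonexpansiveness of the prox.

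First, I would translate the accuracy criterion~\eqref{eq:errvek} into a prox identity. Applying the subdifferential sum rule to \(\partial q^k_{S_k}(\hat y^k)\) along with the inclusion \(\varsigma_k \in \partial q^k_{S_k}(\hat y^k)\) yields
\[
\xi_k := \varsigma_k - \nabla f(x^k)_{S_k} - ((Q_k)_{S_k} + \eta_k I_{\vert S_k\vert})(\hat y^k - y^k) \in \partial g_k(\hat y^k),
\]
so by the characterization \(z = {\rm prox}_{g_k}(u) \Leftrightarrow u - z \in \partial g_k(z)\) we obtain \(\hat y^k = {\rm prox}_{g_k}(\hat y^k + \xi_k)\). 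A direct rearrangement then produces the key identity
\[
\hat y^k + \xi_k = (y^k - \nabla f(x^k)_{S_k}) + (I_{\vert S_k\vert} - (Q_k)_{S_k} - \eta_k I_{\vert S_k\vert})(\hat y^k - y^k) + \varsigma_k.
\]

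Second, I would decompose
\[
\cG_{S_k}(y^k) = (y^k - \hat y^k) + \bigl({\rm prox}_{g_k}(\hat y^k + \xi_k) - {\rm prox}_{g_k}(y^k - \nabla f(x^k)_{S_k})\bigr),
\]
apply the triangle inequality, and use the 1-Lipschitz continuity of \({\rm prox}_{g_k}\) to get
\[
\|\cG_{S_k}(y^k)\| \leq \|d_k\| + \|(I_{\vert S_k\vert} - (Q_k)_{S_k} - \eta_k I_{\vert S_k\vert})(\hat y^k - y^k) + \varsigma_k\|.
\]
The right-hand side is then bounded using the spectral estimate \(\|(Q_k)_{S_k} + \eta_k I_{\vert S_k\vert}\| \leq L_g + \zeta + \bar\eta\) (which follows from Assumption~\ref{assume:ncp}(i), inequality~\eqref{eq:dishfqk}, and \(\eta_k \leq \bar\eta\)), the accuracy bound \(\|\varsigma_k\| \leq \tfrac{\mu}{2}\|d_k\|\), and the identity \(\|\hat y^k - y^k\| = \|d_k\|\) (since \(d_k\) is supported on \(S_k\)), producing the claimed constant \(c_1 = 1 + L_g + \zeta + \bar\eta + \tfrac{\mu}{2}\).

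I do not expect any substantive obstacle: the argument is essentially subgradient bookkeeping combined with the nonexpansiveness of the prox. The one delicate point is arranging the prox-identity for \(\hat y^k\) so that its argument appears as a perturbation of \(y^k - \nabla f(x^k)_{S_k}\) whose norm is controlled by a constant multiple of \(\|d_k\|\); this is what allows a single application of the nonexpansiveness estimate to deliver the desired inequality.
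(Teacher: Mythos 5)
Your route is genuinely different from the paper's and is sound in outline, but as written it does not deliver the stated constant \(c_1\). Tracking your own estimate: the split \(\cG_{S_k}(y^k) = (y^k - \hat y^k) + \bigl({\rm prox}_{g_k}(\hat y^k + \xi_k) - {\rm prox}_{g_k}(y^k - \nabla f(x^k)_{S_k})\bigr)\) contributes \(\|d_k\|\) from the first summand, and nonexpansiveness bounds the second by
\[
\bigl\|\bigl(I_{\vert S_k\vert} - ((Q_k)_{S_k} + \eta_k I_{\vert S_k\vert})\bigr)(\hat y^k - y^k)\bigr\| + \|\varsigma_k\| \;\leq\; \bigl(1 + L_g + \zeta + \bar{\eta}\bigr)\|d_k\| + \tfrac{\mu}{2}\|d_k\|,
\]
since \(\|I - M\| \leq 1 + \|M\|\) for \(M = (Q_k)_{S_k} + \eta_k I_{\vert S_k\vert}\). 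Summing gives \(\bigl(2 + L_g + \zeta + \bar{\eta} + \tfrac{\mu}{2}\bigr)\|d_k\|\), which exceeds \(c_1 = 1 + L_g + \zeta + \bar{\eta} + \tfrac{\mu}{2}\) by \(1\). The sharper spectral bound \(\|I - M\| \leq \max\{1-\mu,\, L_g+\zeta+\bar\eta - 1\}\) does not rescue the claimed constant in general either (e.g.\ when \(L_g,\zeta,\mu\) are all small). The structural reason is that your decomposition forces the identity part \(y^k - \hat y^k\) and the matrix part \((I-M)(\hat y^k - y^k)\) into separate norms, with a sign that prevents recombination.

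The paper avoids this loss by never splitting off \(y^k - \hat y^k\): it introduces the auxiliary residual \(r^k_{S_k}(\hat y^k)\) of the regularized model, shows \(\|r^k_{S_k}(\hat y^k)\| \leq \|\hat{\ve}_k\| \leq \tfrac{\mu}{2}\|d_k\|\), and then compares \(\cG_{S_k}(y^k)\) with \(r^k_{S_k}(\hat y^k)\) through the monotonicity of \(\partial g_k\) (equivalently, firm nonexpansiveness of the prox). That argument produces the single factor \(\|(I + M)(y^k - \hat y^k)\|\), and because \(M \succeq 0\) one has \(\|I + M\| = 1 + \|M\| \leq 1 + L_g + \zeta + \bar{\eta}\) with no extra unit. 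Your proof therefore establishes the lemma only with the weaker constant \(2 + L_g + \zeta + \bar{\eta} + \tfrac{\mu}{2}\) (which would still suffice for the qualitative convergence results downstream, at the cost of worse constants in Theorems~\ref{th:limitsppn} and~\ref{th:lcrxk}); to get \(c_1\) itself you should replace plain nonexpansiveness by the firm-nonexpansiveness/monotonicity comparison against \(r^k_{S_k}(\hat y^k)\).
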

\begin{proof}
Define \(r_{S_k}^k(y) = y - {\rm prox}_{\tilde{g}_k}(y - (\nabla f(x^k)_{S_k} + ((Q_k)_{S_k} + \eta_kI_{\vert S_k\vert})(y - y^k)))\). We have 
\begin{equation}\label{eq:ek1}
\hat{y}^k - r^k_{S_k}(\hat{y}^k) = {\rm prox}_{\tilde{g}_k}(\hat{y}^k - \nabla f(x^k)_{S_k} - ((Q_k)_{S_k}  + \eta_kI_{\vert S_k\vert})(\hat{y}^k - y^k)). 
\end{equation}
Recall~\eqref{eq:xsub}, we have 
\begin{equation}\label{eq:delta1}
\hat{y}^k ={\rm prox}_{\tilde{g}_k}(\hat{y}^k - \nabla f(x_k)_{S_k} - ((Q_k)_{S_k} + \eta_kI_{\vert S_k\vert})(\hat{y}^k - y^k) - \hat{\ve}_k). 
\end{equation}
Using the nonexpansivity of \({\rm prox}_{\tilde{g}_k}\)~\cite[Th. 6.42]{B17},~\eqref{eq:ek1} and~\eqref{eq:delta1} yield 
\begin{equation}\label{eq:evare}
\|r_{S_k}^k(\hat{y}^k)\| \leq \|\hat{\ve}_k\|.
\end{equation}
Notice that \eqref{eq:ek1} also implies
\begin{equation}\label{eq:ek2}
r_{S_k}^k(\hat{y}^k) - \nabla f(x^k)_{S_k} - ((Q_k)_{S_k} + \eta_kI_{\vert S_k\vert})(\hat{y}^k - y^k) \in \partial \tilde{g}_k(\hat{y}^k - r_{S_k}^k(\hat{y}^k)). 
\end{equation}
Form the definition of \(\cG_{S_k}(y)\), we have
\begin{equation}\label{eq:gsxk}
\cG_{S_k}(y^k) - \nabla f(x^k)_{S_k} \in\partial \tilde{g}_k(y^k - \cG_{S_k}(y^k)).
\end{equation}
Using the monotonicity of \(\partial \tilde{g}_k\),~\eqref{eq:ek2} and~\eqref{eq:gsxk} yield
\[
\langle \cG_{S_k}(y^k) + ((Q_k)_{S_k} + \eta_kI_{\vert S_k\vert})(\hat{y}^k - y^k) - r_{S_k}^k(\hat{y}^k), y^k - \cG_{S_k}(y^k) - \hat{y}^k + r_{S_k}^k(\hat{y}^k)\rangle \geq 0.
\]
Combine the above inequality with \((Q_k)_{S_k} + (\eta_k - \mu) I_{\vert S_k\vert} \succeq 0\), we have 
\begin{align*}
\|\cG_{S_k}(y^k) - r_{S_k}^k(\hat{y}^k)\|^2 \leq \langle \cG_{S_k}(y^k) - r_{S_k}^k(\hat{y}^k), y^k - \hat{y}^k + ((Q_k)_{S_k} + \eta_kI_{\vert S_k\vert})(y^k - \hat{y}^k)\rangle.
\end{align*}
By Cauchy inequality and~\eqref{eq:dishfqk}, we have 
\[
\|\cG_{S_k}(y^k) - r_{S_k}^k(\hat{y}^k)\|\leq \|((Q_k)_{S_k} + (1 + \eta_k)I_{\vert S_k\vert})(y^k - \hat{y}^k)\| \leq \hat{\eta}\|d_k\|, 
\]
where \(\hat{\eta} = 1 + L_g + \zeta + \bar{\eta}\). Therefore, 
\[
\|\cG_{S_k}(y^k)\| \leq \|\cG_{S_k}(y^k) - r_{S_k}^k(\hat{y}^k)\| + \|r_{S_k}^k(\hat{y}^k)\| \leq (\hat{\eta} + \frac{\mu}{2})\|d_k\| = c_1\|d_k\|.
\]
The statement holds.  
\end{proof}

\subsection{Convergence of expected objective value}

In this subsection, we show that the expected objective values sequence
generated by Algorithm~\ref{alg:pnewton} converges to the expectation of the limit of the objective values.

After \(k\) iterations, Algorithm~\ref{alg:pnewton} generates a random output \(\{(x^k, \varphi(x^k))\}\), which depends on the observed realization of the history of random index selection. Denote 
\[
\xi_k = \{S_0, S_1, \ldots, S_k\}
\]
and \(\mathbb{E}_{\xi_{-1}}[\varphi(x^0)] = \varphi(x^0)\). 

\begin{theorem}\label{th:cfv}
Suppose for any \(k\in\mathbb{N}\), \((Q_k)_{S_k} + (\eta_k - \mu) I_{\vert S_k\vert} \succeq 0\) and Assumption~\ref{assume:ncp} hold. Let \(\{x^k\}\) and \(\{d_k\}\) be the sequences generated by Algorithm~\ref{alg:pnewton}. 
Then the following statements hold:
\begin{itemize}
\item[(i)] \(\lim_{k\to\infty}\|d_k\| = 0\) and \(\lim_{k\to\infty}\varphi(x^k) = \varphi_{\xi_{\infty}}^*\) for some \(\varphi_{\xi_{\infty}}^*\in\mathbb{R}\), where \(\xi_{\infty} = \{S_0, S_1, \ldots\}\). 
\item[(ii)] \(\lim_{k\to\infty}\mathbb{E}_{\xi_k}[\|d_k\|] = 0\) and \(\lim_{k\to\infty}\mathbb{E}_{\xi_{k-1}}[\varphi(x^k)] = \mathbb{E}_{\xi_{\infty}}[\varphi_{\xi_{\infty}}^*]\). 
\end{itemize}
\end{theorem}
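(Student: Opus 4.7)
The plan is to treat part (i) as a pathwise (deterministic) statement for each realization $\xi_\infty = \{S_0, S_1, \ldots\}$, and then obtain part (ii) by taking expectations and invoking a monotone convergence argument.

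For part (i), the key input is Lemma~\ref{lem:alpdphi}, which already yields the per-iteration decrease
\[
\varphi(x^{k+1}) - \varphi(x^k) \;\leq\; -\frac{\tau}{2}\min\!\Big\{1, \frac{\theta(\mu-\tau)}{L_{S_k}}\Big\}\|d_k\|^2.
\]
Since $L_{S_k}\leq L_g$ for every $k$ (by the definition of $L_g$), I would replace the right-hand factor by the uniform constant $c_{\mathrm{low}} := \frac{\tau}{2}\min\{1,\theta(\mu-\tau)/L_g\} > 0$. This shows $\{\varphi(x^k)\}$ is monotonically nonincreasing along any realization. Combined with $\varphi(x^k)\geq \varphi_*$ from Assumption~\ref{assume:ncp}(iii), the sequence $\{\varphi(x^k)\}$ converges to some limit $\varphi_{\xi_\infty}^*\in[\varphi_*,\varphi(x^0)]$, which depends on the realization. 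Telescoping the decrease bound gives
\[
c_{\mathrm{low}}\sum_{k=0}^{\infty}\|d_k\|^2 \;\leq\; \varphi(x^0) - \varphi_{\xi_\infty}^* \;\leq\; \varphi(x^0) - \varphi_*\;<\;\infty,
\]
so the summability of $\|d_k\|^2$ forces $\|d_k\|\to 0$ pathwise. This completes part (i).

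For part (ii), I would first take the full expectation $\mathbb{E}_{\xi_k}$ of the recursion above. Using the tower property and the fact that the bound holds surely for every $\xi_k$, I obtain
\[
\mathbb{E}_{\xi_k}[\varphi(x^{k+1})] - \mathbb{E}_{\xi_{k-1}}[\varphi(x^k)] \;\leq\; -c_{\mathrm{low}}\,\mathbb{E}_{\xi_k}[\|d_k\|^2].
\]
Hence $\{\mathbb{E}_{\xi_{k-1}}[\varphi(x^k)]\}$ is a monotone nonincreasing real sequence bounded below by $\varphi_*$, so it converges, and telescoping yields $\sum_{k=0}^{\infty}\mathbb{E}_{\xi_k}[\|d_k\|^2]\leq(\varphi(x^0)-\varphi_*)/c_{\mathrm{low}}<\infty$. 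In particular $\mathbb{E}_{\xi_k}[\|d_k\|^2]\to 0$, and then by Jensen's inequality
\[
\bigl(\mathbb{E}_{\xi_k}[\|d_k\|]\bigr)^2 \;\leq\; \mathbb{E}_{\xi_k}[\|d_k\|^2]\;\longrightarrow\;0,
\]
establishing the first half of (ii).

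The remaining step, identifying the limit of $\mathbb{E}_{\xi_{k-1}}[\varphi(x^k)]$ with $\mathbb{E}_{\xi_\infty}[\varphi_{\xi_\infty}^*]$, is where the probabilistic bookkeeping is most delicate; this is the main obstacle. I would argue as follows: along every realization, part (i) gives $\varphi(x^k)\downarrow \varphi_{\xi_\infty}^*$ monotonically, and $\varphi(x^0)-\varphi(x^k)$ is a nonnegative, monotonically nondecreasing sequence of random variables dominated by the constant $\varphi(x^0)-\varphi_*$. By the monotone convergence theorem applied to $\varphi(x^0)-\varphi(x^k)$ (equivalently, dominated convergence on $\varphi(x^k)$ with the integrable dominating function $\max\{|\varphi(x^0)|,|\varphi_*|\}$), one may interchange limit and expectation:
\[
\lim_{k\to\infty}\mathbb{E}_{\xi_{k-1}}[\varphi(x^k)] \;=\; \mathbb{E}_{\xi_\infty}\!\Bigl[\lim_{k\to\infty}\varphi(x^k)\Bigr] \;=\; \mathbb{E}_{\xi_\infty}[\varphi_{\xi_\infty}^*],
\]
completing the proof. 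The only subtlety is to make sure that $\varphi_{\xi_\infty}^*$, defined pathwise as a limit, is a measurable function of $\xi_\infty$, which follows because it is the pointwise limit of the measurable sequence $\{\varphi(x^k)\}$.
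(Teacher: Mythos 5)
Your proposal is correct and follows essentially the same route as the paper: pathwise monotone decrease of $\varphi(x^k)$ from Lemma~\ref{lem:alpdphi} with the uniform constant obtained via $L_{S_k}\le L_g$, then taking expectations to get $\mathbb{E}_{\xi_k}[\|d_k\|^2]\to 0$ (the paper extracts this by matching the limits of $\mathbb{E}_{\xi_k}[\varphi(x^{k+1})]$ and $\mathbb{E}_{\xi_k}[\varphi(x^k)]$ rather than by telescoping, which is equivalent), Jensen's inequality for $\mathbb{E}_{\xi_k}[\|d_k\|]$, and finally the bounded/dominated convergence theorem with the uniform bound $\max\{|\varphi(x^0)|,|\varphi_*|\}$ to identify the limit with $\mathbb{E}_{\xi_\infty}[\varphi_{\xi_\infty}^*]$, exactly as the paper does via its citation of the bounded convergence theorem.
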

\begin{proof}
From~\eqref{eq:dvarphi}, we have   
\[
\varphi(x^{k+1}) \leq \varphi(x^k) \quad {\rm and}\quad \mathbb{E}_{\xi_k}[\varphi(x^{k+1})] \leq \mathbb{E}_{\xi_{k-1}}[\varphi(x^k)] \quad \forall k\geq 0.
\]
Hence, \(\{\varphi(x^k)\}\) and \(\{\mathbb{E}_{\xi_{k-1}}[\varphi(x^k)]\}\) are nonincreasing. Since 
\(\varphi\) is bounded below, so are \(\{\varphi(x^k)\}\) and \(\{\mathbb{E}_{\xi_{k-1}}[\varphi(x^k)]\}\). It follows that there exist some \(\varphi_{\xi_{\infty}}^*\), \(\widetilde{\varphi}^*\in\mathbb{R}\) such that 
\[
\lim_{k\to\infty}\varphi(x^k) = \varphi_{\xi_{\infty}}^* \quad {\rm and}\quad \lim_{k\to\infty}\mathbb{E}_{\xi_{k-1}}[\varphi(x^k)] = \widetilde{\varphi}^*. 
\]
In addition, it follows from~\eqref{eq:dvarphi} that \(\lim_{k\to\infty}\|d_k\| = 0\) and 
\[
\mathbb{E}_{\xi_k}[\varphi(x^{k+1})] \leq \mathbb{E}_{\xi_k}[\varphi(x^k)] - \frac{\tau}{2}\min\{1, \frac{\theta(\mu- \tau)}{L_g}\}\mathbb{E}_{\xi_k}[\|d_k\|^2], \quad \forall k\geq 0.
\]
Taking \(k\to \infty\) on both side of the above inequality and noting that 
\[
\lim_{k\to\infty}\mathbb{E}_{\xi_k}[\varphi(x^k)] = \lim_{k\to\infty}\mathbb{E}_{\xi_{k-1}}[\varphi(x^k)] = \widetilde{\varphi}^* = \lim_{k\to\infty}\mathbb{E}_{\xi_k}[\varphi(x^{k+1})],
\]
we conclude that \(\lim_{k\to\infty}\mathbb{E}_{\xi_{k}}[\|d_k\|^2] = 0\), which yields \(\lim_{k\to\infty}\mathbb{E}_{\xi_{k}}[\|d_k\|] = 0\). Notice that \(\varphi_* \leq \varphi(x^k) \leq \varphi(x^0)\), which implies that \(\vert \varphi(x^k)\vert \leq \max\{\vert \varphi(x^0)\vert, \vert \varphi_*\vert\}\) for all \(k\) and \(\{\varphi(x^k)\}\) is uniformly bounded. Then by~\cite[Theorem 5.4]{B95}, we have
\[
\mathbb{E}_{\xi_{\infty}}[\varphi_{\xi_{\infty}}^*]  = \lim_{k\to\infty}\mathbb{E}_{\xi_{\infty}}[\varphi(x^k)]. 
\] 
Together with \(\lim_{k\to\infty}\mathbb{E}_{\xi_{k-1}}[\varphi(x^k)] = \lim_{k\to\infty}\mathbb{E}_{\xi_{\infty}}[\varphi(x^k)]\), we have 
\[
\lim_{k\to\infty}\mathbb{E}_{\xi_{k-1}}[\varphi(x^k)] = \mathbb{E}_{\xi_{\infty}}[\varphi_{\xi_{\infty}}^*].
\]
\end{proof}

\subsection{Global convergence of Algorithm~\ref{alg:pnewton}}

In this subsection, we present the global convergence of Algorithm~\ref{alg:pnewton} in terms of the minimum (expected) norm of \(\mathcal{G}(x^k)\) under different sampling assumptions. 

\begin{assumption}\label{assume:diffs}
Suppose \(\{S_k\}\) satisfies one of the following assumptions. Let \(p_i^k = \mathbb{P}(i\in S_k)\) for any \(k\in\mathbb{N}\).
\begin{itemize}
\item[\textbf{S1}.] The sampling \(S_k\) satisfies \(p_i^k \equiv p_i\) with \(p_i \geq p_{\rm min}> 0\), \(i = 1, \cdots, n\). %
\item[\textbf{S2}.] The sampling \(S_k\) satisfies 
\begin{equation}\label{eq:s4}
\|\mathcal{G}(x)_{S_k}\|^2 \geq c\|\mathcal{G}(x)\|^2, \quad \forall x
\end{equation}
for some \(c > 0\). 
\end{itemize}
\end{assumption}
\begin{remark}
Assumption~\ref{assume:diffs} \textbf{S1} holds with \(p_i^k = \frac{s}{n}\) if \(S_k\) is uniformly sampled with \(\vert S_k\vert \equiv s\) for some \(1\leq s \leq n\). 
The top-\(\mathbf{k}\) Gauss-Southwell sampling~\cite{FLD18} satisfies Assumption~\ref{assume:diffs} \textbf{S2} with \(c = \frac{\mathbf{k}}{n}\), where \(S_k\) consists of the indices corresponding to the top \(\mathbf{k}\) largest entries of \(\vert \mathcal{G}(x^k)\vert\). To avoid full gradient computation, we can adopt the following sampling strategy to determine \(S_k\): uniformly partition \([n]\) into \(q\) disjoint blocks \(\Omega_1, \ldots, \Omega_q\); at each iteration, randomly select one block and applying top-\(\mathbf{k}\) sampling to the gradient restricted to this block. Let \(\Omega_{s_k}\) be the block chosen at iteration \(k\). Then  
\[
\|\cG(x)_{S_k}\|^2 \geq \frac{\mathbf{k}}{\vert \Omega_{s_k}\vert}\|\cG(x)_{\Omega_{s_k}}\|^2
\]
by adaptivity of top-\(\mathbf{k}\) sampling. Let \(F(x) = \|\cG(x)_{\Omega_{s_k}}\|^2\). Then \(\mathbb{E}[F] = \frac{\vert \Omega_{s_k}\vert}{n}\|\cG(x)\|^2\). For any \(i \in [n]\), let \(x^{i'}\in\mathcal{L}_{\varphi}(x^0)\) be the vector that differs from \(x\) only at the \(i\)-th entry. Under Assumption~\ref{assume:ncp}, we have 
\[
\vert F(x) - F(x^{i'})\vert = \vert \|\cG(x)_{\Omega_{s_k}}\|^2 - \|\cG(x^{i'})_{\Omega_{s_k}}\|^2\vert \leq (\|\cG(x)\| + \|\cG(x^{i'})\|)^2 \leq 4\bar{\epsilon}_1^2. 
\]
By McDiarmid's inequality~\cite{M1989}, for any \(\varepsilon > 0\), we have 
\[
\mathbb{P}(\vert F(x) - \mathbb{E}[F(x)] \vert \geq \varepsilon) \leq 2\exp(-\frac{\varepsilon^2}{8n\bar{\epsilon}_1^4}).
\]
Setting \(\varepsilon = \delta_0 \frac{\vert \Omega_{s_k}\vert}{n}\|\cG(x)\|^2\) with \(\delta_0 \in (0, 1)\) yields
\[
\mathbb{P}\left(\big\vert \|\cG(x)_{\Omega_{s_k}}\|^2 - \frac{\vert \Omega_{s_k}\vert}{n}\|\cG(x)\|^2 \big\vert\right) \geq \delta_0 \frac{\vert \Omega_{s_k}\vert}{n}\|\cG(x)\|^2 \leq 2\exp(-\frac{\delta_0^2\vert \Omega_{s_k}\vert^2\|\cG(x)\|^4}{8n^3\bar{\epsilon}_1^4}).
\]
Thus, 
\[
\|\cG(x)_{\Omega_{s_k}}\|^2 \geq (1 - \delta_0)\frac{\vert \Omega_{s_k}\vert}{n}\|\cG(x)\|^2
\]
with probability at least \(1 - 2\exp(-\frac{\delta_0^2\vert \Omega_{s_k}\vert^2\|\cG(x)\|^4}{8n^3\bar{\epsilon}_1^4})\), which implies 
\[
\|\cG(x)_{S_k}\|^2 \geq \frac{\mathbf{k}}{n}(1 - \delta_0)\|\cG(x)\|^2
\]
with the same probability.
\end{remark}

\begin{proposition}\label{prop:gsg}
Suppose Assumption~\ref{assume:ncp} holds. 
Let \(\{x^k\}\) and \(\{y^k\}\) be the sequences generated by Algorithm~\ref{alg:pnewton}. 
Then, under Assumption~\ref{assume:diffs}~\textbf{S1}, we have 
\begin{equation}\label{eq:s3}
\mathbb{E}_{\xi_k}[\|\cG_{S_k}(y^k)\|^2] \geq p_{\min}\mathbb{E}_{\xi_k}[\|\cG(x^k)\|^2], \quad \forall k\in\mathbb{N}.
\end{equation}
\end{proposition}
\begin{proof}
Recall Proposition~\ref{prop:gsy}, \(\cG_{S_k}(y^k)\) is a subvector of \(\cG(x^k)\) corresponding to \(S_k\), which leads to 
\begin{align*}
\mathbb{E}_{\xi_k}[\|\cG_{S_k}(y^k)\|^2] =& \sum_{i=1}^n\mathbb{E}_{\xi_k}[(\cG(x^k)_i\delta_{S_k}^i)^2] = \sum_{i=1}^n\mathbb{E}_{\xi_k}[(\cG(x^k)_i)^2p_i^k] \geq p_{\min}\sum_{i=1}^n\mathbb{E}_{\xi_k}[(\cG(x^k)_i)^2]\\
=& p_{\min}\sum_{i=1}^n\mathbb{E}_{\xi_k}[\|\cG(x^k)\|^2],
\end{align*}
where \(\delta_{S_k}^i = 1\) if \(i \in S_k\) and \(\delta_{S_k}^i = 0\) if \(i \notin S_k\) and the inequality holds since \(p_i^k \geq p_{\min}\) under Assumption~\ref{assume:diffs}~\textbf{S1}.


\end{proof}

\begin{theorem}\label{th:clusterpoint}
Suppose for any \(k\in\mathbb{N}\), \((Q_k)_{S_k} + (\eta_k - \mu) I_{\vert S_k\vert} \succeq 0\), the boundedness~\eqref{eq:dishfqk}, and Assumption~\ref{assume:ncp} hold. Let \(\{x^k\}\) be the sequence generated by Algorithm~\ref{alg:pnewton} and \(\omega(x^0)\) be the cluster points set of \(\{x^k\}\). 
Then the following statements hold. 
\begin{itemize}
\item[(i)] Under Assumption~\ref{assume:diffs}~\textbf{S1}, we have 
\begin{equation}\label{eq:expectstationary}
\lim_{k\to\infty}\mathbb{E}_{\xi_k}[\|\cG(x^k)\|] = 0. 
\end{equation}
\item[(ii)] Under Assumption~\ref{assume:diffs}~\textbf{S2}, we have 
\begin{equation}\label{eq:stationary}
\lim_{k\to\infty}\|\cG(x^k)\| = 0, 
\end{equation}
that is, \(\omega(x^0)\subseteq \mathcal{S}^*\). Moreover, \(\omega(x^0)\) is nonempty and compact. 
\end{itemize}
\end{theorem}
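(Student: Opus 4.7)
The plan is to chain together three results already established: Lemma~\ref{lem:ngk}, which bounds \(\|\cG_{S_k}(y^k)\|\) by \(c_1\|d_k\|\); Proposition~\ref{prop:gsg}, which relates the expected squared norm of the sub-sampled residual to the full residual; and Theorem~\ref{th:cfv}, which gives \(\|d_k\|\to 0\) pathwise and \(\mathbb{E}_{\xi_{k-1}}[\|d_k\|^2]\to 0\) in expectation (this squared expectation appears explicitly in the proof of Theorem~\ref{th:cfv}).

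For part (i) under Assumption~\ref{assume:diffs}~\textbf{S2}, I would square the bound in Lemma~\ref{lem:ngk}, take expectation over \(\xi_k\), and use Proposition~\ref{prop:gsg}(ii) to obtain
\[
p_{\min}\,\mathbb{E}_{\xi_k}\!\left[\|\cG(x^k)\|^2\right] \;\leq\; \mathbb{E}_{\xi_k}\!\left[\|\cG_{S_k}(y^k)\|^2\right] \;\leq\; c_1^2\,\mathbb{E}_{\xi_k}\!\left[\|d_k\|^2\right].
\]
Sending \(k\to\infty\) and invoking \(\mathbb{E}_{\xi_{k-1}}[\|d_k\|^2]\to 0\) from Theorem~\ref{th:cfv}, we get \(\mathbb{E}_{\xi_k}[\|\cG(x^k)\|^2] \to 0\). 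A single application of Jensen's inequality \(\mathbb{E}[\|\cG(x^k)\|]\leq\sqrt{\mathbb{E}[\|\cG(x^k)\|^2]}\) converts this into \eqref{eq:expectstationary}.

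For part (ii) under Assumption~\ref{assume:diffs}~\textbf{S3}, the argument becomes pathwise. Combining the deterministic inequalities
\[
c\,\|\cG(x^k)\|^2 \;\leq\; \|\cG(x^k)_{S_k}\|^2 \;=\; \|\cG_{S_k}(y^k)\|^2 \;\leq\; c_1^2\,\|d_k\|^2
\]
(where the equality is Proposition~\ref{prop:gsy}(ii)) with \(\|d_k\|\to 0\) from Theorem~\ref{th:cfv}(i) immediately yields \eqref{eq:stationary}. For the claim \(\omega(x^0)\subseteq\mathcal{S}^*\), I would pick any cluster point \(\bar x\) with \(x^{k_j}\to\bar x\); continuity of \(\nabla f\) together with nonexpansivity (hence continuity) of \({\rm prox}_g\) makes \(\cG\) continuous, so \(\cG(\bar x)=\lim_j\cG(x^{k_j})=0\), i.e.\ \(\bar x\in\mathcal{S}^*\).

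For nonemptiness and compactness of \(\omega(x^0)\), I would observe that the monotonicity \(\varphi(x^{k+1})\leq\varphi(x^k)\) from Lemma~\ref{lem:alpdphi} keeps \(\{x^k\}\subset\mathcal{L}_\varphi(x^0)\), which is bounded by Assumption~\ref{assume:ncp}(iii); hence \(\{x^k\}\) is bounded and has at least one cluster point, and \(\omega(x^0)=\bigcap_{K\geq 0}\overline{\{x^k:k\geq K\}}\) is a closed subset of a bounded set, therefore compact. I do not anticipate a substantial obstacle: the main issue is only bookkeeping between the pathwise statement (suitable for part (ii)) and the expectation statement (suitable for part (i)), together with the Jensen step needed to move from the squared to the unsquared norm in part (i).
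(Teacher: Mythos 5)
Your proposal is correct and follows essentially the same route as the paper: Lemma~\ref{lem:ngk} combined with Proposition~\ref{prop:gsg}(ii) (resp.\ the deterministic inequality~\eqref{eq:s4}) to pass from \(\|d_k\|\) to \(\|\cG(x^k)\|\), then the convergence of \(\|d_k\|\) and of \(\mathbb{E}[\|d_k\|^2]\) from Theorem~\ref{th:cfv}, with continuity of \(\cG\) and boundedness of the level set for the claims about \(\omega(x^0)\). The only difference is that you spell out the Jensen step and the use of \(\mathbb{E}[\|d_k\|^2]\to 0\), which the paper leaves implicit.
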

\begin{proof}
(i) Under Assumption~\ref{assume:diffs}~\textbf{S1}, from~\eqref{eq:s3} and Lemma~\ref{lem:ngk}, we have 
\[
\mathbb{E}_{\xi_k}[\|\mathcal{G}(x^k)\|^2] \leq \frac{1}{p_{\min}}\mathbb{E}_{\xi_k}[\|\cG_{S_k}(y^k)\|^2] \leq \frac{c_1^2}{p_{\min}}\mathbb{E}_{\xi_k}[\|d_k\|^2].  
\]
Hence, \eqref{eq:expectstationary} holds.

(ii) Under Assumption~\ref{assume:diffs}~\textbf{S2}, from~\eqref{eq:s4} and Lemma~\ref{lem:ngk}, we have 
\begin{equation}\label{eq:xd}
\|\cG(x^k)\|^2 \leq \frac{1}{c}\|\cG_{S_k}(y^k)\|^2 \leq \frac{c_1^2}{c}\|d_k\|^2. 
\end{equation}
\eqref{eq:stationary} holds by taking \(k\) to \(\infty\) on the both side of the above inequality and combining with Theorem~\ref{th:cfv} (i). Hence, we have \(\omega(x^0) \subseteq \mathcal{S}^*\). 
\(\omega(x^0)\) is nonempty and bounded since \(\{x^k\}\subseteq \mathcal{L}_{\varphi}(x^0)\) is bounded. 
The continuity of \(\mathcal{G}\) ensures the closedness of \(\omega(x^0)\) and \(\|\mathcal{G}(\bar{x})\| = 0\) for any \(\bar{x}\in\omega(x^0)\).
\end{proof}

\begin{theorem}\label{th:limitsppn}
Suppose for any \(k\in\mathbb{N}\), \((Q_k)_{S_k} + (\eta_k - \mu) I_{\vert S_k\vert} \succeq 0\), the boundedness~\eqref{eq:dishfqk}, and Assumption~\ref{assume:ncp} hold.  Let \(\{x^k\}\) be the sequence generated by Algorithm~\ref{alg:pnewton}. Then, the following statements hold. 
\begin{itemize}
\item[(i)] Under Assumption~\ref{assume:diffs}~\textbf{S1}, we have  
\begin{equation}\label{eq:minexpectcglsnew12}
\min_{0\leq k\leq K}\mathbb{E}_{\xi_k}[\|\cG(x^{k})\|^2]  \leq \frac{1}{p_{\min}}\cdot\frac{2c_1^2(\varphi(x^0) - \varphi_*)}{\tau\min\{1, \frac{\theta(\mu - \tau)}{L_g}\}K}.
\end{equation}
\item[(ii)] Under Assumption~\ref{assume:diffs}~\textbf{S2}, we have
\begin{equation}\label{eq:mincglsnew}
\min_{0\leq k\leq K}\|\cG(x^{k})\|^2  \leq \frac{1}{c}\cdot\frac{2c_1^2(\varphi(x^0) - \varphi_*)}{\tau\min\{1, \frac{\theta(\mu - \tau)}{L_g}\}K}. 
\end{equation}
\end{itemize}
\end{theorem}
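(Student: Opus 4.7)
The plan is to combine the per-iteration descent guarantee from Lemma~\ref{lem:alpdphi} with the inequality relating $\|\mathcal{G}(x^k)\|$ to $\|d_k\|$ supplied by Lemma~\ref{lem:ngk} and Proposition~\ref{prop:gsg}, then sum over $k=0,\ldots,K$ and pull out the minimum. Because $L_{S_k}\leq L_g$ for every $k$, the step-size estimate in Lemma~\ref{lem:alpdphi} simplifies to $\alpha_k\geq \min\{1,\theta(\mu-\tau)/L_g\}$, so the descent inequality~\eqref{eq:dvarphi} yields
\[
\varphi(x^{k+1})-\varphi(x^k)\leq -\frac{\tau}{2}\min\!\left\{1,\frac{\theta(\mu-\tau)}{L_g}\right\}\|d_k\|^2.
\]
Telescoping this from $k=0$ to $K$ and using $\varphi(x^{K+1})\geq \varphi_*$ gives the master bound
\[
\sum_{k=0}^{K}\|d_k\|^2 \leq \frac{2(\varphi(x^0)-\varphi_*)}{\tau\min\{1,\theta(\mu-\tau)/L_g\}}.
\]

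For part (ii), I would invoke~\eqref{eq:xd} from the proof of Theorem~\ref{th:clusterpoint}, which under assumption~\textbf{S3} gives the pointwise bound $\|\mathcal{G}(x^k)\|^2\leq (c_1^2/c)\|d_k\|^2$. Summing this against the master bound and using $(K+1)\min_{0\leq k\leq K}\|\mathcal{G}(x^k)\|^2\leq \sum_{k=0}^{K}\|\mathcal{G}(x^k)\|^2$ immediately produces~\eqref{eq:mincglsnew} (with the denominator harmlessly weakened from $K+1$ to $K$).

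For part (i), I would take expectations on both sides of the descent inequality and telescope, obtaining $\sum_{k=0}^{K}\mathbb{E}_{\xi_k}[\|d_k\|^2]\leq \frac{2(\varphi(x^0)-\varphi_*)}{\tau\min\{1,\theta(\mu-\tau)/L_g\}}$. Then Lemma~\ref{lem:ngk} gives $\mathbb{E}_{\xi_k}[\|\mathcal{G}_{S_k}(y^k)\|^2]\leq c_1^2\mathbb{E}_{\xi_k}[\|d_k\|^2]$, while Proposition~\ref{prop:gsg}~(ii) under \textbf{S2} furnishes the reverse-direction bound $\mathbb{E}_{\xi_k}[\|\mathcal{G}_{S_k}(y^k)\|^2]\geq p_{\min}\mathbb{E}_{\xi_k}[\|\mathcal{G}(x^k)\|^2]$. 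Chaining these two gives $\mathbb{E}_{\xi_k}[\|\mathcal{G}(x^k)\|^2]\leq (c_1^2/p_{\min})\mathbb{E}_{\xi_k}[\|d_k\|^2]$, and the same minimum-vs-sum argument as in part~(ii) yields~\eqref{eq:minexpectcglsnew12}.

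Since every ingredient (descent, sampling bound, residual bound) is already proved, I do not expect any genuine obstacle; the only detail that needs care is the bookkeeping of the conditional expectations $\mathbb{E}_{\xi_k}[\,\cdot\,]$ when summing, and a sanity check that the constant $\min\{1,\theta(\mu-\tau)/L_g\}$ indeed lower-bounds the $k$-dependent quantity $\min\{1,\theta(\mu-\tau)/L_{S_k}\}$ uniformly in $k$, which holds because $L_{S_k}\leq L_g$ by the definition of $L_g$.
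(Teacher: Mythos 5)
Your proposal is correct and follows essentially the same route as the paper: both combine the descent estimate of Lemma~\ref{lem:alpdphi} (with $L_{S_k}\leq L_g$), the bound $\|\cG_{S_k}(y^k)\|\leq c_1\|d_k\|$ from Lemma~\ref{lem:ngk}, and the sampling inequalities of Proposition~\ref{prop:gsg} / Assumption~\textbf{S3}, then telescope and bound the minimum by the average. The only cosmetic difference is that you telescope in terms of $\|d_k\|^2$ before converting to $\|\cG(x^k)\|^2$, whereas the paper performs the conversion inside the per-iteration inequality first; the resulting bounds coincide.
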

\begin{proof}
From Lemmas~\ref{lem:alpdphi} and~\ref{lem:ngk}, we have 
\[
\varphi(x^{k+1}) \leq \varphi(x^k) - \frac{\tau}{2c_1^2}\min\{1, \frac{\theta(\mu - \tau)}{L_g}\}
\|\cG_{S_k}(y^k)\|^2,\quad \forall k\in\mathbb{N},
\]
which yields 
\begin{equation}\label{eq:psistar}
\varphi_* \!\leq\! \mathbb{E}_{\xi_{K\!-\!1}}\![\varphi(x^{K})] \!\leq\! \mathbb{E}_{\xi_{K\!-\!1}}\![\varphi(x^{K\!-\!1})]  \!-\! \frac{\tau}{2c_1^2}\min\{1, \frac{\theta(\mu \!-\! \tau)}{L_g}\}\mathbb{E}_{\xi_{K\!-\!1}}[\|\cG_{S_{K\!-\!1}}\!(y^{K\!-\!1})\|^2]. 
\end{equation}

(i) Under Assumption~\ref{assume:diffs}~\textbf{S1}, from~\eqref{eq:s3} and \eqref{eq:psistar}, we have  
\begin{align*}
\varphi_* 
\leq& \mathbb{E}_{\xi_{-1}}[\varphi(x^{0})] - \frac{\tau p_{\min}}{2c_1^2}\min\{1, \frac{\theta(\mu - \tau)}{L_g}\}\sum_{k=0}^{K-1}\mathbb{E}_{\xi_k}[\|\mathcal{G}(x^k)\|^2].
\end{align*}
Hence, we have 
\[
\frac{\tau p_{\min}}{2c_1^2}\min\{1, \frac{\theta(\mu - \tau)}{L_g}\}\sum_{k=0}^{K-1}\mathbb{E}_{\xi_k}[\|\mathcal{G}(x^k)\|^2] \leq \varphi(x^0) - \varphi_*,
\]
which yields \eqref{eq:minexpectcglsnew12}. 

(ii) Under Assumption~\ref{assume:diffs}~\textbf{S2}, from~\eqref{eq:s4},~\eqref{eq:psistar} becomes to  
\begin{align*}
\varphi_* \leq& \varphi(x^K) - \frac{\tau c}{2c_1^2}\min\{1, \frac{\theta(\mu - \tau)}{L_g}\}
\|\cG_{S_k}(x^{K - 1})\|^2\\
\leq& \varphi(x^0) - \frac{\tau c}{2c_1^2}\min\{1, \frac{\theta(\mu - \tau)}{L_g}\}
\sum_{k=0}^{K - 1}\|\cG_{S_k}(x^k)\|^2. 
\end{align*}
Hence, \eqref{eq:mincglsnew} holds.
\end{proof}
Theorems~\ref{th:clusterpoint} (ii) and~\ref{th:limitsppn} (ii) match \cite[Theorem 1]{Z24} for IPNM. 

\section{The  SBCPN Method When \(L_{S_k}\) is Known}\label{appendix: lhisgiven}

In this section, we first show that if \(\) (line 3 in Algorithm~\ref{alg:pnewton}) is replaced by 

\begin{equation}\label{eq:qsketa}
(Q_k)_{S_k} + (\eta_k -  \vartheta)I_{\vert S_k\vert} \succeq  0 \quad {\rm and}\quad Q_k + (\eta_k - L_{S_k} - \mu) I_n \succeq 0, \quad \forall k\in\mathbb{N},
\end{equation}
where \(\vartheta \geq 1.1\mu\times \max\{\frac{1}{2}(1 + 2\zeta + 3L_g + \mu), \frac{1}{2-\mu}(1 + 2\zeta + 2L_g)\}\), then Algorithm~\ref{alg:pnewton} is well-defined with unit step size. Without loss of generality, we can assume that 
\[
0 \leq \eta_k \leq \bar{\eta}:= \max\{\mu + 2L_g + \zeta, \vartheta + {L_g} + \zeta\}, \quad \forall k\in\mathbb{N}. 
\]

We present the SBCPN method for this case in Algorithm~\ref{alg:pmm}. 
\begin{algorithm*}[h!]
\caption{SBCPN method with unit step size.}\label{alg:pmm}
\begin{algorithmic}[1]
\Require{\(x^0\in{\rm dom}g\), \(\bar{\eta}\),  and \(\mu \in (0, 1]\), distribution \(\mathcal{D}\) of random index set.}
\For{\(k = 0, 1, \ldots, \)}
\State{sample \(S_k\) from \(\cal{D}\);}
\State{set \(\eta_k\in(0, \bar{\eta}]\) and \(Q_k\) satisfy~\eqref{eq:qsketa};}
\State{let \(y^k = x^k_{S_k}\), compute}
\[
\hat{y}^k \approx \argmin_{y}\{q^k_{S_k}(y)\}\quad {\rm with}~\varsigma_k\in \partial q^k_{S_k}(\hat{y}^k)~{\rm satisfies}~\eqref{eq:errvek}.
\] 
\State{set \(x^{k+1}\) with \(x^{k+1}_{S_k} = \hat{y}^k\) and \(x^{k+1}_{\overline{S_k}} = x^k_{\overline{S_k}}\).} 
\EndFor\\
\Return{\(\{x^k\}\)}
\end{algorithmic}
\end{algorithm*}

\begin{remark}
As can be seen from the subsequent proof of Theorem~\ref{th:limitspwithoutls} (b), the main role of the constant \(L_{S_k}\) is to obtain \(\varphi(x^k) - \varphi(x^{k+1}) \geq \frac{\mu}{2}\|x^{k+1} - x^k\|^2\) by using
\[
f(x^{k+1}) - f(x^k) - \langle \nabla f(x^k), x^{k+1} - x^k\rangle \leq \frac{L_{S_k}}{2}\|x^{k+1} - x^k\|^2. 
\]
Define \(\tilde{f}(x; L) = f(x) - f(x^k) - \langle \nabla f(x^k), x - x^k\rangle - \frac{L}{2}\|x^{k+1} - x^k\|^2\) and \(q_{S_k}^k(y; Q^k(L)) = l_{S_k}^k(y) + \frac{1}{2}\langle Q^k(L)_{S_k}(y - y^k), y - y^k\rangle\), where \(Q^k(L) = Q_k + \eta_kI_n\) satisfies \(Q_k(L) - (L + \mu)I_n \succeq 0\). If \(L_{S_k}\) is unknown, then a line search strategy on \(L_{S_k}\) (Algorithm~\ref{alg:sbcpnlslsk}) can be employed. Algorithm~\ref{alg:sbcpnlslsk} is well-defined under Assumption~\ref{assume:ncp}. 
\begin{algorithm*}[h!]
\caption{SBCPN method with line search on \(L_{S_k}^k\).}\label{alg:sbcpnlslsk}
\begin{algorithmic}[1]
\Require{\(x^0\in{\rm dom}g\), \(\bar{\eta}\),  and \(\mu \in (0, 1]\), distribution \(\mathcal{D}\) of random index set. \(L^0_{S_0} = \overline{L}\).}
\For{\(k = 0, 1, \ldots, \)}
\State{sample \(S_k\) from \(\cal{D}\);}
\State{set \(\eta_k\in(0, \bar{\eta}]\) and \(Q_k\) satisfy \(Q_k + (\eta_k - L_{S_k}^k - \mu)I_n \succeq 0\);}
\State{let \(y^k = x^k_{S_k}\), compute}
\[
\hat{y}^k(L_{S_k}^k) \approx \argmin_{y}\{q^k_{S_k}(y)\}\quad {\rm with}~\varsigma_k\in \partial q^k_{S_k}(\hat{y}^k)~{\rm satisfies}~\eqref{eq:errvek}.
\] 
\State{set \(x(L_{S_k}^k)\) with \(x(L_{S_k}^k)_{S_k} = \hat{y}^k(L_{S_k}^k)\) and \(x(L_{S_k}^k)_{\overline{S_k}} = x^k_{\overline{S_k}}\).} 
\While{\(\tilde{f}(x(L_{S_k}^k), L_{S_k}^k) > 0\)}
\State{\(L_{S_k}^k = 2L_{S_k}^k\), set \(\eta_k\in(0, \bar{\eta}]\) and \(Q_k\) satisfy \(Q_k + (\eta_k - L_{S_k}^k - \mu)I_n \succeq 0\).}
\State{compute}
\[
\hat{y}^k(L_{S_k}^k) \approx \argmin_{y}\{q^k_{S_k}(y)\}\quad {\rm with}~\varsigma_k\in \partial q^k_{S_k}(\hat{y}^k)~{\rm satisfies}~\eqref{eq:errvek}.
\] 
\State{set \(x(L_{S_k}^k)\) with \(x(L_{S_k}^k)_{S_k} = \hat{y}^k(L_{S_k}^k)\) and \(x(L_{S_k}^k)_{\overline{S_k}} = x^k_{\overline{S_k}}\).} 
\EndWhile
\State{set \(x^{k+1} = x(L_{S_k}^k)\).}
\State{set \(L_{S_{k+1}}^{k+1} = \max\{\frac{1}{2}L_{S_k}^k, \overline{L}\}\).}
\EndFor\\
\Return{\(\{x^k\}\)}
\end{algorithmic}
\end{algorithm*}

\end{remark}

\subsection{Global convergence of Algorithm~\ref{alg:pmm}}

Similar results to Lemma~\ref{lem:ngk} and Theorem~\ref{th:limitsppn} hold for Algorithm~\ref{alg:pmm}.
\begin{theorem}\label{th:limitspwithoutls}
Suppose that  Assumption~\ref{assume:ncp},~\eqref{eq:dishfqk}, and~\eqref{eq:qsketa} are satisfied. Let \(\{x^k\}\) and \(\{y^k\}\) be the sequences generated by Algorithm~\ref{alg:pmm}.  
Then the following statements hold. 
\begin{itemize}
\item[(a)] \(\|\cG_{S_k}(y^k)\| \leq c_1\|x^{k+1} - x^k\|\), where \(c_1 = 1 + L_g + \zeta + \bar{\eta} + \frac{\mu}{2}\). 
\item[(b)] \(\varphi(x^k) - \varphi(x^{k+1}) \geq  \frac{\mu}{2}\|x^{k+1} - x^k\|^2\). 
\item[(c)] \(\lim_{k\to\infty}\|x^{k+1} - x^k\| = 0\) and \(\lim_{k\to\infty}\varphi(x^k) = \varphi_{\xi_{\infty}}^*\) for some \(\varphi_{\xi_{\infty}}^*\in\mathbb{R}\), where \(\xi_{\infty} = \{S_0, S_1, \ldots\}\). 
\item[(d)] \(\lim_{k\to\infty}\mathbb{E}_{\xi_k}[\|x^{k+1} - x^k\|] = 0\) and \(\lim_{k\to\infty}\mathbb{E}_{\xi_{k-1}}[\varphi(x^k)] = \mathbb{E}_{\xi_{\infty}}[\varphi_{\xi_{\infty}}^*]\). 
\item[(e)] Suppose Assumption~\ref{assume:diffs}~\textbf{S1} holds. We have \(\lim_{k\to\infty}\mathbb{E}_{\xi_k}[\|\cG(x^k)\|] = 0\) and
\[
\min_{0\leq k\leq K}\mathbb{E}_{\xi_k}[\|\cG(x^{k})\|^2]  \leq \frac{1}{p_{\min}}\cdot\frac{2c_1^2(\varphi(x^0) - \varphi_*)}{\mu K}.  
\]
\item[(f)] Suppose Assumption~\ref{assume:diffs}~\textbf{S2} holds. We have \(\lim_{k\to\infty}\|\cG(x^k)\| = 0\). 
Let \(\omega(x^0)\) be the cluster points set of \(\{x^k\}\). Then \(\omega(x^0)\subseteq \mathcal{S}^*\) is nonempty and compact. Moreover, 
\[
\min_{0\leq k\leq K}\|\cG(x^{k})\|^2  \leq \frac{1}{c^2}\cdot\frac{2c_1^2(\varphi(x^0) - \varphi_*)}{\mu K}. 
\]
\end{itemize}
\end{theorem}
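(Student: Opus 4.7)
My overall plan is to prove the six parts of Theorem~\ref{th:limitspwithoutls} in order, establishing (a) and (b) directly and then cascading through (c)--(f) by mimicking the templates of Theorems~\ref{th:cfv},~\ref{th:clusterpoint}, and~\ref{th:limitsppn}. For part (a), I would replay the proof of Lemma~\ref{lem:ngk} line by line. The only hypothesis that argument needs on $Q_k$ is $(Q_k)_{S_k}+(\eta_k-\mu)I_{|S_k|}\succeq 0$, which is weaker than our standing assumption $(Q_k)_{S_k}+(\eta_k-\vartheta)I_{|S_k|}\succeq 0$ since $\vartheta\geq 1.1\mu>\mu$. Define $r^k_{S_k}$ as there, apply nonexpansiveness of ${\rm prox}_{g_k}$ to~\eqref{eq:errvek}, invoke monotonicity of $\partial g_k$, and conclude $\|\cG_{S_k}(y^k)\|\leq c_1\|x^{k+1}-x^k\|$ with the same constant $c_1=1+L_g+\zeta+\bar{\eta}+\mu/2$.

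Part (b) is the main novel contribution and the primary obstacle, since it must deliver descent with a unit step size rather than through backtracking. My plan is to begin from~\eqref{eq:nfslc} applied to $d_k:=x^{k+1}-x^k$ (which is supported on $S_k$), giving $f(x^{k+1})\leq f(x^k)+\nabla f(x^k)^\top d_k + \frac{L_{S_k}}{2}\|d_k\|^2$, and to combine it with the inclusion $\varsigma_k-\nabla f(x^k)_{S_k}-((Q_k)_{S_k}+\eta_k I_{|S_k|})(\hat{y}^k-y^k)\in\partial g_k(\hat{y}^k)$ coming from the definition of $\varsigma_k$, together with convexity of $g_k$, to upper-bound $g_k(\hat{y}^k)-g_k(y^k)$. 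After the $\nabla f(x^k)_{S_k}$ contributions cancel one arrives at $\varphi(x^{k+1})-\varphi(x^k)\leq \frac{L_{S_k}}{2}\|d_k\|^2 + \langle\varsigma_k,\hat{y}^k-y^k\rangle - \langle((Q_k)_{S_k}+\eta_k I_{|S_k|})(\hat{y}^k-y^k),\hat{y}^k-y^k\rangle$. The accuracy criterion~\eqref{eq:errvek} bounds the middle term by $\frac{\mu}{2}\|d_k\|^2$. The crucial step is to invoke the second half of~\eqref{eq:qsketa}: since $d_k$ vanishes off $S_k$, one has $\langle(Q_k)_{S_k}(\hat{y}^k-y^k),\hat{y}^k-y^k\rangle = \langle Q_k d_k,d_k\rangle\geq (L_{S_k}+\mu-\eta_k)\|d_k\|^2$, so the full quadratic term is at least $(L_{S_k}+\mu)\|d_k\|^2$. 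The coefficient of $\|d_k\|^2$ thus collapses to $-(L_{S_k}+\mu)/2\leq -\mu/2$, delivering $\varphi(x^k)-\varphi(x^{k+1})\geq \frac{\mu}{2}\|x^{k+1}-x^k\|^2$. This is precisely the mechanism by which~\eqref{eq:qsketa} absorbs the Lipschitz term and eliminates the need for line search.

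Parts (c)--(f) then fall into place. For (c), summing (b) and using that $\varphi$ is bounded below yields $\sum_k\|d_k\|^2<\infty$, hence $\|d_k\|\to 0$, and monotonicity of $\{\varphi(x^k)\}$ forces $\varphi(x^k)\to\varphi^*_{\xi_\infty}$. For (d), I would take conditional expectations in (b) and follow the limiting argument in the proof of Theorem~\ref{th:cfv}, applying \cite[Theorem~5.4]{B95} to pass from $\lim\mathbb{E}_{\xi_{k-1}}[\varphi(x^k)]$ to $\mathbb{E}_{\xi_\infty}[\varphi^*_{\xi_\infty}]$. For (e) and (f), combining (a) with Proposition~\ref{prop:gsg}(ii) under \textbf{S2} gives $\mathbb{E}_{\xi_k}[\|\cG(x^k)\|^2]\leq (c_1^2/p_{\min})\mathbb{E}_{\xi_k}[\|d_k\|^2]$, while~\eqref{eq:s4} under \textbf{S3} gives $\|\cG(x^k)\|^2\leq (c_1^2/c)\|d_k\|^2$; together with (c) and (d) these yield the respective limit statements, and telescoping (b) over $k=0,\dots,K-1$ then dividing produces the stated $\mathcal{O}(1/K)$ minimum-residual rates exactly as in Theorem~\ref{th:limitsppn}. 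The cluster-point assertion in (f) uses continuity of $\cG$ on the bounded level set $\mathcal{L}_\varphi(x^0)$, mirroring Theorem~\ref{th:clusterpoint}(ii).
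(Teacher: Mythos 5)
Your argument is correct, and for everything except part (b) it coincides with what the paper does (the paper's appendix simply points back to Lemma~\ref{lem:ngk}, Theorem~\ref{th:cfv}, Theorem~\ref{th:clusterpoint}, and Theorem~\ref{th:limitsppn} for parts (a) and (c)--(f)). For part (b) you take a slightly different but equally valid route: the paper first invokes Lemma~\ref{lemma:dqkng} to get $\varphi(x^k)\geq q_k(x^{k+1})$ (which packages the inexactness criterion together with strong convexity of the model), and then lower-bounds $q_k(x^{k+1})-\varphi(x^{k+1})$ by $-\tfrac{L_{S_k}}{2}\|d_k\|^2+\tfrac12\langle (Q_k+\eta_kI)d_k,d_k\rangle\geq \tfrac{\mu}{2}\|d_k\|^2$ using the second condition in~\eqref{eq:qsketa}; you instead bypass Lemma~\ref{lemma:dqkng} entirely, feeding the subgradient inclusion for $\hat{y}^k$ into first-order convexity of $g_k$ and letting~\eqref{eq:errvek} control the $\langle\varsigma_k,\hat y^k-y^k\rangle$ term. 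Both proofs turn on the same mechanism ($Q_k+(\eta_k-L_{S_k}-\mu)I\succeq 0$ absorbing the descent-lemma term), and yours in fact yields the marginally sharper decrease $\tfrac{L_{S_k}+\mu}{2}\|d_k\|^2$ because the quadratic enters with coefficient $1$ rather than $\tfrac12$. Two small caveats, both traceable to the paper rather than to you: first, your justification of (a) rests on $\vartheta\geq 1.1\mu>\mu$, but the displayed lower bound on $\vartheta$ only guarantees this when the bracketed maximum is at least $1$ (what the argument actually needs for (a) is merely $(Q_k)_{S_k}+\eta_kI\succeq 0$, which holds for any $\vartheta\geq 0$, though Lemma~\ref{lemma:dqkng} does need $\vartheta\geq\mu$); second, telescoping (b) gives the $\mathcal{O}(1/K)$ rates in (e) and (f) with $\mu$ in the denominator, not the stated $\tau\min\{1,\theta(\mu-\tau)/L_g\}$ — those parameters belong to Algorithm~\ref{alg:pnewton} and appear in the theorem statement only as a carry-over, so your constant is the one actually produced by the argument.
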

\begin{proof}
The proof is given in Appendix~\ref{app:proofsbcpnusz}.
\end{proof}

\subsection{Local convergence for nonconvex problem}

Next, we establish the superlinear local convergence rate of Algorithm~\ref{alg:pmm} for nonconvex Problem~\eqref{eq:ncp}. 
We assume that \(f\) and \(g\) satisfy the following assumption.
\begin{assumption}\label{assume:gold}
\begin{itemize}
\item[(i)] \(f: \mathbb{R}^n\to(-\infty, +\infty]\) is twice continuously differentiable on an open set \(\Omega_2\) containing the effective domain \({\rm dom}g\) of \(g\), \(\nabla f\) is \(L_g\)-Lipschitz continuous over \(\Omega_2\); \(\nabla^2 f\) is \(L_C\)-Lipschitz continuous over an open neighborhood of \(\omega(x^0)\) with radius \(\epsilon_0\) for some \(\epsilon_0 \in (0, 1)\). 
\item[(ii)] \(g: \mathbb{R}^n\to(-\infty, +\infty]\) is block-coordinate separable, that is, \(g\) takes the form of 
\[
g(x) = \sum_{i=1}^{n_l}\psi_i(x_{I_i}),
\]
where \(\cup_{i=1}^{n_l}I_i = [n]\), \(I_i\cap I_j = \emptyset\), \(\forall i\neq j\), \(x_{I_i}\) denotes the subvector of \(x\) indexed by the index set \(I_i\), \(\psi_i: \mathbb{R}^{\vert I_i\vert}\to(-\infty, +\infty]\) is a proper closed convex function, \(\min_{z_i}\{\psi_i(z_i) + \frac{1}{2}\|z_i - u\|^2\}\) is efficiently solvable, and \(0\in {\rm dom}\psi_i\), \(i = 1, \ldots, n_l\). 
\item[(iii)] For any \(x^0 \in {\rm dom}g\), the level set \(\mathcal{L}_{\varphi}(x^0) = \{x\vert \varphi(x) \leq \varphi(x^0)\}\) is bounded. 
\end{itemize}
\end{assumption}
Under Assumption~\ref{assume:gold} (i) and (ii), \(\nabla f(\cdot) + \partial g(\cdot)\) is outer semicontinuous over \({\rm dom}g\)~\cite[Prop. 8.7]{RW04}. Hence, the stationary set \(\mathcal{S}^*\) is closed. We have \(\varphi \equiv \varphi_*:=\lim_{k\to\infty}\varphi(x^k)\) on \(\omega(x^0)\) by the continuity of \(\varphi\). We assume that the residual mapping \(\mathcal{G}\) satisfies the metric \(q\)-subregularity property.
\begin{assumption}\label{assume:errorboundold}
For any \(\bar{x}\in\omega(x^0)\), the metric \(q\)-subregularity at \(\bar{x}\) with \(q > 1\) on \(\mathcal{S}^*\) holds, that is, there exist \(\epsilon \in (0, 1)\) and \(\kappa > 0\) such that 
\[
{\rm dist}(x, \mathcal{S}^*) \leq \kappa \|\mathcal{G}(x)\|^q, \quad \forall x\in\mathbb{B}(\bar{x}, \epsilon). 
\]
\end{assumption}
Assumption~\ref{assume:errorboundold} has been used to analyze the local convergence rate of IPNMs~\cite{MYZZ22,LPWY23,Z24}. For any \(k \in\mathbb{N}\), define \(\bar{y}^k = \argmin_y\{q_{S_k}^k(y)\}\). We first establish the error bound between \(\bar{y}^k\) and \(\hat{y}^k\). 
\begin{lemma}\label{lem:42old}
Suppose that~\eqref{eq:qsketa} is satisfied. Let \(\{\hat{y}^k\}\) be the sequence generated by Algorithm~\ref{alg:pmm}. Then we have  
\begin{equation*}
\|\hat{y}^k - \bar{y}^k\| \leq \frac{(1 + \bar{\eta} + \zeta + L_g)\mu}{2\vartheta}\|x^{k+1} - x^k\|, \quad \forall k\in\mathbb{N}.
\end{equation*}
\end{lemma}
\begin{proof}
By the definition of \(\bar{y}^k\) and using the first-order optimality condition, we have 
\begin{equation}\label{eq:optbarxold}
-\nabla f(x^k)_{S_k} - (Q_k)_{S_k}(\bar{y}^k - y^k) - \eta_k(\bar{y}^k - y^k) \in \partial \tilde{g}_k(\bar{y}). 
\end{equation}
Combining with~\eqref{eq:ek2}, using the monotonicity of \(\partial \tilde{g}_k\), we have 
\begin{align*}
0 \leq & \langle \hat{y}^k - r_{S_k}^k(\hat{y}^k) - \bar{y}^k, r_{S_k}^k(\hat{y}^k) + ((Q_k)_{S_k} + \eta_k I_{\vert S_k\vert})(\bar{y}^k - \hat{y}^k)\rangle\\
\leq \!&-\!\langle\! (\!(Q_k)_{S_k} \!+\! (1 \!+\! \eta_k) I_{\vert S_k\vert})(\bar{y}^k \!-\! \hat{y}^k), r_{S_k}^k(\hat{y}^k)\rangle \!+\! \langle \hat{y}^k \!-\! \bar{y}^k,  ((Q_k)_{S_k} \!+\! \eta_k I_{\vert S_k\vert})(\bar{y}^k \!-\! \hat{y}^k)\rangle. 
\end{align*}
From~\eqref{eq:qsketa} and Cauchy inequality, we have 
\[
\vartheta \|\hat{y}^k - \bar{y}^k\| \leq (1 + \bar{\eta} + L_g + \zeta)\|r_{S_k}^k(\hat{y}^k)\| \leq \frac{(1 + \bar{\eta} + \zeta + L_g)\mu}{2}\|d_k\|,
\]
where the last inequality follows from~\eqref{eq:evare} with \(\hat{\epsilon}_k = -\varsigma_k\). The statement holds. 
\end{proof}
Next, we estimate the error bound between \(y^k\) and \(\bar{y}^k\) in terms of \({\rm dist}(x^k, \mathcal{S}^*)\). 

\begin{lemma}\label{lemma:43old}
Consider any \(\bar{x}\in\omega(x^0)\). Suppose that Assumption~\ref{assume:gold} and~\eqref{eq:qsketa} are satisfied.
Let \(\{x^k\}\) and \(\{\hat{y}^k\}\) be the sequence generated by Algorithm~\ref{alg:pmm}.  Then, for all \(x^k \in \mathbb{B}(\bar{x}, \epsilon_0/2)\), we have
\[
\|y^k - \bar{y}^k\| \leq  \frac{L_C}{\vartheta}{\rm dist}^2(x^k, \mathcal{S}^*) + (1 + \frac{\zeta + \bar{\eta}}{\vartheta}){\rm dist}(x^k, \mathcal{S}^*).
\]
\end{lemma}
\begin{proof}
For any \(x^k\in\mathbb{B}(\bar{x}, \epsilon_0/2)\), let \(\Pi_{\mathcal{S}^*}(x^k)\)
be the projection set of \(x^k\) onto \(\mathcal{S^*}\). Then \(\Pi_{\mathcal{S}^*}(x^k) \neq \emptyset\) since \(\mathcal{S}^*\) is closed. Pick \(x^{k, *}\in \Pi_{\mathcal{S}^*}(x^k)\). Notice that \(\bar{x} \in\omega(x^0)\subseteq \mathcal{S}^*\), we have 
\[
\|x^{k, *} -\bar{x}\| \leq \|x^{k, *} - x^k\| + \|x^k - \bar{x}\| \leq 2\|x^k - \bar{x}\| \leq \epsilon_0,
\]
which implies that \(x^{k, *}\in\mathbb{B}(\bar{x}, \epsilon_0)\). Hence, \((1- t)x^k + tx^{k, *}\in\mathbb{B}(\bar{x}, \epsilon_0)\cap {\rm dom}g\) for all \(t\in[0,1]\). Notice that \(x^{k, *} \in\mathcal{S}^*\), we have \(-\nabla f(x^{k, *}) \in \partial g(x^{k, *})\). Moreover, \(-\nabla f(x^{k, *})_{S_k} \in \partial \tilde{g}_k(x^{k, *}_{S_k})\) under Assumption~\ref{assume:gold} (ii). Combined with~\eqref{eq:optbarxold}, using the monotonicity of \(\partial \tilde{g}_k\), we have 
\begin{align*}
0 \leq& \langle x^{k, *}_{S_k} - \bar{y}^k, -\nabla f(x^{k, *})_{S_k} + \nabla f(x^k)_{S_k} + ((Q_k)_{S_k} + \eta_kI_{\vert S_k\vert})(\bar{y}^k - y^k)\rangle\\
=& \langle x^{k, *}_{S_k} - \bar{y}^k, -\nabla f(x^{k, *})_{S_k} + \nabla f(x^k)_{S_k} + ((Q_k)_{S_k} + \eta_kI_{\vert S_k\vert})(x_{S_k}^{k, *} - y^k)\rangle\\
&+\langle x^{k, *}_{S_k} - \bar{y}^k, ((Q_k)_{S_k} + \eta_kI_{\vert S_k\vert})(\bar{y}^k - x_{S_k}^{k, *})\rangle.
\end{align*}
By \eqref{eq:qsketa} and Cauchy inequality, we have 
\begin{align*}
\|x^{k, *}_{S_k} - \bar{y}^k\| \leq& \frac{1}{\vartheta}\| \nabla f(x^k)_{S_k} - \nabla f(x^{k, *})_{S_k} + ((Q_k)_{S_k} + \eta_kI_{\vert S_k\vert})(x_{S_k}^{k, *} - y^k)\|\\
=& \frac{1}{\vartheta}\|E_k^\top\int_0^1[Q_k + \eta_kI_n - \nabla^2f(x^k + t(x^{k, *} - x^k))](x^{k, *} - x^k)_{[S_k]}dt\|\\
\leq& \frac{1}{\vartheta}\|\int_0^1[Q_k + \eta_kI_n - \nabla^2f(x^k + t(x^{k, *} - x^k))](x^{k, *} - x^k)_{[S_k]}dt\|\\
\leq& \frac{1}{\vartheta}\|\int_0^1[\nabla^2f(x^k) - \nabla^2f(x^k + t(x^{k, *} - x^k))](x^{k, *} - x^k)_{[S_k]}dt\| \\
&+ \frac{1}{\vartheta}\|\int_0^1[Q_k - \nabla^2f(x^k) + \eta_kI_n](x^{k, *} - x^k)_{[S_k]}dt\|\\
\leq& \frac{L_C}{2\vartheta}\|x^{k, *} - x^k\|^2 + \frac{\zeta + \bar{\eta}}{\vartheta}\|x^{k, *} - x^k\|,
\end{align*}
where \(E_k\in\mathbb{R}^{n\times \vert S_k\vert}\) is the column submatrix of \(I_n\) that corresponds to \(S_k\) and the last inequality follows from Assumption~\ref{assume:gold} (i), \(\|(x^{k, *} - x^k)_{[S_k]}\| \leq \|x^{k, *} - x^k\|\),  and~\eqref{eq:dishfqk}. Therefore, 
\begin{align*}
\|y^k - \bar{y}^k\| \leq& \|y^k - x^{k, *}_{S_k} \| + \|x^{k, *}_{S_k} - \bar{y}^k\|\\
\leq & \|x^k - x^{k, *}\| + \frac{L_C}{\vartheta}\|x^{k, *} - x^k\|^2 + \frac{\zeta + \bar{\eta}}{\vartheta}\|x^{k, *} - x^k\|\\
\leq& \frac{L_C}{\vartheta}\|x^k - x^{k, *}\|^2 + (1 + \frac{\zeta + \bar{\eta}}{\vartheta})\|x^k - x^{k, *}\|. 
\end{align*}
The statement holds. 
\end{proof}
Invoking Lemmas~\ref{lem:42old} and \ref{lemma:43old}, for all \(x_k\in\mathbb{B}(\bar{x}, \epsilon_0/2)\), we have
\begin{align*}
\|x^{k+1} - x^k\| =& \|\hat{y}^k - y^k\| \leq \|\hat{y}^k - \bar{y}^k\| + \|\bar{y}^k - y^k\|\\
\leq& \frac{(1 \!+\! \bar{\eta} \!+\! \zeta \!+\! L_g)\mu}{2\vartheta}\|x^{k+1} \!-\! x^k\| \!+\! \frac{L_C}{\vartheta}{\rm dist}^2(x^k\!, \mathcal{S}^*) \!+\! (1 \!+\! \frac{\zeta \!+\! \bar{\eta}}{\vartheta}){\rm dist}(x^k\!, \mathcal{S}^*),
\end{align*}
which yields 
\begin{equation}\label{eq:ndkold}
\|x^{k+1} - x^k\| \leq \frac{2L_C}{\tilde{\tilde{\eta}}}{\rm dist}^2(x^k, \mathcal{S}^*) + \frac{2(\vartheta + \zeta + \bar{\eta})}{\tilde{\tilde{\eta}}}{\rm dist}(x^k, \mathcal{S}^*),
\end{equation}
where \(\tilde{\tilde{\eta}} = 2\vartheta - \mu(1 + \bar{\eta} + \zeta + L_g) \geq 2\vartheta - (1 + 2\zeta + 2L_g + \vartheta) > 0\). Therefore, \(\|x^{k+1} - x^k\| = \mathcal{O}({\rm dist}(x^k, \mathcal{S}^*))\).

\begin{theorem}\label{th:lcrxkold}
Suppose that Assumptions~\ref{assume:diffs} \textbf{S2},~\ref{assume:gold}, and~\ref{assume:errorboundold}, the boundedness~\eqref{eq:dishfqk}, and~\eqref{eq:qsketa} are satisfied. Let \(\{x^k\}\) be the sequence generated by Algorithm~\ref{alg:pmm}. 
Then for any \(\bar{x}\in\omega(x^0)\), \(\{x^k\}\) converges to \(\bar{x}\) with order \(q\). 
\end{theorem}
\begin{proof}
Recall that \(\lim_{k\to\infty}\|\mathcal{G}(x^k)\| = 0\) under Assumptions~\ref{assume:diffs} \textbf{S2}. Combine with Assumption~\ref{assume:errorboundold}, and~\eqref{eq:ndkold}, we know there exists \(\hat{k}\in\mathbb{N}\), such that for all \(k \!\geq\! \hat{k}\), \(\|\mathcal{G}(x^k)\| \!\leq\! 1\), \(\|x^{k+1} - x^k\| \!\leq\! c_6{\rm dist}(x^k, \mathcal{S}^*)\) for some \(c_6 > 0\) if \(x^k \in\mathbb{B}(\bar{x}, \epsilon_1)\) with \(\epsilon_1 = \min\{\epsilon, \epsilon_0/2\}\). 

We first show that for all \(k \geq \hat{k}\), if \(x^k \in \mathbb{B}(\bar{x}, \epsilon_1)\), then 
\begin{equation}\label{eq:disxk1old}
{\rm dist}(x^{k+1}, S^*) = \mathcal{O}({\rm dist}^q(x^{k}, S^*)).
\end{equation}
Under Assumptions~\ref{assume:diffs} \textbf{S2} and~\ref{assume:errorboundold}, we have 
\begin{align*}
{\rm dist}(x^{k+1}, \mathcal{S}^*) \leq& \kappa \|\mathcal{G}(x^{k+1})\|^q \leq  \kappa c^{-q/2}\|\mathcal{G}(x^{k+1})_{[S_{k+1}]}\|^q.
\end{align*}
Let \(r^k(x) = x - {\rm prox}_g(x - \nabla f(x^k) - (Q_k + \eta_k I)(x - x^k))\), \(\forall k \in\mathbb{N}\). 
Then it follows from Assumption~\ref{assume:gold}(ii) that \(r^k_{S_k}(y) = r^k(x)_{S_k}\) for any \(y = x_{S_k}\) and \(y^k = x^k_{S_k}\). Notice that 
\begin{align*}
\|\mathcal{G}(x^{k+1})_{[S_{k+1}]}\| =& \|\mathcal{G}(x^{k+1})_{[S_{k+1}]}\| - \|r^k(x^{k+1})_{[S_k]}\| + \|r_{S_k}^k(\hat{y}^k)\|\\
\leq & \|\mathcal{G}(x^{k+1})_{[S_{k+1}]} - r^k(x^{k+1})_{[S_k]}\| + \frac{\mu}{2}\|x^{k+1} - x^k\|\\
\leq& \|\mathcal{G}(x^{k+1})_{[S_{k+1}]} - \mathcal{G}(x^{k})\| + \|\mathcal{G}(x^{k}) - \mathcal{G}(x^{k})_{[S_{k}]}\| \\
&+ \|\mathcal{G}(x^{k})_{[S_{k}]}- r^k(x^{k+1})_{[S_k]}\| + \frac{\mu}{2}\|x^{k+1} - x^k\|. 
\end{align*}
For all \(k \geq \hat{k}\), if \(x^k\in\mathbb{B}(\bar{x}, \epsilon_1)\), then we have 
\begin{align*}
 &   \|\mathcal{G}(x^{k+1})_{[S_{k+1}]} - \mathcal{G}(x^{k})\| \\
    =&\|\mathcal{G}(x^{k+1})_{[S_{k+1}]} - \mathcal{G}(x^k)_{[S_{k+1}]} + \mathcal{G}(x^k)_{[S_{k+1}]} - \mathcal{G}(x^{k})\|\\
    \leq& \|\mathcal{G}_{S_{k+1}}(x^{k+1}_{S_{k+1}}) - \mathcal{G}_{S_{k+1}}(x^k_{S_{k+1}})\| + \|\mathcal{G}(x^{k})_{[S_{k+1}]} - \mathcal{G}(x^{k})\|\\
    \leq& \|x^{k+1}_{S_{k+1}} - x^k_{S_{k+1}}\| + \|x^{k+1}_{S_{k+1}} - \nabla f(x^{k+1})_{S_{k+1}} - x^k_{S_{k+1}} + \nabla f(x^{k})_{S_{k+1}}\| + \|\mathcal{G}(x^{k})\|\\
    \leq& 2\|x^{k+1}_{S_{k+1}} - x^k_{S_{k+1}}\| + \|\nabla f(x^{k+1})_{S_{k+1}} - \nabla f(x^{k})_{S_{k+1}}\| + \|\mathcal{G}(x^{k})\|\\
    \leq& (2 + L_g)\|x^{k+1} - x^k\| + \|\mathcal{G}(x^{k})\|,
    \end{align*}
    where the second inequality follows from the definition of \(\mathcal{G}_{S_{k+1}}(\cdot)\), the nonexpansivity of \({\rm prox}_{g_{k+1}}\), and the fact \(\|\mathcal{G}(x^{k})_{[S_{k+1}]} - \mathcal{G}(x^{k})\|  \leq \|\mathcal{G}(x^{k})\|\). In addition, 
\begin{align*}
\|\mathcal{G}(x^{k})_{[S_{k}]}-r^k(x^{k+1})_{[S_k]}\| =&\|\mathcal{G}_{S_{k}}(x_{S_k}^{k})-r^k_{S_k}(x_{S_k}^{k+1})\| \leq  (2 + L_g + \zeta + \bar{\eta})\|x^{k}_{S_{k}} - x^{k+1}_{S_k}\|\\
\leq& (2 + L_g + \zeta + \bar{\eta})\|x^{k+1} - x^k\|,
\end{align*}
where the first inequality follows from the definition of \(\mathcal{G}_{S_{k}}(\cdot)\) and \(r^k_{S_{k}}(\cdot)\) and the nonexpansivity of \({\rm prox}_{\tilde{g}_k}\). 
Hence, we have 
\[
\|\mathcal{G}(x^{k+1})_{[S_{k+1}]}\| \leq (4 + 2L_g + \zeta + \bar{\eta} + \frac{\mu}{2})\|x^{k+1} - x^k\| + 2\|\mathcal{G}(x^{k})\|,
\]
which yields 
\begin{align*}
\|\mathcal{G}(x^{k+1})_{[S_{k+1}]}\|^2 \leq& 2((4 + 2L_g + \zeta + \bar{\eta} + \frac{\mu}{2})^2\|x^{k+1} - x^k\|^2 + 4\|\mathcal{G}(x^{k})\|^2)\\
\leq& 2(4 + 2L_g + \zeta + \bar{\eta} + \frac{\mu}{2})^2\|x^{k+1} - x^k\|^2 + 8\frac{c_1^2}{c}\|x^{k+1} - x^k\|^2,
\end{align*}
where the last inequality follows from~\eqref{eq:xd}. 
Therefore, 
\begin{align*}
{\rm dist}(x^{k+1}, \mathcal{S}^*)\leq& \kappa c^{-q/2}\|\mathcal{G}(x^{k+1})_{[S_{k+1}]}\|^q\\
\leq& \kappa c^{-q/2}2^{q/2}((4 + 2L_g + \zeta + \bar{\eta} + \frac{\mu}{2})^2 + 4\frac{c_1^2}{c})^{q/2}\|x^{k+1} - x^k\|^q\\
\leq& \kappa c^{-q/2}2^{q/2}((4 + 2L_g  + \zeta + \bar{\eta} + \frac{\mu}{2})^2 + 4\frac{c_1^2}{c})^{q/2}c_6^q{\rm dist}^q(x^k, \mathcal{S}^*), 
\end{align*}
which yields~\eqref{eq:disxk1old}. 

Recall that \(\lim_{k\to\infty}{\rm dist}(x^k, \mathcal{S}^*) = 0\) under Assumption~\ref{assume:errorboundold} and Theorem~\ref{th:limitspwithoutls} (f). 
By~\eqref{eq:disxk1old}, for any \(c_7\in(0, 1)\), there exist \(\epsilon_2\in(0, \epsilon_1)\) and  \(\tilde{k} \geq \hat{k}\), such that for all \(k \geq \tilde{k}\), if \(x^k \in \mathbb{B}(\bar{x}, \epsilon_2)\), then we have
\[
{\rm dist}(x^{k+1}, \mathcal{S}^*) \leq c_7{\rm dist}(x^k, \mathcal{S}^*). 
\]
Define \(\bar{\epsilon} = \min\{\frac{\epsilon_2}{2}, \frac{(1 - c_7)\epsilon_2}{2c_6}\}\). Next, we show that if \(x^{k_0} \in \mathbb{B}(\bar{x}, \bar{\epsilon})\) for some \(k_0 \geq \tilde{k}\), then \(x^{k + 1} \in \mathbb{B}(\bar{x}, \epsilon_2)\) for all \(k \geq k_0\) by induction. 

Notice that \(\bar{x} \in \omega(x^0)\), there exists \(k_0 \geq \tilde{k}\), such that \(x^{k_0} \in \mathbb{B}(\bar{x}, \bar{\epsilon})\). Therefore, 
\begin{align*}
\|x^{k_0+1} - \bar{x}\| \leq \|x^{k_0} - \bar{x}\| + \|x^{k_0} - x^{k_0 + 1}\| \leq  \|x^{k_0} - \bar{x}\| + c_6{\rm dist}(x^{k_0}, S^*) \leq (1 + c_6)\bar{\epsilon} \leq \epsilon_2,
\end{align*}
which implies \(x^{k_0+1} \in \mathbb{B}(\bar{x}, \epsilon_2)\). For any \(k > k_0\), suppose that for all \(k_0 \leq l \leq k-1\), we have \(x^{k+1} \in \mathbb{B}(\bar{x}, \epsilon_2)\). Then we have 
\begin{align*}
\|x^{k+1} \!-\! x^{k_0}\| \!\leq\! \sum_{l = k_0}^k\!\|x^{l+1} \!-\! x^l\| \!\leq\! c_6\!\sum_{l = k_0}^k\!{\rm dist}(x^l, S^*) \!\leq\! c_6\!\sum_{l = k_0}^k\!c_7^{l - k_0}{\rm dist}(x^{k_0}, \mathcal{S}^*) \!\leq\! \frac{c_6}{1 \!-\! c_7}\|x^{k_0} \!-\! \bar{x}\|. 
\end{align*}
Therefore, \(\|x^{k + 1} - \bar{x}\| \leq \|x^{k+1} - x^{k_0}\| + \|x^{k_0} - \bar{x}\| \leq (1 + \frac{c_6}{1 - c_7})\|x^{k_0} - \bar{x}\| \leq (1 + \frac{c_6}{1 - c_7})\bar{\epsilon} \leq \epsilon_2\). Hence, \(x^{k + 1} \in \mathbb{B}(\bar{x}, \epsilon_2)\). 

Notice that for any \(\epsilon > 0\), there exists \(\bar{\bar{k}} \geq k_0\), such that 
\[
{\rm dist}(x^k, \mathcal{S}^*) < \tilde{\epsilon}, \quad \forall k > \bar{\bar{k}},
\]
where \(\tilde{\epsilon} = \frac{1 - c_7}{c_6}\epsilon\). For any \(k_1, k_2 > \bar{\bar{k}}\), 
without loss of generality we assume \(k_1 > k_2\), the following inequality holds:
\begin{align*}
\|x^{k_1} - x^{k_2}\|\leq& \!\sum_{j=k_2}^{k_1 - 1}\|x^{j+1} - x^j\| \leq c_6\!\sum_{j=k_2}^{k_1 - 1}{\rm dist}(x^j, \mathcal{S}^*) \leq c_6\!\sum_{j=k_2}^{k_1 - 1}c_7^{j - k_2}{\rm dist}(x^{k_2}, \mathcal{S}^*)\\
\leq&  \frac{c_6}{1 - c_7}{\rm dist}(x^{k_2}, \mathcal{S}^*) <  \frac{c_6}{1 - c_7}\tilde{\epsilon} = \epsilon.
\end{align*}
Hence, \(\{x^k\}_{k\in\mathbb{N}}\) is a Cauchy sequence. Recall that the cluster point set \(\omega(x^0)\) of \(\{x^k\}_{k\in\mathbb{N}}\) is closed. We have \(\{x^k\}_{k\in\mathbb{N}}\) converges to some \(\bar{x}\in\omega(x^0)\). By setting \(k_2 = k + 1\) and passing the limit \(k_1\to\infty\), we have for any \(k > \bar{\bar{k}}\), 
\[
\|x^{k\!+\!1} - \bar{x}\| \leq \frac{c_6}{1 - c_7}{\rm dist}(x^{k+1}, \mathcal{S}^*)\leq \frac{c_6c_8}{1 - c_7}{\rm dist}^q(x^k, \mathcal{S}^*)\leq \frac{c_6c_8}{1 - c_7}\|x^k - \bar{x}\|^q,
\]
where \(c_8 = \kappa c^{-q/2}2^{q/2}((4 + 2L_g + \zeta + \bar{\eta} + \frac{\mu}{2})^2 + 4\frac{c_1^2}{c})^{q/2}c_6^q\). Therefore, \(\{x^k\}_{k\in\mathbb{N}}\) converges to \(\bar{x}\) with the Q-superlinear rate of order \(q\). 
\end{proof}

A briefly comparison between Theorem~\ref{th:lcrxkold} and Assumption~\ref{assume:gold} with existing local convergence results for IPNM proposed in~\cite{Z24}. The main novelty of our local convergence analysis lies in the extension to the stochastic block‑coordinate setting, rather than in achieving a sharper convergence order.

\subsection{Local convergence for convex problem}

Next, we establish the superlinear/quadratic local convergence rate of sequences \(\{\|\mathcal{G}(x^k)\|\}\) and \(\{x^k\}\) generated by Algorithm~\ref{alg:pmm} for convex composite problems under particular parameter settings. We assume that the order-\(q\) H\"olderian error bound condition holds locally to \(\mathcal{S}^*\). 

\begin{assumption}\label{assume:errorbound}
For any \(\bar{x}\in\omega(x^0)\), the order-\(q\) H\"olderian error bound condition at \(\bar{x}\) on \(\mathcal{S}^*\) holds, that is, there exist \(q\in (0, 1]\), \(\epsilon \in (0, 1)\), and \(\kappa > 0\) such that 
\[
{\rm dist}(x, \mathcal{S}^*) \leq \kappa \|\mathcal{G}(x)\|^q, \quad \forall x\in\mathbb{B}(\bar{x}, \epsilon). 
\]
\end{assumption}

We also specify the parameters in Algorithm~\ref{alg:pmm}. 
We replace~\eqref{eq:errvek} and ~\eqref{eq:dishfqk} by
\begin{equation}\label{eq:newqk}
\begin{aligned}
&\|\varsigma_k\| \leq \frac{\mu_0}{2}\min\{1, \|\cG(x^k)\|^{\theta}\}\|\hat{y}^k - y^k\|,\\
&\|\nabla^2f(x^k) - Q_k\|\leq \zeta\min\{1, \|\cG(x^k)\|^{\theta}\}, 
\end{aligned}\quad \quad \forall x^k\in \mathbb{B}(\bar{x}, \epsilon),~\forall \bar{x} \in \omega(x^0), 
\end{equation}
for some \(\theta \in[0, 1]\)\footnote{Inequality in~\eqref{eq:newqk} on \(Q_k\) is well-defined since \(f\) is assumed to be convex. The results presented in subsequence still hold if we assume that \(\nabla^2f(\bar{x}) \succ 0\) for any \(\bar{x}\in \mathcal{S}^*\) instead of assuming that f is convex. }. 
Moreover, we set \(\eta_k = \eta\min\{1, \|\mathcal{G}(x^k)\|^{\theta}\}\), \(\forall k\in\mathbb{N}\), \((\mu_0, \eta, \zeta)\) satisfies \(2\eta - \mu_0(1 + \eta + L_g + \zeta) > 0\) and \(\eta  - L_g - \mu_0 > 0\). 

Similarly to Lemmas~\ref{lem:42old} and~\ref{lemma:43old}, we first establish the error between \(\bar{y}^k\) and \(\hat{y}^k\), as well as the error between \(y^k\) and \(\bar{y}^k\).
 
\begin{lemma}\label{lem:42}
Suppose that \(f\) is convex and~\eqref{eq:newqk} is satisfied. Let \(\{\hat{y}^k\}\) be the sequence generated by Algorithm~\ref{alg:pmm}. Then, for any \(x^k \in\mathbb{B}(\bar{x}, {\epsilon})\), we have  
\begin{equation*}
\|\hat{y}^k - \bar{y}^k\| \leq \frac{\mu_0(1 + \eta + L_g + \zeta)}{2\eta}\|x^{k+1} - x^k\|.
\end{equation*}
\end{lemma}
\begin{proof}
By the definition of \(\bar{y}^k\) and using the first-order optimality condition, we have 
\begin{equation}\label{eq:optbarx}
-\nabla f(x^k)_{S_k} - (Q_k)_{S_k}(\bar{y}^k - y^k) - \eta_k(\bar{y}^k - y^k) \in \partial \tilde{g}_k(\bar{y}). 
\end{equation}
Combining with~\eqref{eq:ek2}, using the monotonicity of \(\partial \tilde{g}_k\), we have 
\begin{align*}
0 \leq & \langle \hat{y}^k - r_{S_k}^k(\hat{y}^k) - \bar{y}^k, r_{S_k}^k(\hat{y}^k) + ((Q_k)_{S_k} + \eta_k I_{\vert S_k\vert})(\bar{y}^k - \hat{y}^k)\rangle\\
\leq &-\!\langle ((Q_k)_{S_k} \!+\! (1 \!+\! \eta_k) I_{\vert S_k\vert})(\bar{y}^k \!-\! \hat{y}^k), r_{S_k}^k(\hat{y}^k)\rangle \!+\! \langle \hat{y}^k \!-\! \bar{y}^k,  ((Q_k)_{S_k} \!+\! \eta_k I_{\vert S_k\vert})(\bar{y}^k \!-\! \hat{y}^k)\rangle. 
\end{align*}
From~\eqref{eq:newqk} and Cauchy inequality, we have 
\begin{align*}
\eta_k \|\hat{y}^k - \bar{y}^k\| \leq& (1 + \eta_k + L_g + \zeta\min\{1, \|\cG(x^k)\|^{\theta}\})\|r_{S_k}^k(\hat{y}^k)\| \\
\leq& \frac{\mu_0(1 + \eta + L_g + \zeta)}{2}\min\{1, \|\mathcal{G}(x^k)\|^{\theta}\}\|d_k\|,
\end{align*}
where the last inequality follows from~\eqref{eq:evare} with \(\hat{\epsilon}_k = -\varsigma_k\) and~\eqref{eq:newqk}. The statement holds. 
\end{proof}

\begin{lemma}\label{lemma:43}
Consider any \(\bar{x}\in\omega(x^0)\). Suppose that \(f\) is convex and  Assumption~\ref{assume:gold} and~\eqref{eq:newqk} are satisfied.
Let \(\{x^k\}\) and \(\{\hat{y}^k\}\) be the sequence generated by Algorithm~\ref{alg:pmm}.  Then, for all \(x^k \in \mathbb{B}(\bar{x}, {\epsilon}_0/2)\), we have
\[
\|y^k - \bar{y}^k\| \leq  \frac{L_C}{2\eta\min\{1, \|\mathcal{G}(x^k)\|^{\theta}\}}{\rm dist}^2(x^k, \mathcal{S}^*) + (2 + \frac{\zeta}{\eta}){\rm dist}(x^k, \mathcal{S}^*).
\]
\end{lemma}
\begin{proof}
For any \(x^k\in\mathbb{B}(\bar{x}, \epsilon_0/2)\), let \(\Pi_{\mathcal{S}^*}(x^k)\)
be the projection set of \(x^k\) onto \(\mathcal{S^*}\). Then \(\Pi_{\mathcal{S}^*}(x^k) \neq \emptyset\) since \(\mathcal{S}^*\) is closed. Pick \(x^{k, *}\in \Pi_{\mathcal{S}^*}(x^k)\). Notice that \(\bar{x} \in\omega(x^0)\subseteq \mathcal{S}^*\), we have 
\[
\|x^{k, *} -\bar{x}\| \leq \|x^{k, *} - x^k\| + \|x^k - \bar{x}\| \leq 2\|x^k - \bar{x}\| \leq \epsilon_0,
\]
which implies that \(x^{k, *}\in\mathbb{B}(\bar{x}, \epsilon_0)\). Hence, \((1- t)x^k + tx^{k, *}\in\mathbb{B}(\bar{x}, \epsilon_0)\cap {\rm dom}g\) for all \(t\in[0,1]\). Notice that \(x^{k, *} \in\mathcal{S}^*\), we have \(-\nabla f(x^{k, *}) \in \partial g(x^{k, *})\). Moreover, \(-\nabla f(x^{k, *})_{S_k} \in \partial \tilde{g}_k(x^{k, *}_{S_k})\) under Assumption~\ref{assume:gold} (ii). By~\eqref{eq:optbarx} and the monotonicity of \(\partial \tilde{g}_k\), it follows that 
\begin{align*}
0 \leq& \langle x^{k, *}_{S_k} - \bar{y}^k, -\nabla f(x^{k, *})_{S_k} + \nabla f(x^k)_{S_k} + ((Q_k)_{S_k} + \eta_kI_{\vert S_k\vert})(\bar{y}^k - y^k)\rangle\\
=& \langle x^{k, *}_{S_k} - \bar{y}^k, -\nabla f(x^{k, *})_{S_k} + \nabla f(x^k)_{S_k} + ((Q_k)_{S_k} + \eta_kI_{\vert S_k\vert})(x_{S_k}^{k, *} - y^k)\rangle\\
&+\langle x^{k, *}_{S_k} - \bar{y}^k, ((Q_k)_{S_k} + \eta_kI_{\vert S_k\vert})(\bar{y}^k - x_{S_k}^{k, *})\rangle.
\end{align*}
By \eqref{eq:newqk} and Cauchy inequality, we have 
\begin{align*}
\|x^{k, *}_{S_k} - \bar{y}^k\| \leq& \frac{1}{\eta_k}\| \nabla f(x^k)_{S_k} - \nabla f(x^{k, *})_{S_k} + ((Q_k)_{S_k} + \eta_kI_{\vert S_k\vert})(x_{S_k}^{k, *} - y^k)\|\\
=& \frac{1}{\eta_k}\|E_k^\top\int_0^1[Q_k + \eta_kI_n - \nabla^2f(x^k + t(x^{k, *} - x^k))](x^{k, *} - x^k)_{[S_k]}dt\|\\
\leq& \frac{1}{\eta_k}\|\int_0^1[Q_k + \eta_kI_n - \nabla^2f(x^k + t(x^{k, *} - x^k))](x^{k, *} - x^k)_{[S_k]}dt\|\\
\leq& \frac{1}{\eta_k}\|\int_0^1[\nabla^2f(x^k) - \nabla^2f(x^k + t(x^{k, *} - x^k))](x^{k, *} - x^k)_{[S_k]}dt\| \\
&+ \frac{1}{\eta_k}\|\int_0^1[Q_k - \nabla^2f(x^k) + \eta_kI_n](x^{k, *} - x^k)_{[S_k]}dt\|\\
=&\frac{L_C}{2\eta_k}\|x^{k, *} - x^k\|^2 + \frac{\zeta + \eta}{\eta}\|x^{k, *} - x^k\|,
\end{align*}
where \(E_k\in\mathbb{R}^{n\times \vert S_k\vert}\) is the column submatrix of \(I_n\) that corresponds to \(S_k\) and the last inequality follows from Assumption~\ref{assume:gold} (i), \(\|(x^{k, *} - x^k)_{[S_k]}\| \leq \|x^{k, *} - x^k\|\),  and~\eqref{eq:newqk}. Therefore, 
\begin{align*}
\|y^k - \bar{y}^k\| \leq& \|y^k - x^{k, *}_{S_k} \| + \|x^{k, *}_{S_k} - \bar{y}^k\|\\
\leq & \|x^k - x^{k, *}\| + \frac{L_C}{2\eta_k}\|x^{k, *} - x^k\|^2 + \frac{\zeta + \eta}{\eta}\|x^{k, *} - x^k\|\\
\leq& \frac{L_C}{2\eta_k}\|x^k - x^{k, *}\|^2 + (2 + \frac{\zeta}{\eta})\|x^k - x^{k, *}\|. 
\end{align*}
The statement holds. 
\end{proof}

By invoking Lemmas~\ref{lem:42} and \ref{lemma:43}, for all \(x^k\in\mathbb{B}(\bar{x}, \epsilon_0/2)\), we have
\begin{align*}
\|x^{k+1} \!- x^k\| \!=& \|\hat{y}^k - y^k\| \leq \|\hat{y}^k - \bar{y}^k\| + \|\bar{y}^k - y^k\|\\
\leq& \frac{\mu_0(1 \!+\! \eta \!+\! L_g \!+\! \zeta)}{2\eta}\|x^{k+1} \!-\! x^k\| \!+\! \frac{L_C}{2\eta_k}{\rm dist}^2(x^k, \mathcal{S}^*) \!+\! (2 \!+\! \frac{\zeta}{\eta}){\rm dist}(x^k, \mathcal{S}^*),
\end{align*}
which yields
\begin{align}\label{eq:ndk}
\|x^{k+1} - x^k\| \leq& \frac{L_C}{(2\eta - \mu_0(1 + \eta + L_g + \zeta))\min\{1, \|\mathcal{G}(x^k)\|^{\theta}\}}{\rm dist}^2(x^k, \mathcal{S}^*)  \nonumber \\
&+ \frac{2(2\eta + \zeta)}{2\eta - \mu_0(1 + \eta + L_g + \zeta)}{\rm dist}(x^k, \mathcal{S}^*).
\end{align}

\begin{theorem}\label{th:lcrxk}
Suppose that \(f\) is convex,  Assumptions~\ref{assume:diffs} \textbf{S2},~\ref{assume:gold}, and~\ref{assume:errorbound}, and~\eqref{eq:newqk} are satisfied with \(\theta \in [0, q]\) and \(q(1 + \theta) > 1\). Let \(\{x^k\}\) be the sequence generated by Algorithm~\ref{alg:pmm}. 
Then for any \(\bar{x}\in\omega(x^0)\), the sequences \(\{\|\mathcal{G}(x^k)\|\}\) and \(\{x^k\}\) converges to \(0\) and \(\bar{x}\) with the same convergence rate \(q(1 + \theta)\). 
\end{theorem}
\begin{proof}
From Theorem~\ref{th:limitspwithoutls} (f) and Assumption~\ref{assume:errorbound}, we have \(\lim_{k\to\infty}{\rm dist}(x^k, \mathcal{S}^*) = \lim_{k\to\infty}\|\mathcal{G}(x^k)\| = 0\). Hence, there exists \(\hat{k}\in\mathbb{N}\), such that for all \(k \geq \hat{k}\), \({\rm dist}(x^k, \mathcal{S}^*) < 1\) and \(\|\mathcal{G}(x^k)\| \leq 1\). 
From Assumption~\ref{assume:errorbound} and~\eqref{eq:ndk}, for any \(x^k\in \mathbb{B}(\bar{x}, \epsilon_1)\) with \(k \geq \hat{k}\) and \(\epsilon_1 = {\epsilon}_0/2\), we have 
\begin{align}\label{eq:dxkk}
\|x^{k+1} - x^k\| \leq& \frac{L_C}{2\eta - \mu_0(1 + \eta + L_g + \zeta)}\|\mathcal{G}(x^k)\|^{q - \theta}{\rm dist}(x^k, \mathcal{S}^*)  \nonumber  \\
&+ \frac{2(2\eta + \zeta)}{2\eta - \mu_0(1 + \eta + L_g + \zeta)}{\rm dist}(x^k, \mathcal{S}^*),
\end{align}
which implies that 
\begin{subequations}
\begin{align}
\|x^{k+1} - x^k\| \leq& c_6{\rm dist}(x^k, \mathcal{S}^*)  \quad {\rm with}\quad c_6 = \frac{L_C + 2(2\eta + \zeta)}{2\eta - \mu_0(1 + \eta + L_g + \zeta)}; \label{eq:dxkk1}\\
\|x^{k+1} - x^k\| \leq& \tilde{c}_6\|\mathcal{G}(x^k)\|^q \quad\quad {\rm with}\quad \tilde{c}_6 = \kappa c_6. \label{eq:dxkk2}
\end{align}
\end{subequations}

We first show that for all \(k \geq \hat{k}\), if \(x^k \in \mathbb{B}(\bar{x}, \epsilon_1)\), then 
\begin{equation}\label{eq:disxk1}
{\rm dist}(x^{k+1}, S^*) = \mathcal{O}({\rm dist}^{q(1 + \theta)}(x^{k}, S^*)).
\end{equation}
Denote \(\tilde{x}^k := x^{k+1} - r^k({x^{k+1}})\) and \(\tilde{y}^k := \hat{y}^k - r^k_{S_k}(\hat{y}^k)\), where \(r^k(x) = x - {\rm prox}_g(x - \nabla f(x^k) - (Q_k + \eta_k I)(x - x^k))\), \(\forall k \in\mathbb{N}\). From~\eqref{eq:ek2}, we have 
\[
r_{S_k}^k(\hat{y}^k) - \nabla f(x^k)_{S_k} - ((Q_k)_{S_k} + \eta_kI_{\vert S_k\vert})(\tilde{y}^k - y^k + r_{S_k}^k(\hat{y}^k)) \in \partial \tilde{g}_k(\tilde{y}^k).
\]
Let \(\mathcal{S}\mathcal{G}(\tilde{y}^k) :=  \nabla f(\tilde{x}^k)_{S_k} + r_{S_k}^k(\hat{y}^k) - \nabla f(x^k)_{S_k} - ((Q_k)_{S_k} + \eta_kI_{\vert S_k\vert})(\tilde{y}^k - y^k + r_{S_k}^k(\hat{y}^k))\). Then we have  
\[
\mathcal{S}\mathcal{G}(\tilde{y}^k) \in  \nabla f(\tilde{x}^k)_{S_k} + \partial\tilde{g}_k(\tilde{y}^k). 
\]
Moreover,  
\begin{align*}
\|\mathcal{S}\mathcal{G}(\tilde{y}^k)\| \leq &\| \nabla f(\tilde{x}^k) - f(x^k) - \nabla^2f(x^k)(\tilde{x}^k - x^k)\| + \|\nabla^2f(x^k) - Q_k - \eta_kI_n\|\|\tilde{y}^k - y^k\| \\
&+ \|(Q_k)_{S_k} + (1 + \eta_k)I_{\vert S_k\vert}\|\|r_{S_k}^k(\hat{y}^k)\|\\
\leq& \frac{L_C}{2}\|\tilde{y}^k \!-\! y^k\|^2 \!+\! (\zeta \!+\! \eta)\|\mathcal{G}(x^k)\|^{\theta}\|\tilde{y}^k \!-\! y^k\| \!+\! \frac{\mu_0}{2}(1 \!+\! L_g \!+\! \zeta \!+\! \eta)\|\mathcal{G}(x^k)\|^{\theta}\|x^{k+1} \!-\! x^k\|. 
\end{align*}
Notice that 
\begin{align*}
\|\tilde{y}^k - y^k\| \leq& \|\tilde{y}^k - \hat{y}^k\| + \|\hat{y}^k - y^k\|\leq \|r_{S_k}^k(\hat{y}^k)\| + \|x^{k+1} - x^k\| \leq (1+\frac{\mu}{2})\|x^{k+1} - x^k\|.
\end{align*}
We have 
\begin{align*}
\|\mathcal{S}\mathcal{G}(\tilde{y}^k)\| \leq&  \frac{L_C}{2}(1+\frac{\mu}{2})^2\|x^{k+1} - x^k\|^2 
+ c_g\|\mathcal{G}(x^k)\|^{\theta}\|x^{k+1} - x^k\|\\
\overset{\eqref{eq:xd}}{\leq}&  \frac{L_C}{2}(1+\frac{\mu}{2})^2\|x^{k+1} - x^k\|^2 + \frac{c_1^{\theta}}{c^{\theta/2}}c_g\|x^{k+1} - x^k\|^{1 + \theta},
\end{align*}
where \(c_g = (\zeta + \eta)(1+\frac{\mu}{2}) + \frac{\mu_0}{2}(1 + L_g + \zeta + \eta)\).  
Recalling that \(\mathcal{S}\mathcal{G}(\tilde{y}^k) \in  \nabla f(\tilde{x}^k)_{S_k} + \partial\tilde{g}_k(\tilde{y}^k)\), we have 
\begin{align*}
\|\mathcal{G}(\tilde{x}^k)_{S_k}\| \leq \|\mathcal{S}\mathcal{G}(\tilde{y}^k)\| \leq& \frac{L_C}{2}(1+\frac{\mu}{2})^2\|x^{k+1} - x^k\|^2 + \frac{c_1^{\theta}}{c^{\theta/2}}c_g\|x^{k+1} - x^k\|^{1 + \theta}\\
\leq & (\frac{L_C}{2}(1+\frac{\mu}{2})^2 + \frac{c_1^{\theta}}{c^{\theta/2}}c_g)\|x^{k+1} - x^k\|^{1 + \theta}, 
\end{align*}
where the first inequality follows from the fact that \(\|\mathcal{G}(x)\| \leq {\rm dist}(0, \nabla f(x) + \partial g(x))\)~\cite[Th. 3.5]{DL18} and block-coordinate separable of \(g\). 
Notice that  
\begin{align*}
 \|\mathcal{G}(x^{k+1})_{S_{k}} - \mathcal{G}(\tilde{x}^k)_{S_{k}}\|  \leq& \|x^{k+1}_{S_{k}} - \tilde{x}^k_{S_{k}}\| + \|x^{k+1}_{S_{k}} - \nabla f(x^{k+1})_{S_{k}} - \tilde{x}^k_{S_{k}} + \nabla f(\tilde{x}^{k})_{S_{k}}\| \\
    \leq& 2\|x^{k+1}_{S_{k}} - \tilde{x}^k_{S_{k}}\| + \|\nabla f(x^{k+1})_{S_{k}} - \nabla f(\tilde{x}^{k})_{S_{k}}\| \\
    \leq& (2 + L_g)\|r^k_{S_{k}}(x^{k+1})\| \overset{\eqref{eq:evare}, \eqref{eq:newqk}}{\leq} \frac{\mu}{2}(2 + L_g)\|\mathcal{G}(x^k)\|^{\theta}\|x^{k+1} - x^k\|\\
    \leq& \frac{\mu c_1^{\theta}}{2c^{\theta/2}}(2 + L_g)\|x^{k+1} - x^k\|^{1+\theta},
    \end{align*}
where the first inequality follows from the definition of \(\mathcal{G}_{S_{k}}(\cdot)\) and the nonexpansivity of \({\rm prox}_{g_{k}}\). Hence, we have 
\begin{align*}
\|\mathcal{G}(x^{k+1})_{S_{k}}\| \leq& \|\mathcal{G}(x^{k+1})_{S_{k}} - \mathcal{G}(\tilde{x}^k)_{S_{k}}\| + \|\mathcal{G}(\tilde{x}^k)_{S_{k}}\|  \leq \tilde{c}\|x^{k+1} - x^k\|^{1+\theta},
\end{align*}
where \(\tilde{c} = \frac{L_C}{2}(1+\frac{\mu}{2})^2 + \frac{c_1^{\theta}}{c^{\theta/2}}((\zeta + \eta)(1+\frac{\mu}{2}) + \frac{\mu}{2}(1 + L_g + \zeta + \eta)) + \frac{\mu c^{\theta}}{2c^{\theta/2}}(2 + L_g)\). 
Therefore, from Assumptions~\ref{assume:diffs} \textbf{S2} and~\ref{assume:errorbound}, we have 
\begin{align*}
\|\mathcal{G}(x^{k+1})\| \leq  c^{-\frac{1}{2}}\|\mathcal{G}(x^{k+1})_{S_{k}}\| \leq  c^{-\frac{1}{2}}\tilde{c}\|x^{k+1} - x^k\|^{1+\theta} \overset{\eqref{eq:dxkk2}}{\leq}  c^{-\frac{1}{2}}\tilde{c}\tilde{c}_6^{1+\theta}\|\mathcal{G}(x^k)\|^{q(1+\theta)}
\end{align*}
and 
\begin{align*}
{\rm dist}(x^{k+1}, \mathcal{S}^*) \leq& \kappa \|\mathcal{G}(x^{k+1})\|^q \leq \kappa c^{-1/2}\tilde{c}\|x^{k+1} \!- x^k\|^{q(1+\theta)} \\
\overset{\eqref{eq:dxkk1}}{\leq}& \kappa c^{-1/2}\tilde{c}c_6^{q(1 + \theta)}{\rm dist}^{q(1 + \theta)}(x^k, \mathcal{S}^*).
\end{align*}
Hence, we obtain~\eqref{eq:disxk1} and \(\{\|\mathcal{G}(x^k)\|\}\) converges to \(0\) with the convergence rate of order \(q(1+\theta)\).

From \(\lim_{k\to\infty}{\rm dist}(x^k, \mathcal{S}^*) = 0\) and~\eqref{eq:disxk1}, for any \(c_7\in(0, 1)\), there exist \(\epsilon_2\in(0, \epsilon_1)\) and  \(\tilde{k} \geq \hat{k}\), such that for all \(k \geq \tilde{k}\), if \(x^k \in \mathbb{B}(\bar{x}, \epsilon_2)\), then we have
\[
{\rm dist}(x^{k+1}, \mathcal{S}^*) \leq c_7{\rm dist}(x^k, \mathcal{S}^*). 
\]
Define \(\bar{\epsilon} = \min\{\frac{\epsilon_2}{2}, \frac{(1 - c_7)\epsilon_2}{2c_6}\}\). Next, we show that if \(x^{k_0} \in \mathbb{B}(\bar{x}, \bar{\epsilon})\) for some \(k_0 \geq \tilde{k}\), then \(x^{k + 1} \in \mathbb{B}(\bar{x}, \epsilon_2)\) for all \(k \geq k_0\) by induction. 

Notice that \(\bar{x} \in \omega(x^0)\), there exists \(k_0 \geq \tilde{k}\), such that \(x^{k_0} \in \mathbb{B}(\bar{x}, \bar{\epsilon})\). Therefore, 
\begin{align*}
\|x^{k_0+1} - \bar{x}\| \leq& \|x^{k_0} - \bar{x}\| + \|x^{k_0} - x^{k_0 + 1}\| \overset{\eqref{eq:dxkk1}}{\leq}  \|x^{k_0} - \bar{x}\| + c_6{\rm dist}(x^{k_0}, S^*) \leq (1 + c_6)\bar{\epsilon} \leq \epsilon_2,
\end{align*}
which implies \(x^{k_0+1} \in \mathbb{B}(\bar{x}, \epsilon_2)\). For any \(k > k_0\), suppose that for all \(k_0 \leq l \leq k-1\), we have \(x^{k+1} \in \mathbb{B}(\bar{x}, \epsilon_2)\). Moreover, 
\[
\|x^{k+1} \!-\! x^{k_0}\| \!\leq\!\! \sum_{l = k_0}^k\|x^{l+1} \!-\! x^l\| \!\leq\! c_6\!\!\!\sum_{l = k_0}^k{\rm dist}(x^l, S^*) \!\leq\! c_6\!\!\!\sum_{l = k_0}^kc_7^{l - k_0}{\rm dist}(x^{k_0}, \mathcal{S}^*)\!\leq\! \frac{c_6}{1 \!-\! c_7}\|x^{k_0} \!-\! \bar{x}\|. 
\]
Therefore, \(\|x^{k + 1} - \bar{x}\| \leq \|x^{k+1} - x^{k_0}\| + \|x^{k_0} - \bar{x}\| \leq (1 + \frac{c_6}{1 - c_7})\|x^{k_0} - \bar{x}\| \leq (1 + \frac{c_6}{1 - c_7})\bar{\epsilon} \leq \epsilon_2\). Hence, \(x^{k + 1} \in \mathbb{B}(\bar{x}, \epsilon_2)\). 

Notice that for any \(\epsilon > 0\), there exists \(\bar{\bar{k}} \geq k_0\), such that 
\[
{\rm dist}(x^k, \mathcal{S}^*) < \tilde{\epsilon}, \quad \forall k > \bar{\bar{k}},
\]
where \(\tilde{\epsilon} = \frac{1 - c_7}{c_6}\epsilon\). For any \(k_1, k_2 > \bar{\bar{k}}\), 
without loss of generality we assume \(k_1 > k_2\), the following inequality holds:
\begin{align*}
\|x^{k_1} - x^{k_2}\|\leq& \!\sum_{j=k_2}^{k_1 - 1}\|x^{j+1} - x^j\| \leq c_6\!\sum_{j=k_2}^{k_1 - 1}{\rm dist}(x^j, \mathcal{S}^*) \leq c_6\!\sum_{j=k_2}^{k_1 - 1}c_7^{j - k_2}{\rm dist}(x^{k_2}, \mathcal{S}^*)\\
\leq&  \frac{c_6}{1 - c_7}{\rm dist}(x^{k_2}, \mathcal{S}^*) <  \frac{c_6}{1 - c_7}\tilde{\epsilon} = \epsilon.
\end{align*}
Hence, \(\{x^k\}_{k\in\mathbb{N}}\) is a Cauchy sequence. Recall that the cluster point set \(\omega(x^0)\) of \(\{x^k\}_{k\in\mathbb{N}}\) is closed. We have \(\{x^k\}_{k\in\mathbb{N}}\) converges to some \(\bar{x}\in\omega(x^0)\). By setting \(k_2 = k + 1\) and passing the limit \(k_1\to\infty\), we have for any \(k > \bar{\bar{k}}\), 
\[
\|x^{k\!+\!1} - \bar{x}\| \leq \frac{c_6}{1 - c_7}{\rm dist}(x^{k+1}, \mathcal{S}^*)\leq \frac{c_6c_8}{1 - c_7}{\rm dist}^{q(1 + \theta)}(x^k, \mathcal{S}^*)\leq \frac{c_6c_8}{1 - c_7}\|x^k - \bar{x}\|^{q(1 + \theta)},
\]
where \(c_8 = \kappa c^{-1/2}\tilde{c}c_6^{q(1+\theta)}\). Therefore, \(\{x^k\}_{k\in\mathbb{N}}\) converges to \(\bar{x}\) with the rate of order \(q(1 + \theta)\). 
\end{proof}
By Theorem~\ref{th:lcrxk}, under the linear error bound (i.e., \(q = 1\)), \(\{\|\mathcal{G}(x^k)\|\}\) and \(\{x^k\}\) converge superlinearly for \(\theta \in (0, 1)\) and quadratically for \(\theta = 1\). It is clear that the conclusion remains valid when \(S_k \equiv [n]\). Namely, for convex composite optimization problems, IPNM  achieves superlinear and even quadratic convergence under the H\"olderian error bound condition. This result is consistent with the literature; see, e.g.,~\cite[Th. 4.1]{MYZZ22}.
\section{Numerical Experiments}\label{sec:numerical}

In this section, we evaluate the effectiveness and efficiency of Algorithm~\ref{alg:pnewton} on the \(\ell_1\)-regularized Student's \(t\)-regression, nonconvex binary classification with Geman-McClure loss function, and biweight loss with group regularization. All numerical experiments are implemented in MATLAB R2023b running on a computer with an Intel(R) Core(TM) i9-10885U CPU @ 2.40GHz \(\times\) 2.4 and 32GB of RAM.

\subsection{\(\ell_1\)-regularized Student's \(t\)-regression}\label{subsec:st}

We first consider the following \(\ell_1\)-regularized Student's \(t\)-regression~\cite{AFHL12} problem:  \begin{equation}\label{eq:st}
 \min_x\sum_{i=1}^m\log(1 + (Ax-b)_i^2/\nu) + \lambda \|x\|_1,
 \end{equation}
 where \(\nu > 0\) and \(\lambda > 0\) is the regularized parameter. Problem~\eqref{eq:st} is a special case of Problem~\eqref{eq:ncp} with \(f(x) := \sum_{i=1}^m\log(1 + (Ax-b)_i^2/\nu)\) and \(g(x): = \lambda \|x\|_1\). 
 In the following test, we generate the reference signal \(x^{\rm true}\in\mathbb{R}^n\) of length \(n\) with \(k = [n/40]\) nonzero entries, where the \(k\) different indices \(i\in\{1, \cdots, n\}\) of nonzero entries are randomly chosen and the magnitude of each nonzero entry is determined via \(x^{\rm true}_i = \eta_1(i)10^{20\eta_2(i)}\), \(\eta_1(i) \in\{-1, +1\}\) is a symmetric random sign and \(\eta_2(i)\) is uniformly distributed in \([0, 1]\).
The matrix \(A\in\mathbb{R}^{m\times n}\) takes \(m\) random cosine measurements, i.e., \(Ax^{\rm true} = ({\rm dct}(x^{\rm true}))_{J}\), where \(J \subset \{1, \cdots, m\}\) with \(\vert J\vert = n\) is randomly chosen and \({\rm dct}\) denotes the discrete cosine transform.  The measurement \(b\) is obtained by adding Student's t-noise with degree of freedom \(5\) and rescaled by \(0.1\) to \(Ax^{\rm true}\). 
We set \(\lambda = 0.1\|\nabla f(0)\|_{\infty}\) and \(\nu = 0.25\) in Problem~\eqref{eq:st}. 
Denote \(\psi(z) = \sum_{i=1}^m\log(1 + z_i^2/\nu)\). We have \(f(x) = \psi(Ax - b)\) and \(\nabla^2f(x) = A^\top\nabla^2\psi(Ax - b) A\). It noted that \(\nabla^2\psi(z)= {\rm Diag}(p)\) is a diagonal matrix, where \(p_i = \frac{2(\nu - z_i^2)}{\nu + z_i^2}\) and \(\|p\| \leq \frac{2}{\nu}\sqrt{m}\). For each \(x^k\), let \(p^k = {\rm diag}(\nabla^2\psi(Ax^k - b))\) and define \(\tilde{p}^k_i = p^k_i\) if \(p^k_i > 0\); otherwise, \(\tilde{p}^k_i = 10^{-5}\). We then set \(Q_k = A^\top {\rm Diag}(\tilde{p}^k)A\), and further specify \(\eta_k = \frac{1}{2}\times 10^{-2}\max\{\|\mathcal{G}(x^k)_{S_k}\|, 10^{-2}\}\) if \(\min\{\tilde{p}^k\} + \eta_k \geq \mu\); otherwise, \(\eta_k = \frac{1}{2}\times 10^{-2}\max\{\|\mathcal{G}(x^k)_{S_k}\|, 10^{-2}\} + \mu\), where \(\mu = \max\{0.05\|\mathcal{G}(x^k)_{S_k}\|, 10^{-2}\}\). For each \(k\in\mathbb{N}\), it can be verified that \(\|\nabla^2f(x^k) - Q_k\| \leq \|A\|^2\|\tilde{p}^k\| \leq (\frac{2}{\nu} + 10^{-5})\sqrt{m}\|A\|^2\) and \((Q_k)_{S_k} + (\eta_k - \mu)I_{\vert S_k\vert} \succeq 0\). Consequently, the conditions of Theorem~\ref{th:limitsppn} are satisfied under this setting. 
For each \(k\in\mathbb{N}\), we obtain the approximate solution \(\hat{y}^k\) by using the semismooth Newton (SSN) method~\cite{QS93,LST18}. Details are similar to that used in~\cite{Z24} so we omit it. 

We consider the following three sampling strategies: i). cyclic sampling with continuous indices (named as {\bf SBCPNM\_cycr}). The sampling order is randomly determined for each cycle. ii). cyclic sampling with random indices (named as {\bf SBCPNM\_cycrd}). iii). Top-\(\mathbf{k}\) sampling (named as {\bf SBCPNM\_topk}); We name the algorithm with \(S_k = [n]\) as {\bf IPNM}. In the following tests, we set \((m, n) = (2n, 2^{11})\). We stop Algorithm~\ref{alg:pnewton} when \(\|\mathcal{G}(x^k)\| \leq 10^{-4}\) and set \(\tau = 10^{-5}\) and \(\theta = 0.6\), respectively.  
Figure~\ref{fig:t_students} shows the norm of the residual mapping at iterates generated by each method along with running time and iteration, respectively. It can be seen that stochastic methods work well and outperform {\bf IPNM} in terms of running time. The iterations required by {\bf SBCPNM\_cycr} and {\bf SBCPNM\_cycrd} are similar to each other. When \(\mathbf{k}\) in Top-\(\mathbf{k}\) sampling equal to \(s\), {\bf SBCPNM\_topk} requires less number of iterations and performs faster than {\bf SBCPNM\_cycr} and {\bf SBCPNM\_cycrd}. The second column of Figure~\ref{fig:t_students} also illustrates that SBCPNM can achieve better convergence rate in terms of \(\|G(x^k)\|\) than sublinear when implemented. 
The last column of Figure~\ref{fig:t_students} displays the distance between iterates generated by each method and \(\bar{x}\), where \(\bar{x}\) is the value returned by IPNM. Superlinear convergence rate of {\bf SBCPNM\_topk} can be observed. 

\begin{figure}[h!]
\centering
\includegraphics[width = 0.325\textwidth]{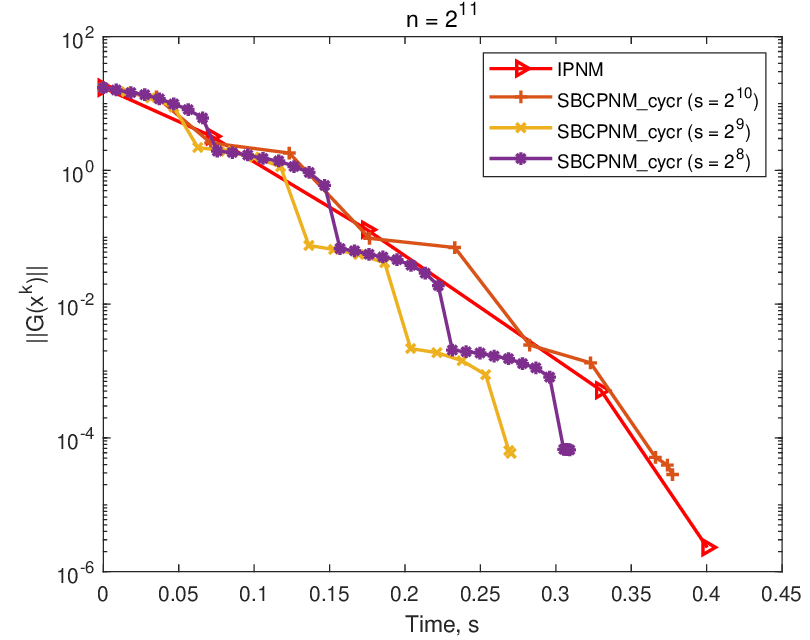}
\includegraphics[width = 0.325\textwidth]{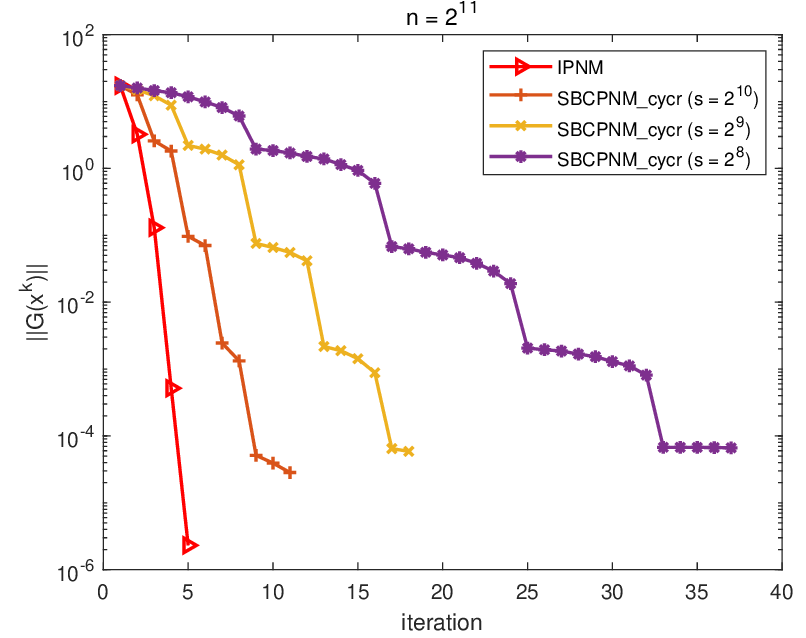}
\includegraphics[width = 0.325\textwidth]{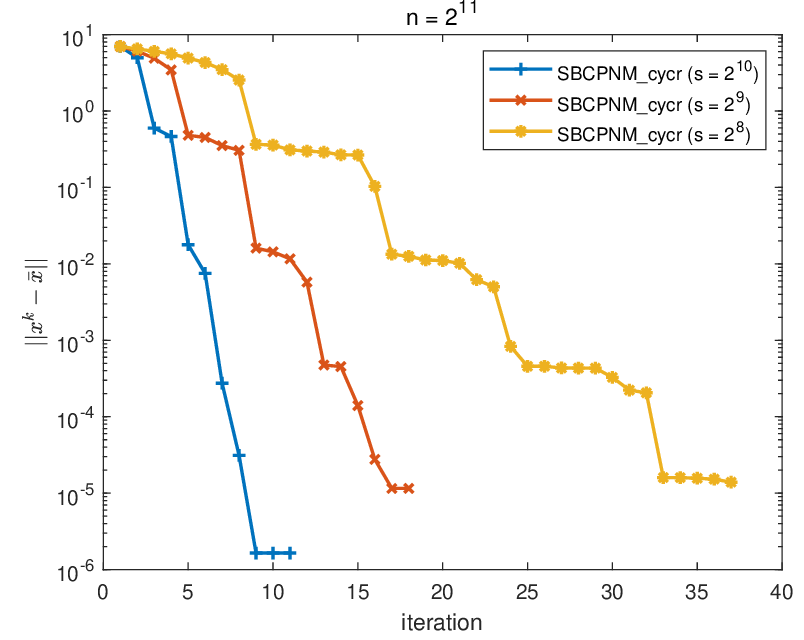}\\
\includegraphics[width = 0.325\textwidth]{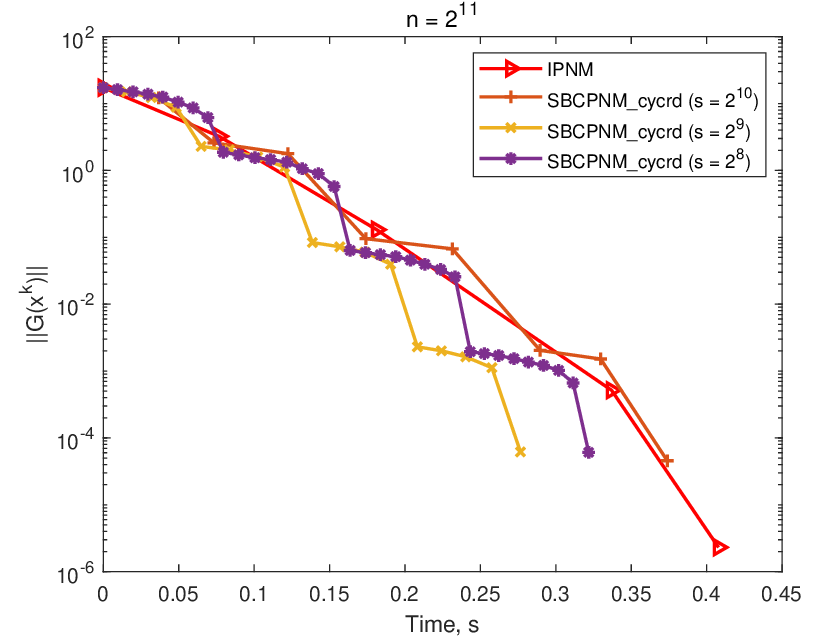}
\includegraphics[width = 0.325\textwidth]{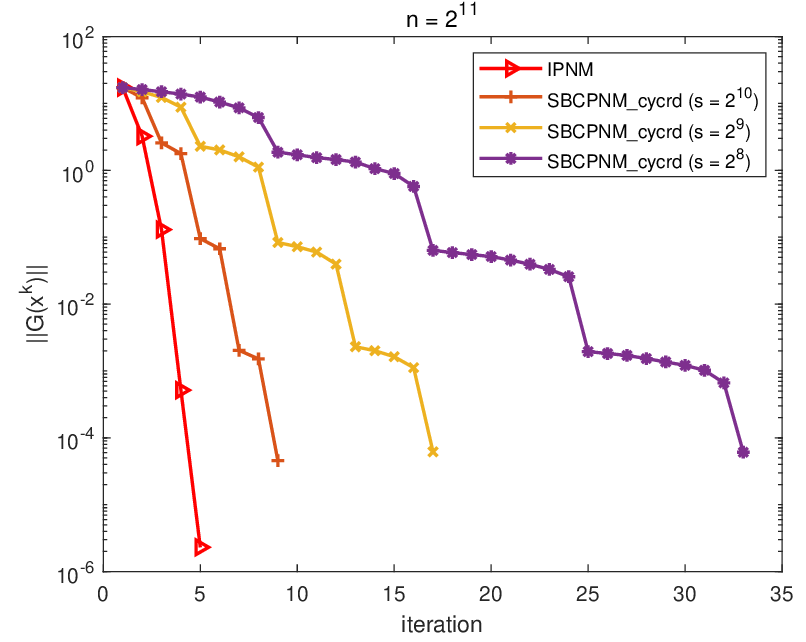}
\includegraphics[width = 0.325\textwidth]{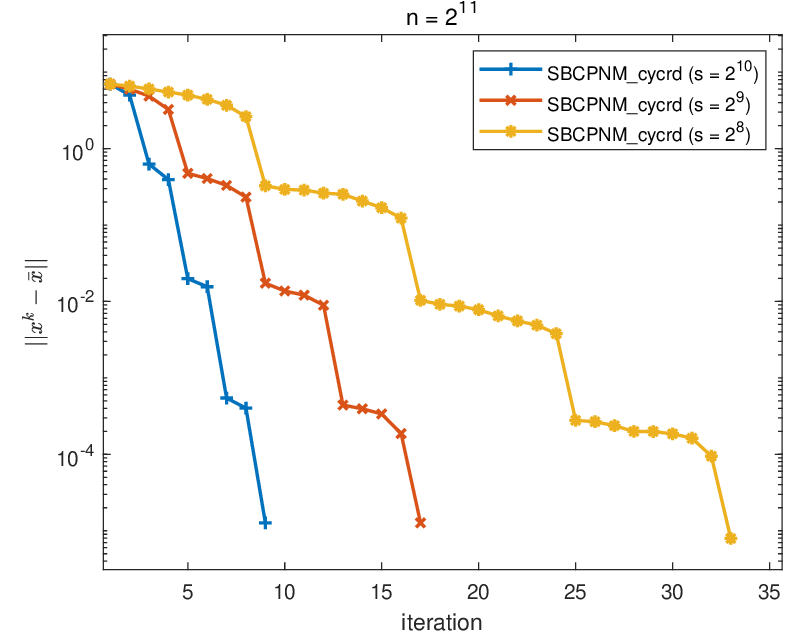}\\
\includegraphics[width = 0.325\textwidth]{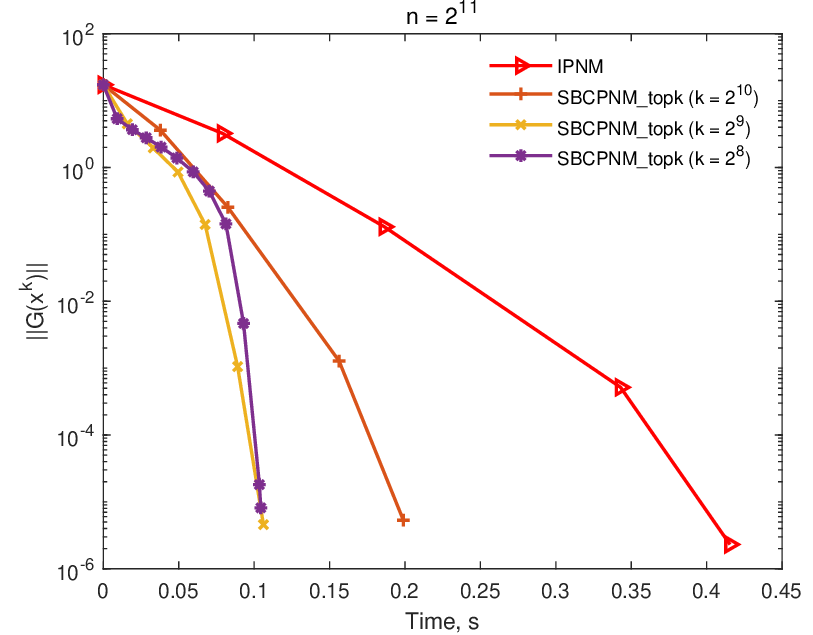}
\includegraphics[width = 0.325\textwidth]{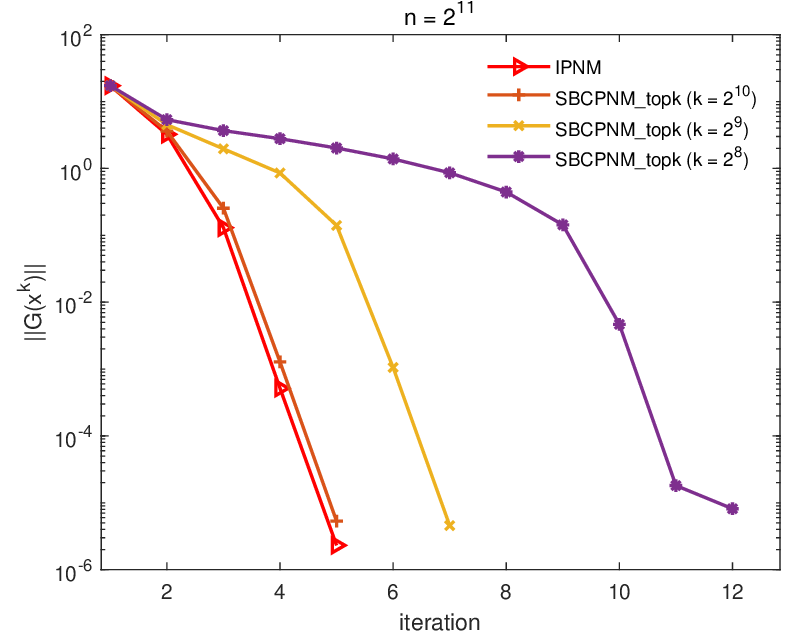}
\includegraphics[width = 0.325\textwidth]{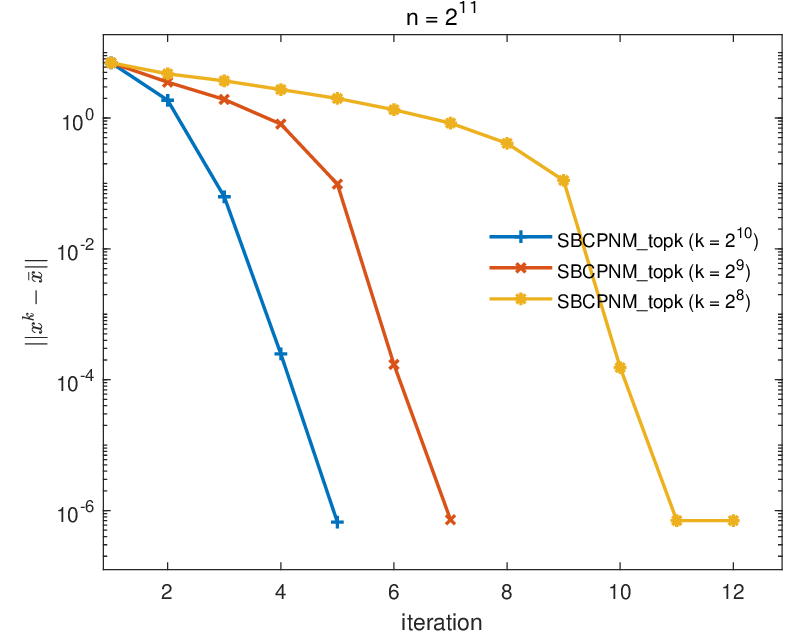}\\
\includegraphics[width = 0.325\textwidth]{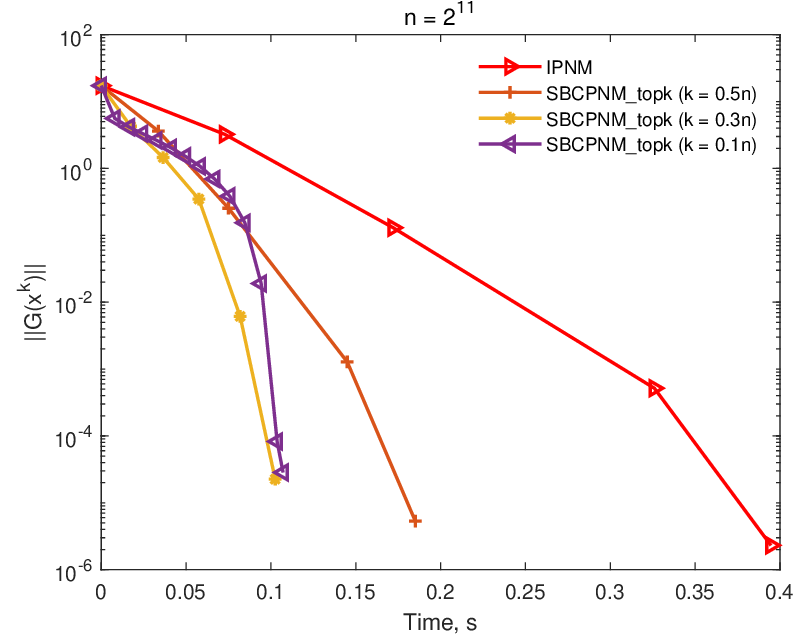}
\includegraphics[width = 0.325\textwidth]{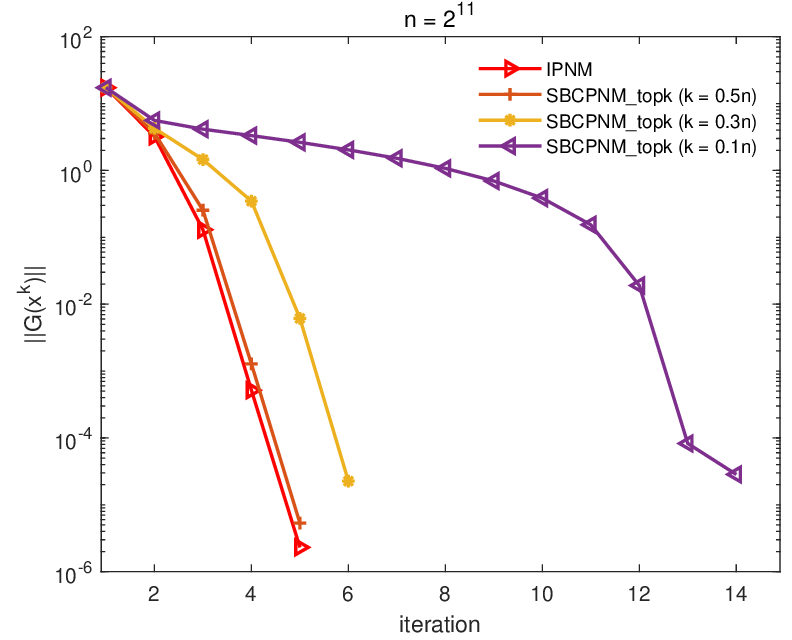}
\includegraphics[width = 0.325\textwidth]{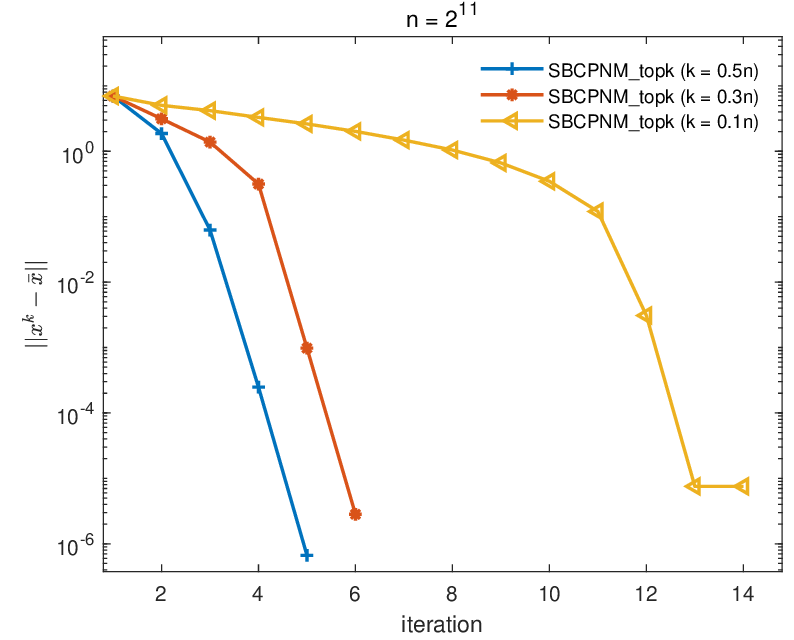}
\caption{Average performance of SBCPNM under different samplings over \(10\) trials.  Top line: {\bf SBCPNM\_cycr}; the second line: {\bf SBCPNM\_cycrd}; Bottom two lines: {\bf SBCPNM\_topk}.}\label{fig:t_students}
\end{figure}

\subsection{Nonconvex binary classification}

We study the following nonconvex binary classification problem:
\begin{equation}\label{eq:bc}
 \min_x f(x):=\frac{1}{m}\sum_{j=1}^m\ell(y_j - z_j^\top x) + \lambda \|x\|^2,
 \end{equation}
 where \(\ell(t) = \frac{2t^2}{t^2 + 4}\) is the Geman-McClure loss function, \(\lambda > 0\) is the regularized parameter and is fixed to \(0.001\) in the following tests, \(y_j\in \{0, 1\}\) is commonly referred to as class labels, and \(z_j\) satisfies \(\|z_j\| = 1\) is commonly referred to as features, \(j\in[m]\). Problem~\eqref{eq:bc} is a special case of Problem~\eqref{eq:ncp} with \(g(x) \equiv 0\). Notice that in this case, \(\arg\min_y\{q^k_{S_k}(y)\}\) is the unique solution of equation 
 \[
 \nabla f(x^k)_{S_k} + \left((Q_k)_{S_k} + \eta_k I\right)(y - y^k) = 0
 \]
 since \((Q_k)_{S_k} + \eta_k I \succeq 0\). We find the approximate solution \(\hat{y}^k\) satisfies~\(\|\nabla f(x^k)_{S_k} + \left((Q_k)_{S_k} + \eta_k I\right)(y - y^k)\|\leq \frac{\mu}{2}\|\hat{y}^k - y^k\|\) by using conjugate gradient (CG) method~\cite{NW06}. Notice that \(\nabla^2 f(x) = \frac{1}{m}\sum_{j=1}^m\ell^{''}(y_j - z_j^\top x)z_jz_j^\top + 2\lambda I = ZD(x)Z^\top + 2\lambda I\), where \(Z = [z_1, \cdots, z_m]\in\mathbb{R}^{n\times m}\), \(D(x) = {\rm Diag}(d_1, \ldots, d_m)\), and \(d_j = \frac{1}{m}\ell^{''}(y_j - z_j^\top x)\), \(j \in[m]\). We choose \(Q_k := \nabla^2f(x^k)\) and set \(\eta_k = 1.01\times\max\{-(2\lambda + \min_{1\leq j\leq m}\{d_j\}), \mu\}\) for each \(k\in\mathbb{N}\), where \(\mu = 10^{-5}\). 
 
 We consider random sampling (each iteration randomly samples \(s\) indicators, named as {\bf SBCPNM\_r}) and Top-\({\bf k}\) sampling in this test.  We stop {\bf SBCPNM\_r} and {\bf SBCPNM\_topk} when \(\|\nabla f(x^k)\| \leq 10^{-8}\) and set \(\tau = 10^{-5}\) and \(\theta = 0.6\), respectively. 
 We test on real data sets, including rcv1, and real-sim. The datasets can be downloaded from~\url{https://www.csie.ntu.edu.tw/~cjlin/libsvmtools/datasets/}.  
 We select a subsets from data rcv1 and real-sim and name them as rcv1\_sel and real\_sim\_sel, respectively. The size of rcv1\_sel and real\_sim\_sel is \([m, n] = [240, 47236]\) and \([m, n] = [180, 20958]\), respectively. Figures~\ref{fig:rev1} and~\ref{fig:real_sim} display the norm of \(\nabla f(x)\) at iterates generated by each method along with running time and iteration, respectively. It can be seen from Figure~\ref{fig:rev1} that when \(\mathbf{k} = 28000\), {\bf SBCPNM\_topk} outperforms {\bf SBCPNM\_r} and IPNM in terms of running time and the number of iterations. 
 Similar results can be observed from Figure~\ref{fig:real_sim} for \(\mathbf{k} = 16000\) in Top-\(\mathbf{k}\) sampling. It can be seen that {\bf SBCPNM\_r} and {\bf SBCPNM\_topk} can achieve better convergence rate in terms of \(\|\nabla f(x^k)\|\) than sublinear when implemented. The last column of Figure~\ref{fig:t_students} displays the distance between iterates generated by each method and \(\bar{x}\), where \(\bar{x}\) is the value returned by IPNM. Superlinear convergence rate of {\bf SBCPNM\_topk} can be observed. 
 At the bottom line of Figures~\ref{fig:rev1} and~\ref{fig:real_sim}, we also display the results obtained by {\bf SBCPNM\_topk} for lager size of selected data. It can be seen that, for the appropriate value of \(\mathbf{k}\), {\bf SBCPNM\_topk} exhibits an advantage in terms of running time. 

\begin{figure}[h!]
\centering
\includegraphics[width = 0.325\textwidth]{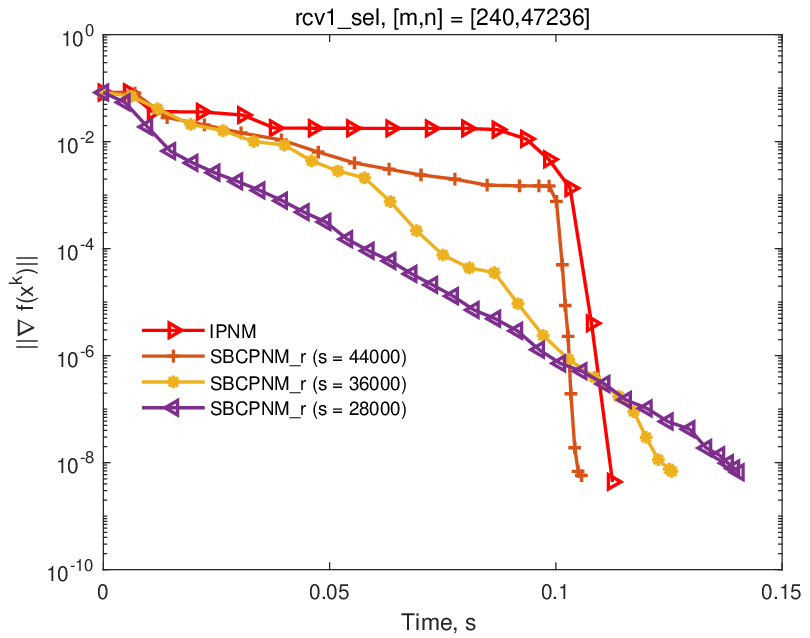}
\includegraphics[width = 0.325\textwidth]{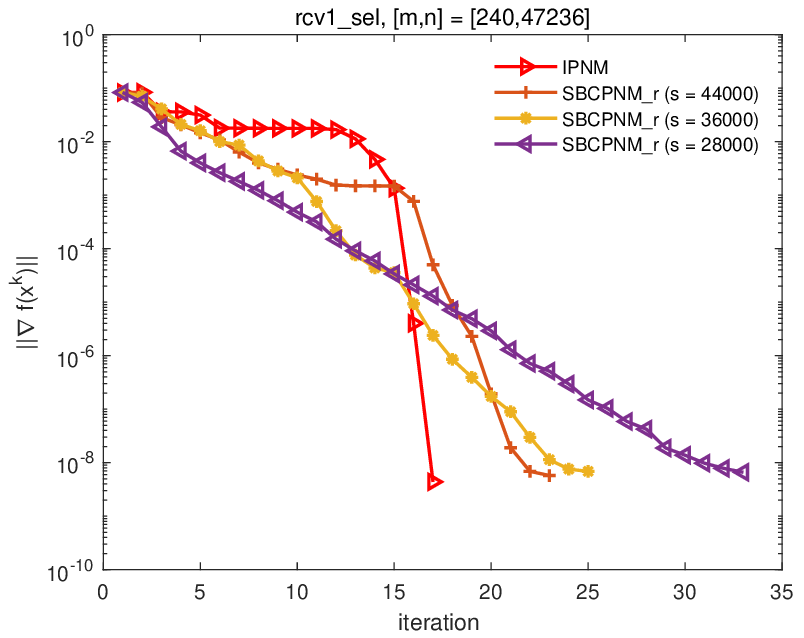}
\includegraphics[width = 0.325\textwidth]{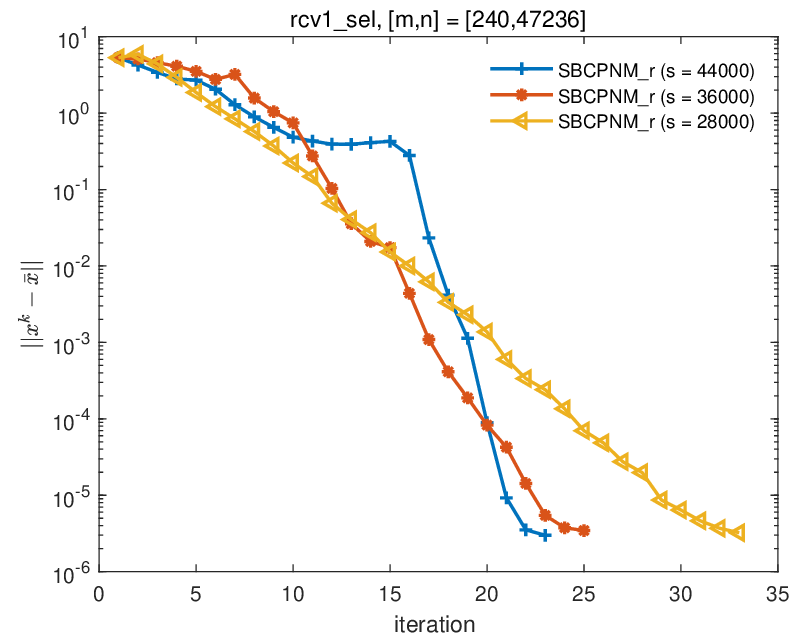}\\
\includegraphics[width = 0.325\textwidth]{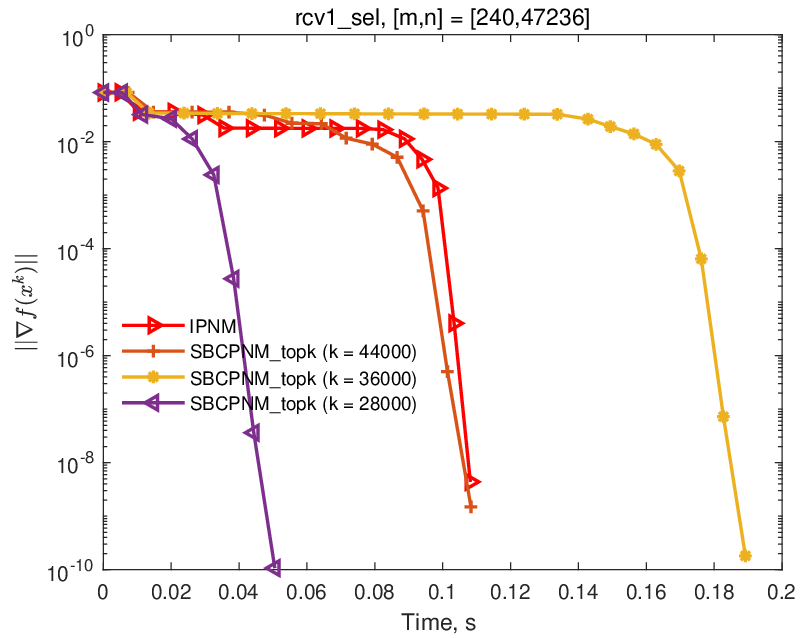}
\includegraphics[width = 0.325\textwidth]{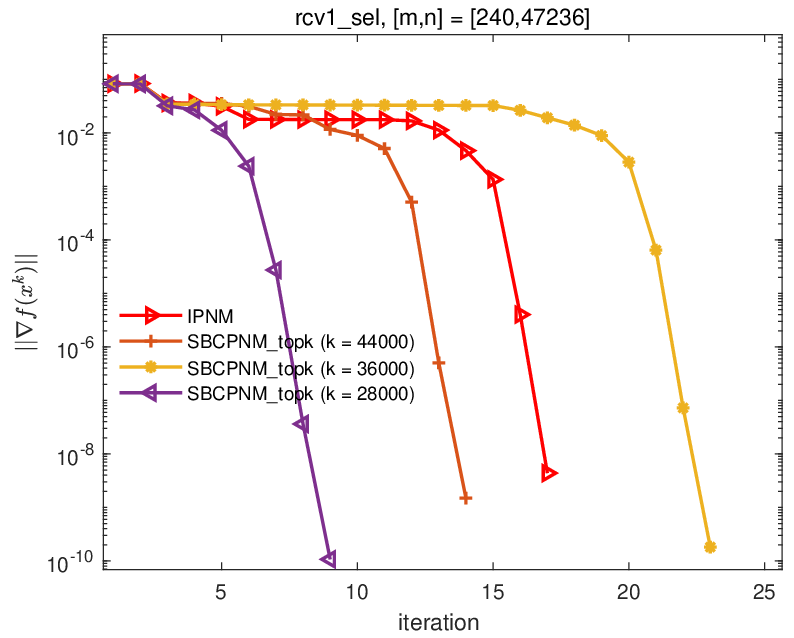}
\includegraphics[width = 0.325\textwidth]{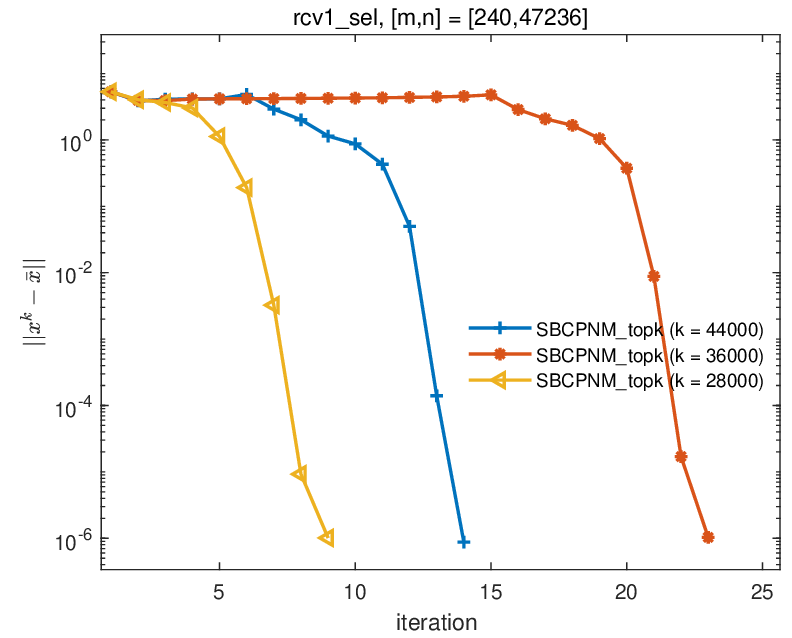}\\
\includegraphics[width = 0.325\textwidth]{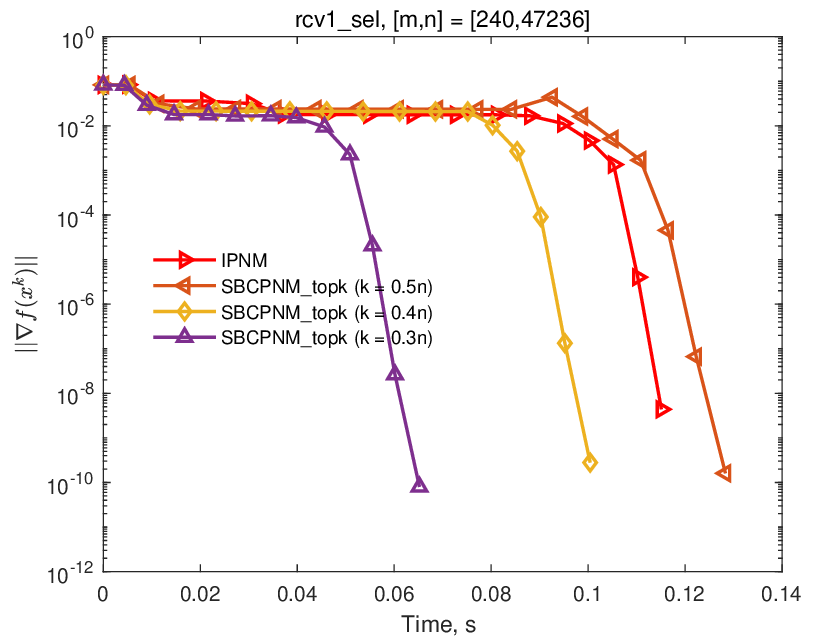}
\includegraphics[width = 0.325\textwidth]{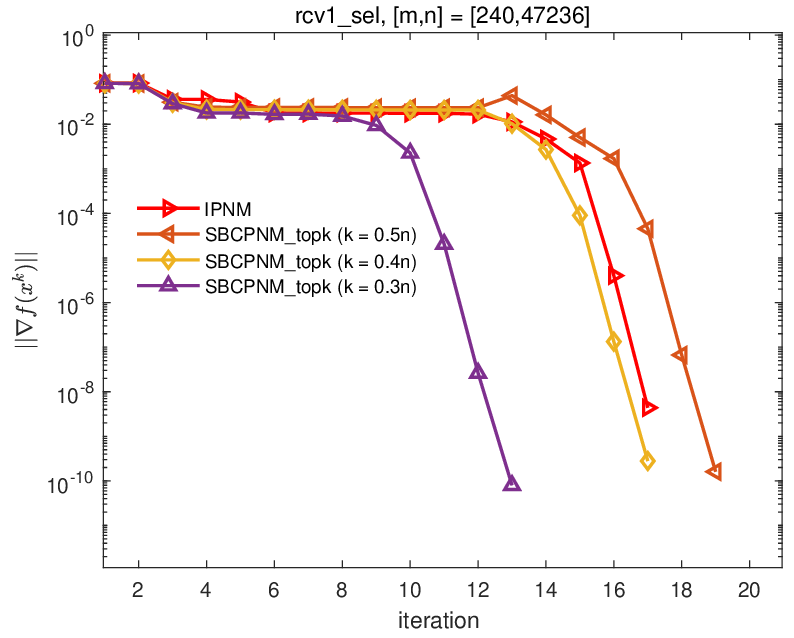}
\includegraphics[width = 0.325\textwidth]{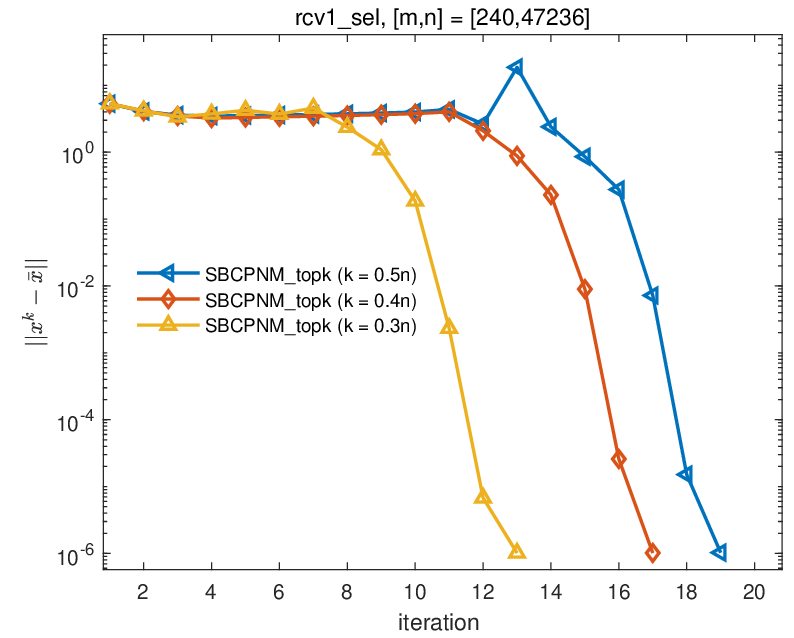}\\
\includegraphics[width = 0.325\textwidth]{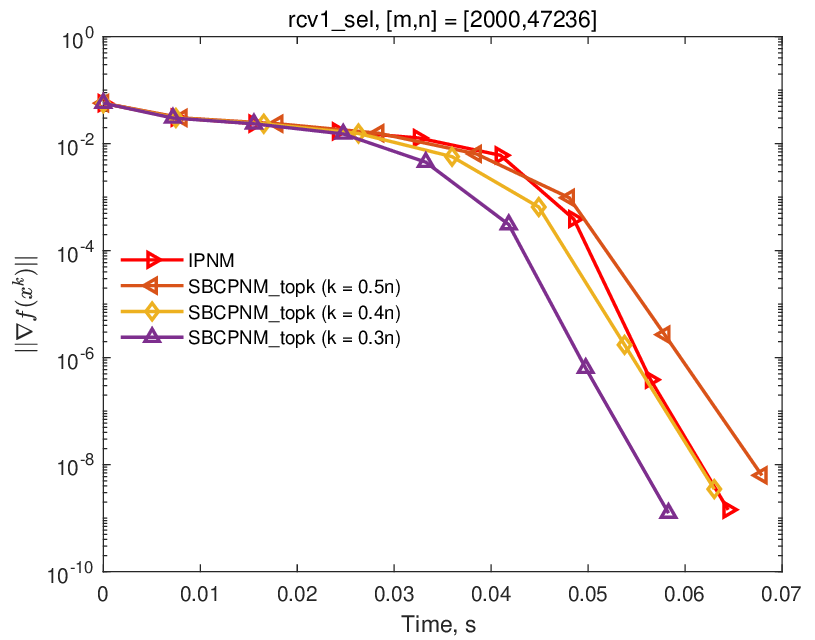}
\includegraphics[width = 0.325\textwidth]{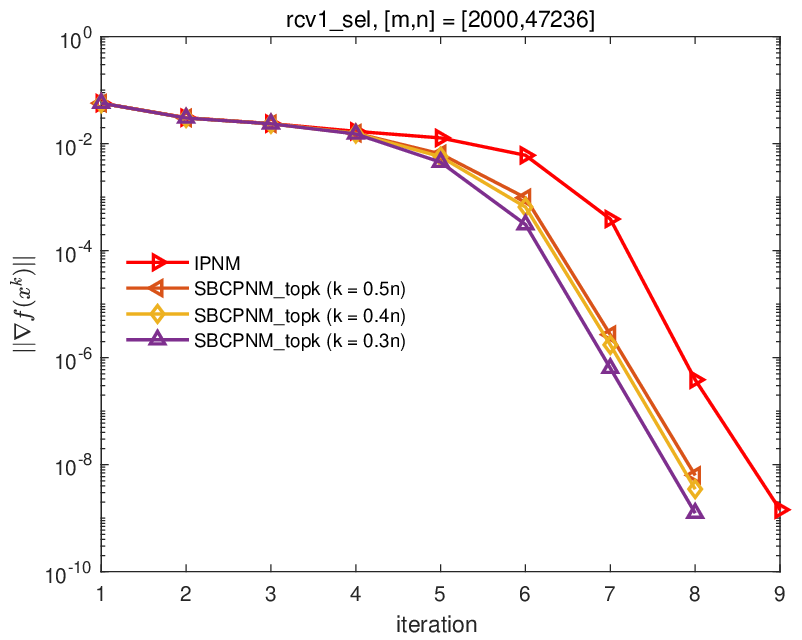}
\includegraphics[width = 0.325\textwidth]{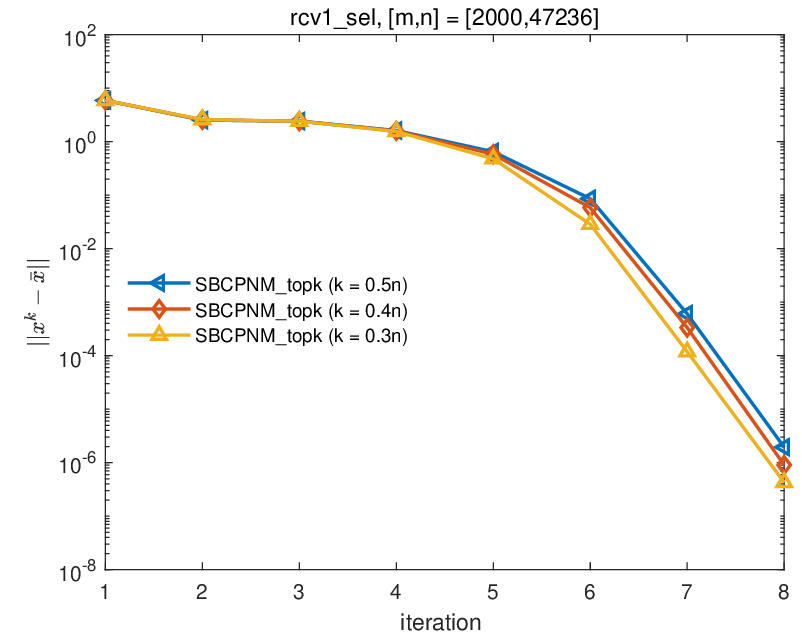}
\caption{Average performance of SBCPNM under different samplings over \(10\) trials on datasets rcv1\_sel. The top line: {\bf SBCPNM\_r}; the second line: {\bf SBCPNM\_{topk}}; the third line: {\bf SBCPNM\_{topk}} with \(\mathbf{k} = \{50\%n, 40\%n, 30\%n\}\) for selected data with \(m = 240\); the bottom line: {\bf SBCPNM\_{topk}} for selected data with \(m = 2000\).}\label{fig:rev1}
\end{figure}

\begin{figure}[h!]
\centering
\includegraphics[width = 0.325\textwidth]{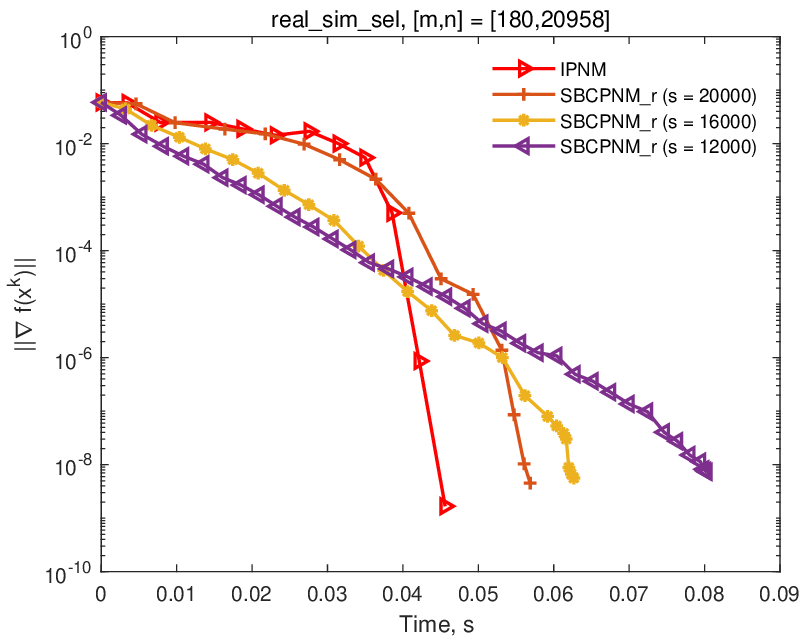}
\includegraphics[width = 0.325\textwidth]{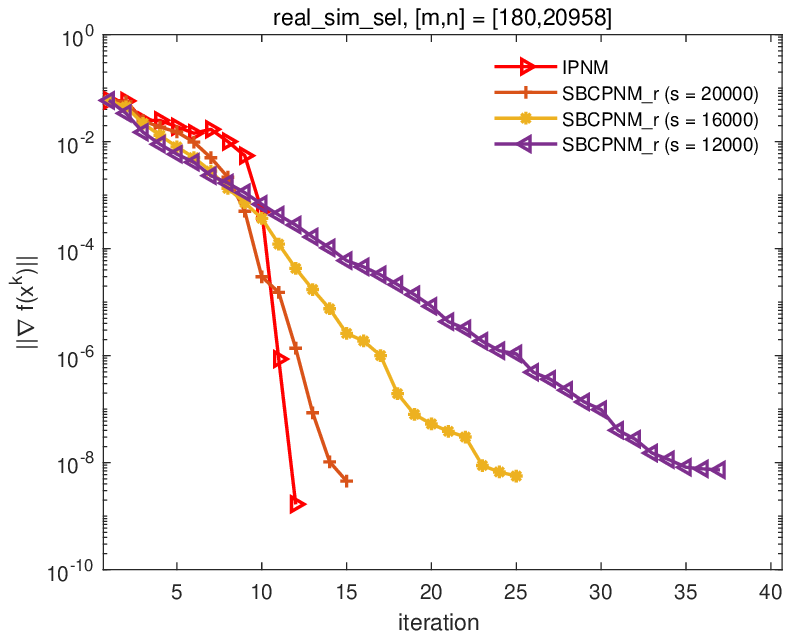}
\includegraphics[width = 0.325\textwidth]{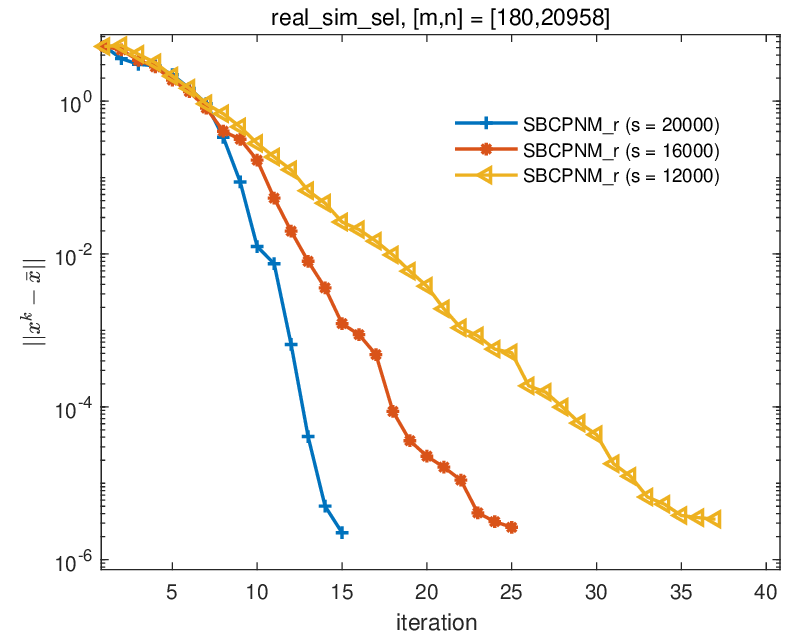}\\
\includegraphics[width = 0.325\textwidth]{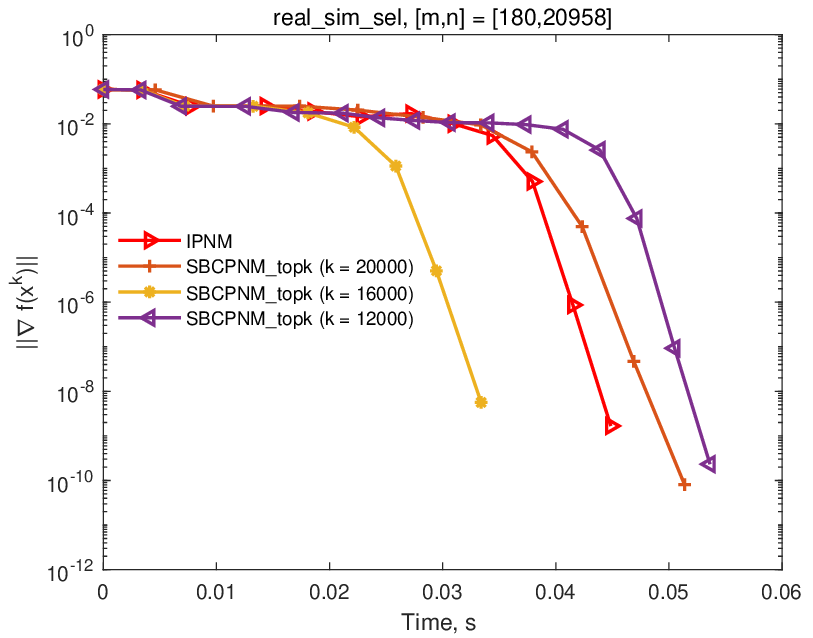}
\includegraphics[width = 0.325\textwidth]{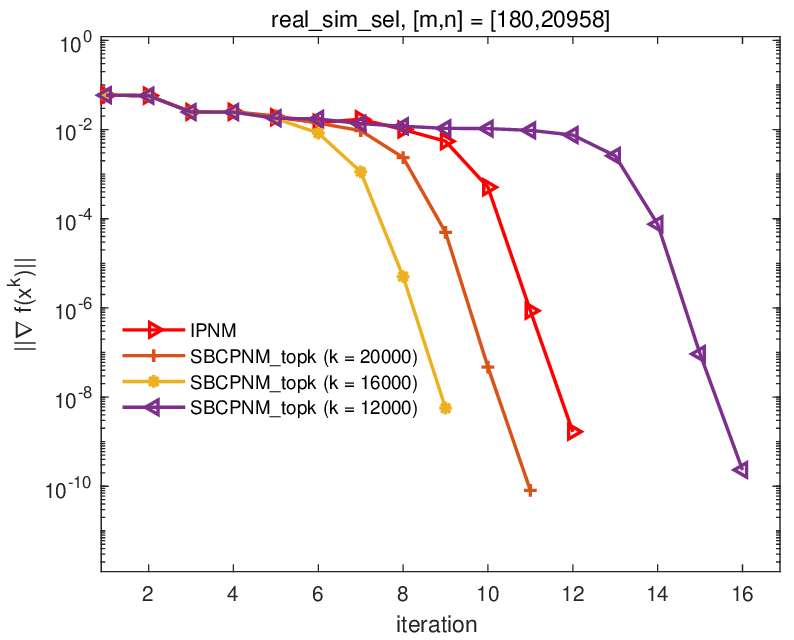}
\includegraphics[width = 0.325\textwidth]{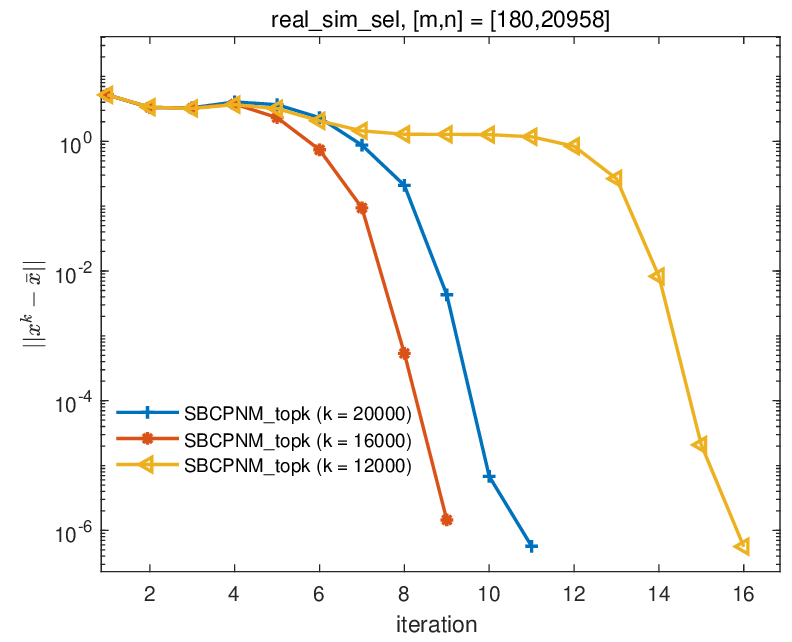}\\
\includegraphics[width = 0.325\textwidth]{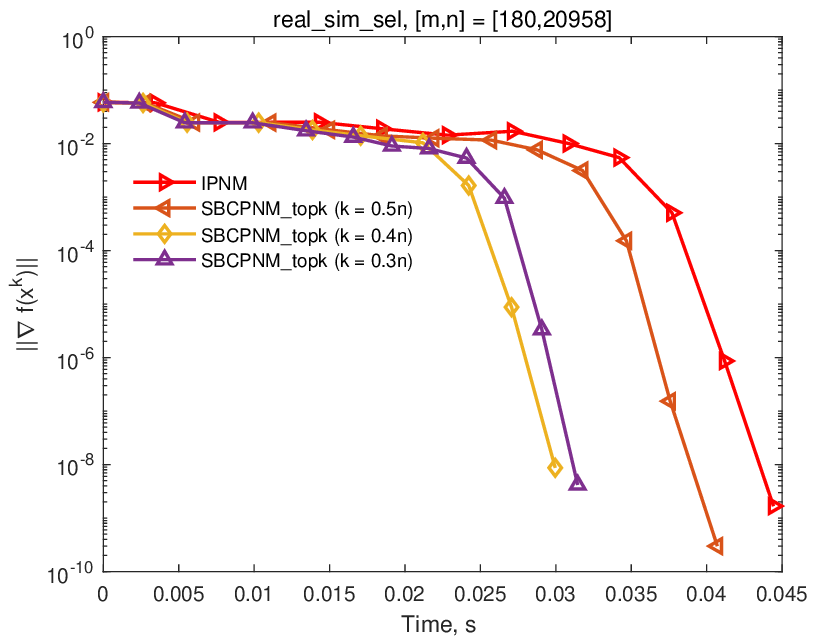}
\includegraphics[width = 0.325\textwidth]{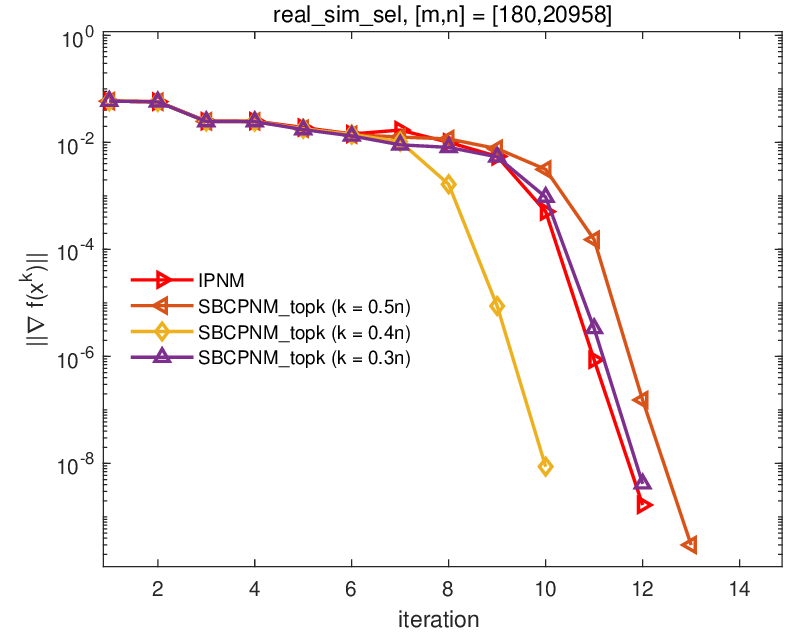}
\includegraphics[width = 0.325\textwidth]{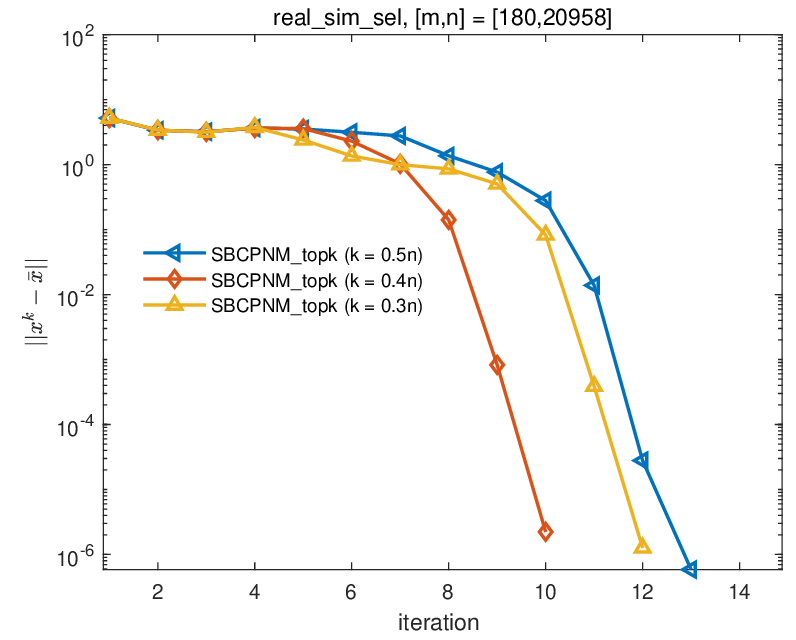}\\
\includegraphics[width = 0.325\textwidth]{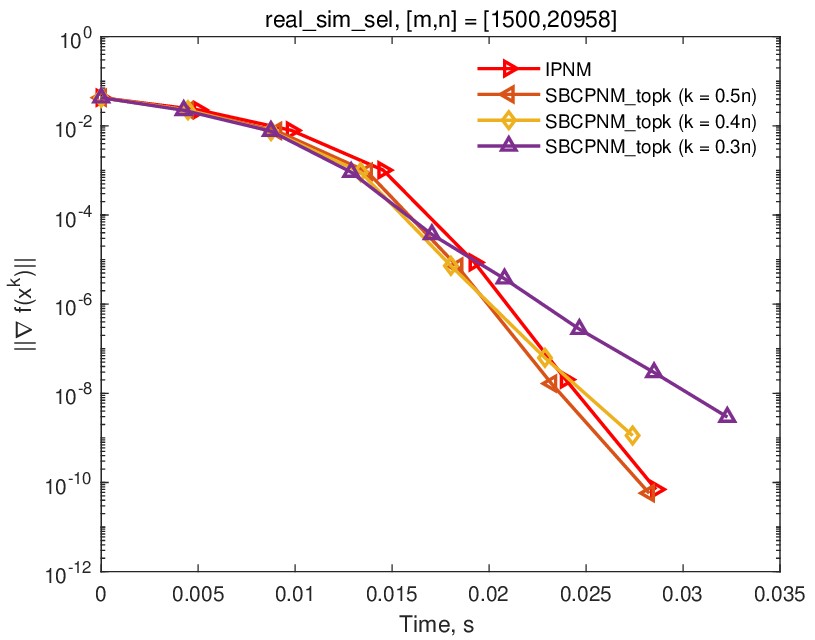}
\includegraphics[width = 0.325\textwidth]{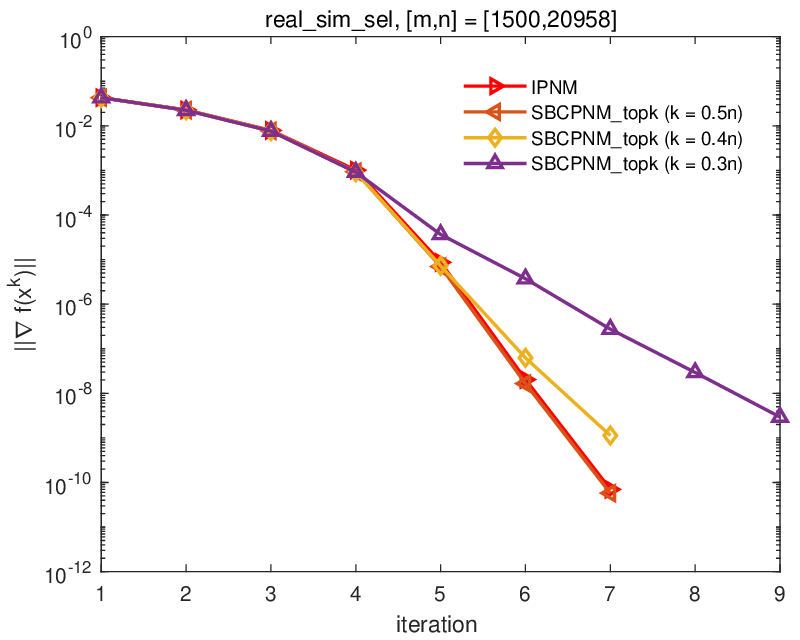}
\includegraphics[width = 0.325\textwidth]{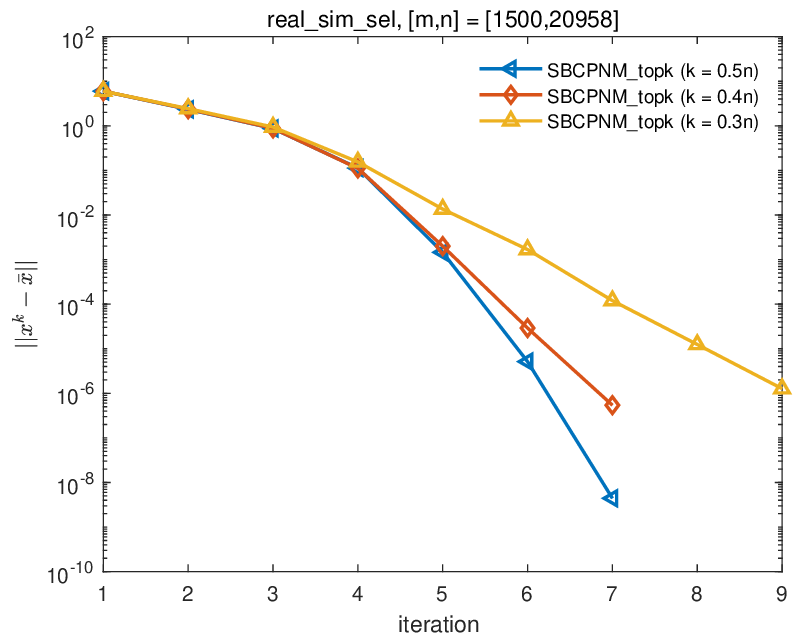}
\caption{Average performance of SBCPNM under different samplings over \(10\) trials on datasets real\_sim\_sel. The top line: {\bf SBCPNM\_r}; the second line: {\bf SBCPNM\_{topk}}; the third line: {\bf SBCPNM\_{topk}} with \(\mathbf{k} = \{50\%n, 40\%n, 30\%n\}\) for selected data with \(m = 180\); the bottom line: {\bf SBCPNM\_{topk}} for selected data with \(m = 1500\).}\label{fig:real_sim}
\end{figure}

\subsection{Biweight loss with group regularization}

We study the following nonconvex problem:
\begin{equation}\label{eq:bg}
 \min_x \frac{1}{m}\sum_{j=1}^m\phi(a_j^\top x - b_j) + \lambda \sum_{i=1}^{\lceil n/5\rceil}\sqrt{\sum_{t=1}^{\min\{5, n-5(p-1)\}}x_{5(p-1)+j}^2},
 \end{equation}
 where \(\phi(t) = \frac{t^2}{t^2 + 1}\), \(\lambda > 0\) is the regularized parameter and is fixed to \(0.001\) in the following tests, \(b_j\in \{-1, 1\}\) is commonly referred to as class labels, and \(a_j\) satisfies \(\|a_j\| = 1\) is commonly referred to as features, \(j\in[m]\). We can denote \(f(x) := \frac{1}{m}\sum_{j=1}^m\phi(a_j^\top x - b_j)\) and \(g(x): = \lambda \sum_{i=1}^{\lceil n/5\rceil}\sqrt{\sum_{t=1}^{\min\{5, n-5(p-1)\}}x_{5(p-1)+j}^2}\) for Problem~\eqref{eq:bg}. Each set of five consecutive coordinates is grouped into a single block. 

Notice that \(\nabla^2 f(x) = \frac{1}{m}\sum_{j=1}^m\phi^{''}(a_j^\top x - b_j)a_ja_j^\top = AD(x)A^\top\), where \(A = [a_1, \cdots, a_m]\in\mathbb{R}^{n\times m}\), \(D(x) = {\rm Diag}(d_1(x), \ldots, d_m(x))\), and \(d_j(x) = \frac{1}{m}\phi^{''}(a_j^\top x - b_j)\), \(j \in[m]\). We choose \(Q_k := A\widetilde{D}_kA^\top\) where \(\widetilde{D}_k = {\rm Diag}(\tilde{d}_1, \ldots, \tilde{d}_m)\) with \(\tilde{d}_j = \max\{\frac{1}{m}\phi^{''}(a_j^\top x^k - b_j), 10^8\}\), \(\eta_k = 0.01\mu\) if \(\min_j\{\tilde{d}_j\} + 0.01\mu \geq \mu\), and \(\eta_k = 1.01\mu\), otherwise, and \(\mu = 10^{-3}\). Similar to Problem~\eqref{eq:st} in subsection~\ref{subsec:st}, the approximate solution \(\hat{y}^k\) can be obtained by using the SSN method. 

We consider the cyclic sampling with \(\vert S_k\vert \equiv 5\). We compare Algorithm~\ref{alg:pnewton} with the inexact variable metric stochastic block-coordinate descent method (named as {\bf VM}) proposed in~\cite{LW20}. As in~\cite{LW20}, we solve each subproblem of {\bf VM} by using \(10\) SpaRSA~\cite{WNF09} iterations. Notice that we do not need to update the blocks satisfy \(x^k_{S_k} = 0\) and \(-\frac{1}{\lambda}\nabla f(x^k)_{S_k}\in \partial \|x\|\big\vert_{x^{k}_{S_k}}\). Figure~\ref{fig:group} displays the performance of {\bf SBCPNM} and {\bf VM} in terms of \(\|\mathcal{G}(x^k)\|\) and \(\|x^k - \bar{x}\|\), where \(\bar{x}\) is calculated by using IPNM. It can be seen that both {\bf BCPNM} and {\bf VM} can achieve better convergence rate in terms of \(\|\mathcal{G}(x^k)\|\) than sublinear when implemented. Algorithms {\bf BCPNM} and {\bf VM} follow the same change trend, but the running time and the number of iterations are different due to the different methods are used to solve subproblems (SSN vs SpaRSA). Both {\bf BCPNM} and {\bf VM} exhibit superlinear convergence.

\begin{figure}[h!]
\centering
\includegraphics[width = 0.325\textwidth]{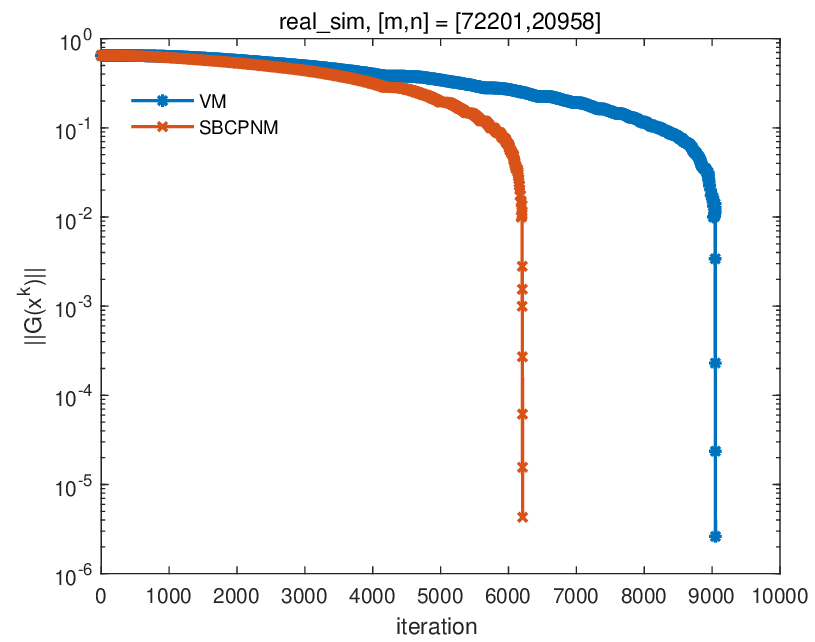}
\includegraphics[width = 0.325\textwidth]{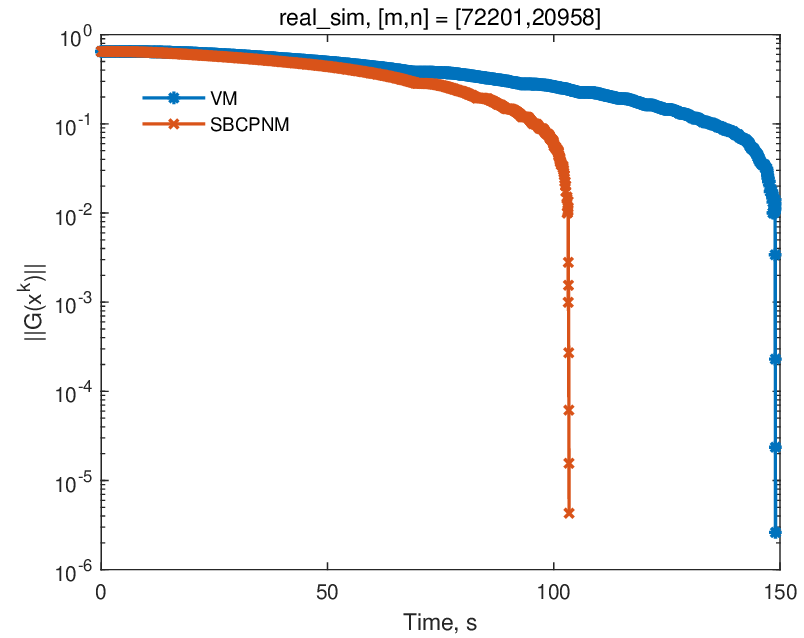}
\includegraphics[width = 0.325\textwidth]{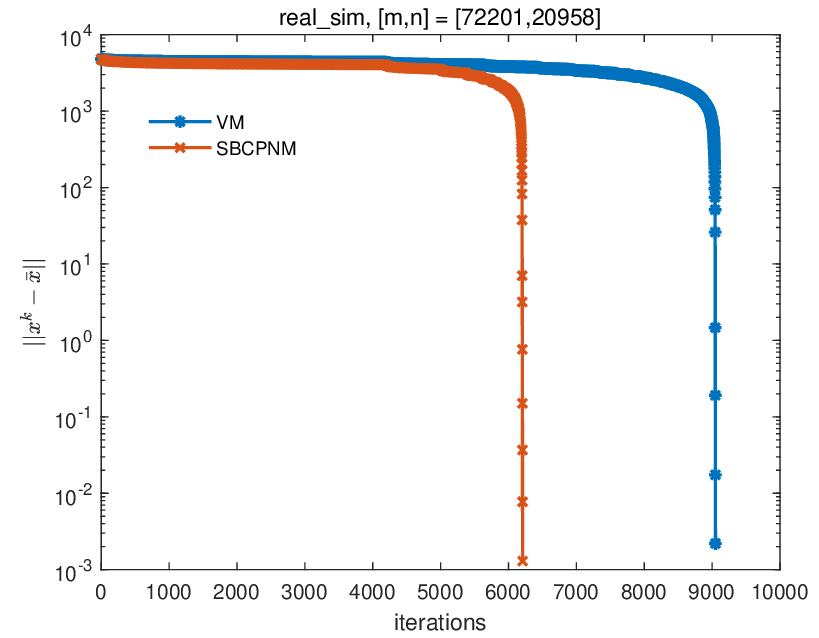}
\caption{Performance of {\bf SBCPNM} and {\bf VM} on dataset real\_sim.}\label{fig:group}
\end{figure}

\section{Conclusions}\label{sec:conclusions}

In this paper, we propose a stochastic block-coordinate proximal Newton method for minimizing the sum of a smooth (possibly nonconvex) function and a separable convex (possibly nonsmooth) function.
We establish the global convergence rate of the method under different assumptions on the sampling.
Under a suitable sampling assumption, we show that the unit step-size variant of the algorithm achieves a superlinear convergence rate for nonconvex composite optimization problems, and superlinear/quadratic convergence rates for convex composite optimization problems under difference error bound conditions. These results are consistent with those of the inexact proximal Newton method in the literature. 
Our experiments demonstrated that stochastic strategies are effective when \(n\) is large, and the algorithm demonstrates a convergence rate that is superior to sublinear in terms of the norm of residual mapping and the superlinear convergence rate in terms of iterates.

\bmhead{Acknowledgements}
This work was supported by NSFC (No. 12271217).

\section*{Declarations}

The data that support the findings of this study are available from the author upon reasonable request. The authors declare that they have no conflict of interest. 



\begin{appendices}

\section{Proof of Theorem~\ref{th:limitspwithoutls}}\label{app:proofsbcpnusz}

\begin{proof}
(a) The proof of statement (a) is similar to the proof of Lemma~\ref{lem:ngk}.

(b) Notice that Lemma~\ref{lemma:dqkng} still holds and we have
\begin{align*}
\varphi(x^k) \geq& q_k(x^{k+1})  \nonumber\\
=& \varphi(x^{k\!+\!1}) \!-\! (f(x^{k\!+\!1}) \!-\! f(x^k) \!-\! \langle\nabla f(x^k), x^{k\!+\!1}  \!-\! x^k\rangle) \!+\!  \frac{1}{2}\langle Q_k(x^{k\!+\!1} \!-\! x^k), x^{k\!+\!1} \!-\! x^k\rangle  \nonumber \\
&+ \frac{\eta_k }{2}\|x^{k+1}  - x^k\|^2 \nonumber \\
\geq& \varphi(x^{k+1}) \!-\! \frac{L_{S_k}}{2}\|x^{k+1}  \!-\! x^k\|^2 \!+\!  \frac{1}{2}\langle Q_k(x^{k+1} \!-\! x^k), x^{k+1} \!-\! x^k\rangle \!+\! \frac{\eta_k }{2}\|x^{k+1} \!-\! x^k\|^2 \nonumber\\
\geq & \varphi(x^{k+1}) + \frac{\mu}{2}\|x^{k+1}  - x^k\|^2,
\end{align*}
where the last inequality follows from \(Q_k + (\eta_k - L_{S_k} - \mu)I_n\succeq 0\). 

(c)-(d) The proofs  are similar to those of Theorem~\ref{th:cfv} (i) and (ii), respectively.

(e)-(f) The proofs  are similar to those of Theorems ~\ref{th:clusterpoint} (i)-\ref{th:limitsppn} (i) and Theorems~\ref{th:clusterpoint} (ii)-\ref{th:limitsppn} (ii), respectively.

\end{proof}

\end{appendices}


\bibliography{Manuscript_localconvex_new}

\end{document}